\newcommand{\N}{\mathbb{N}} 
\newcommand{\Z}{\mathbb{Z}} 
\newcommand{\Q}{\mathbb{Q}}
\newcommand{\id}{\operatorname{id}}
\newcommand{\fin}{\operatorname{fin}}
\newcommand{\mc}[1]{\mathcal{#1}}
\newcommand{\edge}[3]{{\displaystyle#1\operatornamewithlimits{---}^{#2}#3}}
\newcommand{\halfedge}[2]{{\displaystyle#1\operatornamewithlimits{---}^{#2}}}
\def\XXint#1#2#3{{\setbox0=\hbox{$#1{#2#3}{\int}$}
\vcenter{\hbox{$#2#3$}}\kern-.5\wd0}}
\newtheoremstyle{custom-definition}
    {3pt} 
    {3pt} 
    {\addtolength{\@totalleftmargin}{0em}
     \addtolength{\linewidth}{-0em}
     \parshape 1 0em \linewidth} 
    {} 
    {\bfseries} 
    {.} 
    {.5em} 
    {} 
\newtheoremstyle{custom-remark}
    {3pt} 
    {3pt} 
    {\addtolength{\@totalleftmargin}{0.75em}
     \addtolength{\linewidth}{-0.25em}
     \parshape 1 0.75em \linewidth} 
    {} 
    {\bfseries} 
    {.} 
    {.5em} 
    {} 
\newtheorem{theorem}{Theorem}[section]
\newtheorem{lemma}[theorem]{Lemma}
\newtheorem{corollary}[theorem]{Corollary}
\newtheorem{definition}[theorem]{Definition}
\newtheorem{proposition}[theorem]{Proposition}
\theoremstyle{remark}
\newtheorem{example}[theorem]{Example}
\newtheorem{remark}[theorem]{Remark}
\numberwithin{equation}{section}
\numberwithin{theorem}{section}
\title{\'Etale equivalence relations with certain prescribed torsion in their homology}
\author{Michael Francesco Ala, Hung-Chang Liao, and Aaron Tikuisis}
\begin{document}

\maketitle

\begin{abstract}
Given a non-cyclic simple dimension group $D$ and a subgroup $E$ of $\Q/\Z$, we produce a minimal \'etale equivalence relation $\mc R$ such that $H_0(\mc R)\cong D \oplus E$, where $H_0(\mc R)$ denotes the zeroth homology group of $\mc R$.
The equivalence relation $\mc R$ arises by combining tail-equivalence on a Bratteli diagram with a partial homeomorphism.
\end{abstract}

This paper contains a construction of minimal \'etale equivalence relations which realize the direct sum of a simple dimension group and a subgroup of $\Q/\Z$, as their zeroth homology groups.
The zeroth homology group of an \'etale equivalence relation is an invariant which has been around for a long time, often by other names (such as the ``$D$-invariant'' or ``coinvariant'').
It is the enveloping group of the ``type semigroup'', which goes back to Tarski.
The realization of simple dimension groups via tail equivalence on the path space of a Bratteli diagram is well-known, going back to the classical work of Bratteli \cite{Bratteli72}, Elliott \cite{Elliott76}, Krieger \cite{Krieger79}, Vershik \cite{Vershik81,Vershik82}, and Herman--Putnam--Skau \cite{HermanPutnamSkau}, and Giordano--Putnam--Skau \cite{GiordanoPutnamSkau95}, for example.

The presence of torsion in the zeroth homology group has been found in the past, and has been particularly surprising in the context of equivalence relations coming from tilings \cite{GahlerHuntonKellendonk,ForrestHuntonKellendonk}.
In \cite{Matui08}, Matui obtained fairly explicit calculations of the zeroth homology group, for particular examples containing torsion arising from $\Z^2$-actions on the Cantor set.
Our result fully calculates the zeroth homology group (and even the type semigroup), for particular examples.
The examples are designed to obtain specific groups which contain torsion.

The construction takes a Bratteli diagram for the dimension group, splits it to make room for an additional summand in the zeroth homology, then augments the tail-equivalence relation by a homeomorphism designed to capture the desired torsion.

We further explore a notion of ``approximately inner flip'' for \'etale equivalence relations, and demonstrate that some of our examples have this property.

The paper is organized as follows.
After Section~\ref{sec:Prelim} on preliminaries, we give two concrete examples of our construction.
The first obtains $\Z[\frac12]\oplus \Z/2\Z$, and demonstrates the key idea used to obtain torsion; the second obtains $(\Z+\Z\frac{1+\sqrt5}2) \oplus \Z/2\Z$, and demonstrates how a simple dimension group can be incorporated into the construction.
We set up for the general construction in Section~\ref{sec:Splitting}, containing a Bratteli diagram construction by splitting a node at each layer, and in Sections~\ref{sec:TEBR}, \ref{sec:TEBRetale} containing a method of producing homeomorphisms of path spaces and an associated \'etale equivalence relation.
Section~\ref{sec:Construction} gives our main construction.
Finally, we explore the notion of approximately inner flip in Section~\ref{sec:AIF}.

\section{Preliminaries}
\label{sec:Prelim}

\subsection{\'Etale groupoids, the type semigroup, and homology}

For a groupoid $\mc G$, we denote the unit space by \(\mc G^{(0)}\), the range and source maps by \(r, s : \mc G\to \mc G^{(0)}\) respectively.
A \emph{bisection} is a subset $\gamma$ of $\mc G$ such that the range and the source maps are each injective when restricted to $\gamma$.
An \textit{\'etale groupoid} is a locally compact Hausdorff groupoid \(\mc G\) such that the range map \(r : \mc G\to \mc G^{(0)}\) is a local homeomorphism; in this case, the source map is also a local homeomorphism.
An \emph{\'etale equivalence relation} is an \'etale groupoid $\mc R$ that is principal (i.e., such that if $r(g)=s(g)$ then $g\in \mc R^{(0)}$). 
As a set (but not topologically), we can view such $\mc R$ as a subset of $\mc R^{(0)} \times \mc R^{(0)}$, and we'll use the notation $(x,y)$ to denote elements of the set $\mc R$.
For $U\subseteq \mc R^{(0)}$, write $\id_U\coloneqq \{(x,x): x \in U\}$, a bisection in $\mc R$.

The \'etale groupoids of interest here will be those with Cantor unit spaces (these are often called \emph{ample groupoids} in the literature).
The topology of such an \'etale groupoid has a basis of compact open bisections (this follows from the proof of \cite[Lemma 8.4.9]{SimsSzaboWilliams}, for example).

The following will help us to define \'etale equivalence relations.

\begin{proposition}[{cf.\ \cite[Theorem~2.1]{Putnam18}}]\label{prop:EtaleCriterion}
Let $\mc R$ be an equivalence relation on a Cantor space $X$.
Let $\Gamma$ be a family of subsets of $\mc R$ satisfying the following.
\begin{enumerate}
\item\label{it:EtaleCriterion1} Each element $\gamma$ of $\Gamma$ is a bisection with $r(\gamma)$ and $s(\gamma)$ clopen, and $\gamma$ is the graph of a continuous function from $r(\gamma)$ to $s(\gamma)$.
\item\label{it:EtaleCriterion2} $\{U\subseteq X: U\text{ clopen, }\mathrm{id}_U \in \Gamma\}$ is a basis for the topology on $X$.
\item\label{it:EtaleCriterion3}  For all $\gamma \in \Gamma$, we have $\gamma^{-1} \in \Gamma$.
\item\label{it:EtaleCriterion4} If $\gamma_1,\gamma_2 \in \Gamma$, then 
\begin{equation} \gamma_1\gamma_2 \coloneqq \{(x,z): (x,y)\in \gamma_1,\text{ and }(y,z)\in\gamma_2\text{ for some }y\in X\} \end{equation}
is also in $\Gamma$.
\item\label{it:EtaleCriterion5} For $\gamma_1,\gamma_2 \in \Gamma$ and $(x,y) \in \gamma_1 \cap \gamma_2$, there exists $\gamma_3 \in \Gamma$ such that $(x,y) \in \gamma_3 \subseteq \gamma_1 \cap \gamma_2$.
\item \label{it:EtaleCriterion6} $\mc R = \bigcup_{\gamma \in \Gamma} \gamma$.
\end{enumerate}
    Then $\Gamma$ is a basis for a topology on $\mc R$, and $\mc R$ is an \'etale equivalence relation under this topology.

The condition \ref{it:EtaleCriterion5} can be replaced by 
\begin{enumerate}
\item[(v')] For $\gamma \in \Gamma$ and $x \in X$ such that $(x,x) \in \gamma$, there exists a neighbourhood $U$ of $x$ in $X$ such that $\mathrm{id}_U \subseteq \gamma$.
\end{enumerate}
\end{proposition}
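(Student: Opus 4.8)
The plan is to run the standard verification that a family of bisections closed under the stated operations generates an \'etale groupoid. First I would check that $\Gamma$ is a basis for a topology on $\mc R$: the covering axiom is exactly \ref{it:EtaleCriterion6}, and the intersection axiom is exactly \ref{it:EtaleCriterion5}. The one preliminary fact that makes everything else go through is that for any $\gamma\in\Gamma$ one has $\gamma\gamma^{-1}=\id_{r(\gamma)}$ and $\gamma^{-1}\gamma=\id_{s(\gamma)}$ (because $\gamma$ is a bisection), so by \ref{it:EtaleCriterion3} and \ref{it:EtaleCriterion4} the identity bisection on $r(\gamma)$ and on $s(\gamma)$ belongs to $\Gamma$; this licenses restricting any element of $\Gamma$ along the clopen range or source of any element of $\Gamma$ while staying in $\Gamma$. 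Principality needs no work: $r(x,y)=s(x,y)$ forces $x=y$.

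Next I would establish that $r$ (hence, symmetrically, $s$) is a local homeomorphism onto an open unit space. Continuity of $r$: for open $V\subseteq X$, write $V$ as a union of clopen sets $W$ with $\id_W\in\Gamma$ using \ref{it:EtaleCriterion2}, so that $r^{-1}(V)$ is the union of the sets $\id_W\,\gamma$ over these $W$ and all $\gamma\in\Gamma$, each of which is in $\Gamma$ by \ref{it:EtaleCriterion4} and hence open; symmetrically for $s$. For each $\gamma\in\Gamma$, $r|_\gamma : \gamma\to r(\gamma)$ is a continuous bijection by \ref{it:EtaleCriterion1}, and it is open because (by \ref{it:EtaleCriterion5}) every open subset of $\gamma$ is a union of elements $\delta\in\Gamma$ contained in $\gamma$, and each $r(\delta)$ is clopen in $X$ by \ref{it:EtaleCriterion1}; so $r|_\gamma$ is a homeomorphism onto the clopen set $r(\gamma)$, and likewise for $s|_\gamma$. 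It follows that each $\gamma$ is compact (homeomorphic to a closed subset of the compact space $X$), so $\mc R$ is locally compact with a basis of compact open bisections; that $\mc R^{(0)}=\id_X$ is open in $\mc R$ (it is the union of the $\id_W$ of \ref{it:EtaleCriterion2}); and, using \ref{it:EtaleCriterion2} and \ref{it:EtaleCriterion5} to identify the subspace topology on $\id_X$, that $x\mapsto(x,x)$ is a homeomorphism $X\to\mc R^{(0)}$. Composing, $r|_\gamma$ is a homeomorphism onto an open subset of $\mc R^{(0)}$, so $r$ is a local homeomorphism.

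Then I would dispatch the remaining groupoid axioms. For Hausdorffness, distinct points of $\mc R$ differ in some coordinate, and disjoint neighbourhoods in the Hausdorff space $X$ pull back through $r$ or $s$ to separate them. Inversion is continuous since the preimage of a basic open $\gamma$ is $\gamma^{-1}\in\Gamma$ by \ref{it:EtaleCriterion3}. For continuity of multiplication at a composable pair $(x,y),(y,z)$ whose product lies in a basic open $\gamma$: choose $\gamma_1',\gamma_2'\in\Gamma$ through $(x,y)$ and $(y,z)$, note $\gamma_1'\gamma_2'\in\Gamma$ contains $(x,z)$, and use \ref{it:EtaleCriterion5} to find $\gamma_3\in\Gamma$ with $(x,z)\in\gamma_3\subseteq\gamma\cap\gamma_1'\gamma_2'$; restricting $\gamma_1'$ to range $r(\gamma_3)$ and $\gamma_2'$ to the matching clopen range (legal by the preliminary fact) gives $\gamma_1\ni(x,y)$ and $\gamma_2\ni(y,z)$ in $\Gamma$ with $\gamma_1\gamma_2=\gamma_3\subseteq\gamma$, so $\gamma_1\times\gamma_2$ meets $\mc R^{(2)}$ in a neighbourhood mapping into $\gamma$. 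This proves $\mc R$ is an \'etale equivalence relation. Finally, the two hypotheses \ref{it:EtaleCriterion5} and (v') are interchangeable given \ref{it:EtaleCriterion1}--\ref{it:EtaleCriterion4} and \ref{it:EtaleCriterion6}: (v') follows from \ref{it:EtaleCriterion5} by intersecting with a suitable $\id_V$ from \ref{it:EtaleCriterion2} and observing the resulting element of $\Gamma$ is itself an identity bisection; conversely, given $(x,y)\in\gamma_1\cap\gamma_2$, apply (v') to $\gamma_1\gamma_2^{-1}\ni(x,x)$ to get clopen $U\ni x$ with $\id_U\subseteq\gamma_1\gamma_2^{-1}$, and then $\id_U\gamma_2\in\Gamma$ witnesses \ref{it:EtaleCriterion5} because $\gamma_2$ is the graph of a function --- so the whole argument may be rerun verbatim with (v') in place of \ref{it:EtaleCriterion5}.

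The main obstacle is bookkeeping rather than conceptual: because \ref{it:EtaleCriterion2} only supplies identity bisections on a basis of clopen sets, every restriction of a bisection has to be routed through the identity $\gamma\gamma^{-1}=\id_{r(\gamma)}$, and the continuity of multiplication is the one place where this has to be done for two bisections at once so that their composite sits inside a prescribed basic set. Everything else is an unwinding of the axioms.
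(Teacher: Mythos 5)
Your proof is correct, but it takes a genuinely different (and more self-contained) route than the paper. The paper's proof is very short: it forms $\Gamma'$, the set of finite intersections of elements of $\Gamma$, observes via \ref{it:EtaleCriterion5} that each element of $\Gamma'$ is a union of elements of $\Gamma$ (so the two families generate the same topology), checks that $\Gamma'$ satisfies the five hypotheses of \cite[Theorem~2.1]{Putnam18} (conditions 1--4 matching \ref{it:EtaleCriterion1}--\ref{it:EtaleCriterion4}, and condition 5 --- closure under intersection --- by construction of $\Gamma'$), and then cites Putnam's theorem to conclude. The argument for $(\mathrm{v}')\Rightarrow$\ref{it:EtaleCriterion5} is essentially identical to yours: apply $(\mathrm{v}')$ to $\gamma_1\gamma_2^{-1}\ni(x,x)$, shrink $U$ using \ref{it:EtaleCriterion2} so that $\id_U\in\Gamma$, and take $\gamma_3=\id_U\gamma_2$.

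What you do instead is a full, from-first-principles verification: basis axioms, principality, local homeomorphism property of $r$ and $s$ (routed through the observation that $\gamma\gamma^{-1}=\id_{r(\gamma)}\in\Gamma$ permits restrictions), compactness of each $\gamma$ and hence local compactness, openness of the unit space and its identification with $X$, Hausdorffness, and continuity of inversion and multiplication --- the last being the only delicate step, which you handle correctly by shrinking $\gamma_1'$ to $\id_{r(\gamma_3)}\gamma_1'$ so that $\gamma_1\gamma_2'=\gamma_3$ (using injectivity of $r$ on the bisection $\gamma_1'\gamma_2'$). You also supply the converse implication \ref{it:EtaleCriterion5}$\Rightarrow(\mathrm{v}')$, which the paper doesn't bother with (since only $(\mathrm{v}')\Rightarrow$\ref{it:EtaleCriterion5} is needed to deduce the ``replaced by'' clause). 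The trade-off is the usual one: the paper's route is quick and offloads the groupoid-topology bookkeeping to a cited reference, while yours is longer but fully self-contained and makes the mechanism visible. Both are valid.
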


\begin{proof}
Let $\Gamma'$ be the set of all finite intersections of elements in $\Gamma$.
By \ref{it:EtaleCriterion5}, every element of $\Gamma'$ can be written as a union of elements in $\Gamma$.
We will verify the conditions in \cite[Theorem~2.1]{Putnam18}, for $\Gamma'$, in order to conclude that it (and therefore also $\Gamma$) is the basis of an \'etale equivalence relation topology.

Conditions 1-4 of \cite[Theorem~2.1]{Putnam18} match conditions \ref{it:EtaleCriterion1}-\ref{it:EtaleCriterion4}, and in each case, we can see that since it holds for $\Gamma$, it also holds for $\Gamma'$.
Condition 5 of \cite[Theorem~2.1]{Putnam18} is that $\Gamma'$ is closed under intersections, which holds by construction.
Therefore, $\Gamma'$ is a basis for an \'etale topology on $\bigcup_{\gamma \in \Gamma'} \gamma = \bigcup_{\gamma \in \Gamma} \gamma = \mc R$.

For the final statement, suppose that \ref{it:EtaleCriterion1}-\ref{it:EtaleCriterion4} and (v') holds and let us check that \ref{it:EtaleCriterion5} holds.
Let $\gamma_1,\gamma_2 \in \Gamma$ and let $(x,y) \in \gamma_1 \cap \gamma_2$.
Then $\gamma_1\gamma_2^{-1} \in \Gamma$ and $(x,x) \in \gamma_1\gamma_2^{-1}$, so by (v') there exists a neighbourhood $U$ of $x$ such that $\mathrm{id}_U\subseteq \gamma_1\gamma_2^{-1}$.
By (ii), we may assume that $U$ is clopen and $\mathrm{id}_U \in \Gamma$.
Hence $\gamma_3 \coloneqq \id_U \gamma_2 \in \Gamma$ by \ref{it:EtaleCriterion4}, and we see that it both contains $(x,y)$ and is contained in $\gamma_1 \cap \gamma_2$.
\end{proof}

Given a Cantor space $X$ and a partial homeomorphism $\varphi:A \to B$ (where $A,B\subseteq X$ are clopen subsets), it generates an equivalence relation $\mc R_\varphi$ via $x \sim \varphi(x)$.
For our construction, we will only need the case where $A$ and $B$ are disjoint; this case simplifies the description of the equivalence relation, and ensures that it has an \'etale topology.
If $A$ and $B$ are disjoint, then
\begin{equation}\label{eq:Rphidesc}
 \mc R_\varphi = \{(x,x): x \in X\} \cup \{(x,\varphi(x)): x \in A\} \cup \{(\varphi(x),x): x \in A\}, \end{equation}
and the \'etale topology has a basis of clopen bisections of the form
\begin{equation} \id_U, \{(x,\varphi(x)): x \in U\}, \{(\varphi(x),x): x \in U\}, \end{equation}
where in the first case, $U$ can be any clopen subset of $X$, and in the other cases, $U$ can be any clopen subset of $A$.

Another construction we will use is combining two equivalence relations.
Given two \'etale equivalence relations $\mathcal R_1,\mathcal R_2$ on a common unit space $X$, we can form the smallest equivalence relation $\mathcal R$ generated by $\mathcal R_1$ and $\mathcal R_2$.
One hopes to topologize this by a basis $\Gamma$ of bisections of the form 
\begin{equation} \label{eq:CombinedEquivRelation} \gamma_1\cdots\gamma_n \coloneqq \{(x_0,x_n): (x_0,x_1) \in \gamma_1, \cdots (x_{n-1},x_n) \in \gamma_n\}, \end{equation}
where $n\in \mathbb N$ and each $\gamma_i$ is a compact open bisection in $\mc R_1$ or $\mc R_2$.
The following tells us when this is a basis making $\mc R$ an \'etale equivalence relation.

\begin{proposition}\label{prop:EtaleCombinedRelation}
    Let $\mc R_1,\mc R_2$ be \'etale equivalence relations and define $\Gamma$ as above.
Suppose that for any $\gamma \in \Gamma$ and any $x \in X$ such that $(x,x) \in \gamma$, there exists a neighbourhood $U$ of $x$ in $X$ such that $\id_U \subseteq \gamma$ for all $y\in U$.
  Then $\Gamma$ is a basis for a topology on $\mc R$, and $\mc R$ is an \'etale equivalence relation under this topology.
\end{proposition}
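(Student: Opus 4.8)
The plan is to reduce everything to Proposition~\ref{prop:EtaleCriterion}, in the form where condition \ref{it:EtaleCriterion5} is replaced by (v'). The first observation is that the hypothesis of the present proposition is, word for word, condition (v') for the family $\Gamma$. So it suffices to verify conditions \ref{it:EtaleCriterion1}--\ref{it:EtaleCriterion4} and \ref{it:EtaleCriterion6} for $\Gamma$; Proposition~\ref{prop:EtaleCriterion} then applies directly and yields the claim.

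Conditions \ref{it:EtaleCriterion2}, \ref{it:EtaleCriterion3}, \ref{it:EtaleCriterion4} and \ref{it:EtaleCriterion6} are formal. For \ref{it:EtaleCriterion2}: since $X$ is a Cantor space and each $\mc R_i$ is ample, $\id_U$ is a compact open bisection of $\mc R_1$ — hence a length-one element of $\Gamma$ — for every clopen $U\subseteq X$, and the clopen sets form a basis for $X$. For \ref{it:EtaleCriterion3} and \ref{it:EtaleCriterion4}: a concatenation of two products of compact open bisections drawn from $\mc R_1$ and $\mc R_2$ is again such a product, and $(\gamma_1\cdots\gamma_n)^{-1}=\gamma_n^{-1}\cdots\gamma_1^{-1}$ with each $\gamma_i^{-1}$ a compact open bisection of the same $\mc R_j$. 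For \ref{it:EtaleCriterion6}: on the one hand $\mc R_1\cup\mc R_2\subseteq\mc R$ and $\mc R$, being an equivalence relation, is closed under the composition in \eqref{eq:CombinedEquivRelation}, so every element of $\Gamma$ is contained in $\mc R$; on the other hand $\bigcup_{\gamma\in\Gamma}\gamma$ contains $\mc R_1$ and $\mc R_2$ (each $\mc R_i$ is the union of its compact open bisections) and is itself an equivalence relation — reflexive and symmetric because $\id_X\in\mc R_1\subseteq\Gamma$ and by \ref{it:EtaleCriterion3}, transitive by \ref{it:EtaleCriterion4} — so it contains the equivalence relation $\mc R$ generated by $\mc R_1$ and $\mc R_2$. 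Hence $\bigcup_{\gamma\in\Gamma}\gamma=\mc R$.

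The one point requiring genuine work is \ref{it:EtaleCriterion1}: each $\gamma=\gamma_1\cdots\gamma_n\in\Gamma$ must be a bisection whose range and source are clopen and which is the graph of a continuous function $r(\gamma)\to s(\gamma)$. I would prove this by induction on $n$. The base case $n=1$ is precisely the statement that a compact open bisection of an ample groupoid is, via $r$ and $s$, the graph of a homeomorphism between clopen subsets of $X$, its image under $r$ (resp.\ $s$) being compact and open hence clopen. For the inductive step, write $\gamma=\gamma'\gamma_n$ where $\gamma'=\gamma_1\cdots\gamma_{n-1}$ is, by the inductive hypothesis, the graph of a continuous bijection $f\colon R\to S$ between clopen subsets of $X$ (surjective since $S=f(R)$, injective since $\gamma'$ is a bisection), and $\gamma_n$ is the graph of a homeomorphism $g\colon R'\to S'$ between clopen subsets of $X$. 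Then $\gamma$ is the graph of $g\circ f$ restricted to $\{x\in R: f(x)\in R'\}=f^{-1}(S\cap R')$; this map is continuous and injective, its domain $f^{-1}(S\cap R')$ is clopen in $X$ because $f$ is continuous and $R$ is clopen, and its range $g(S\cap R')$ is the image of a clopen subset of $R'$ under the homeomorphism $g\colon R'\to S'$, hence clopen in $X$. Thus $\gamma$ again has the required form, completing the induction.

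With \ref{it:EtaleCriterion1}--\ref{it:EtaleCriterion4}, \ref{it:EtaleCriterion6} and (v') established for $\Gamma$, Proposition~\ref{prop:EtaleCriterion} gives that $\Gamma$ is a basis for a topology making $\mc R$ an \'etale equivalence relation, which is the assertion. The main obstacle — such as it is — is the bookkeeping in the inductive step of \ref{it:EtaleCriterion1}: tracking how the domain of the composite partial homeomorphism shrinks to $f^{-1}(S\cap R')$ and confirming that this shrinking preserves clopenness of both domain and range. Everything else is either a direct unwinding of the definition of $\Gamma$ or a citation of Proposition~\ref{prop:EtaleCriterion}.
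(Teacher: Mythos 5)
Your proposal is correct and follows essentially the same strategy as the paper: identify the hypothesis as condition (v') of Proposition~\ref{prop:EtaleCriterion}, dispose of \ref{it:EtaleCriterion2}--\ref{it:EtaleCriterion4} and \ref{it:EtaleCriterion6} as formal, and establish \ref{it:EtaleCriterion1} by induction on the length of the word $\gamma_1\cdots\gamma_n$, tracking that the domain and range of the composite partial homeomorphism remain clopen. The only cosmetic differences are that you peel off the last factor $\gamma_n$ where the paper peels off the first, and you spell out the bookkeeping (e.g.\ $f^{-1}(S\cap R')$, $g(S\cap R')$ clopen) that the paper leaves compressed.
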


\begin{proof}
The hypothesis in the last line is exactly condition (v') of Proposition~\ref{prop:EtaleCriterion}; we need only check that the other hypotheses hold automatically.
Conditions \ref{it:EtaleCriterion3}, \ref{it:EtaleCriterion4}, and \ref{it:EtaleCriterion6} are clear.
For \ref{it:EtaleCriterion1}, consider a set $\gamma$ of the form \eqref{eq:CombinedEquivRelation}, for some $\gamma_1,\dots,\gamma_n$.
Since each $\gamma_i$ is the graph of a continuous injective function, so is $\gamma$.
Moreover, letting $\gamma' \coloneqq \gamma_2\dots\gamma_n$ and treating $\gamma_1$ as a function $s(\gamma_1) \to r(\gamma_1)\subseteq X$ in the following, we have
\begin{equation} s(\gamma)=\gamma_1^{-1}(s(\gamma')). \end{equation}
Using induction, we may assume that $s(\gamma')$ is clopen, and then using that $\gamma_1$ is continuous with $s(\gamma_1)$ clopen, it follows that $s(\gamma)$ is clopen.
Likewise, $r(\gamma)$ is clopen.
By \ref{it:EtaleCriterion1}, if $\id_U \in \Gamma$ then $U$ is clopen.
From this and Definition~\ref{eq:CombinedEquivRelation}, \ref{it:EtaleCriterion2} is clear.
\end{proof}

Groupoid cohomology for \'etale groupoids over zero-dimensional spaces is defined by Crainic and Moerdijk in \cite{CrainicMoerdijk} (in a more general setting of sheaves over groupoids). We refer to Matui's definition in \cite[Definition~3.1]{Matui12} which is more palatable by avoiding sheaves.

\begin{definition}[\cite{CrainicMoerdijk}]
    Let \(\mc G\) be an \'etale groupoid with Cantor unit space. The \textbf{zeroth homology group} \(H_0(\mc G)\) -- also called the \textit{covariant} -- is the quotient of $C_c(\mc G^{(0)},\mathbb Z)$ by the set of functions of the form
\begin{equation}
\label{eq:MatuiHomologyFunc}
 F(x) := \sum_{s(g)=x} k(g) - \sum_{r(g)=x} k(g), \end{equation}
where $k \in C_c(\mathcal G,\mathbb Z)$.
\end{definition}

A related notion is the type semigroup, which goes back to Tarski.
It is formalized in \cite{BonickeLi,RainoneSims}.
There it is defined as a quotient of $C_0(\mc G^{(0)},\Z_+)$, where $\Z_+ \coloneqq \{0,1,\dots,\}$.
Observe that this semigroup $C_0(\mc G^{(0)},\mathbb Z_+)$ is readily identified with the free abelian group with generators $\langle A\rangle$ where $A$ is a compact open subset of $\mc G^{(0)}$, satisfying the relation $\langle A \rangle + \langle B \rangle = \langle A \cup B\rangle$ provided $A \cap B = \emptyset$.
Using this observation, we can see that the following definition agrees with the ones in \cite{BonickeLi,RainoneSims}.

\begin{definition}\label{def:TypeSemigroup}
    Let \(\mc G\) be an \'etale groupoid with Cantor unit space. The \emph{type semigroup} of $\mc G$, denoted $S(\mc G)$, is the universal abelian semigroup with generators $[A]$ where $A$ is a compact open set in $\mc G^{(0)}$, satisfying the relations
\begin{enumerate}
    \item $[s(U)]=[r(U)]$ for any compact open bisection $U\subseteq \mc G$, and
    \item $[A]+[B]=[A\cup B]$ provided $A\cap B = \emptyset$.
\end{enumerate}
\end{definition}

\begin{proposition}[{\cite[Lemma 1.5]{AraBonickeBosaLi}}]
\label{prop:HomologyEnvelopingTypeSemigroup}
    Let \(\mc G\) be an \'etale groupoid with Cantor unit space.
    Then $H_0(\mc G)$ is the enveloping group of $S(\mc G)$.
\end{proposition}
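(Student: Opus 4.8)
The plan is to exhibit mutually inverse homomorphisms between $H_0(\mc G)$ and the enveloping group of $S(\mc G)$, which I write $G(S(\mc G))$, by playing the two universal properties off each other. Conceptually the statement is just that the Grothendieck-group functor, being a left adjoint, carries the presentation of $S(\mc G)$ as a quotient of the positive cone $C_c(\mc G^{(0)},\Z_+)$ to a presentation of $G(S(\mc G))$ as the corresponding quotient of $C_c(\mc G^{(0)},\Z)$, and that latter quotient is exactly $H_0(\mc G)$. I would spell this out concretely as follows.

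\emph{The map $G(S(\mc G)) \to H_0(\mc G)$.} Send a compact open $A \subseteq \mc G^{(0)}$ to the class $[1_A]$ of its indicator function in $H_0(\mc G)$. This respects relation (ii) of Definition~\ref{def:TypeSemigroup} since $1_A + 1_B = 1_{A\cup B}$ when $A \cap B = \emptyset$, and it respects relation (i) because, for a compact open bisection $U$, injectivity of $s$ and of $r$ on $U$ gives $\sum_{s(g)=x}1_U(g)=1_{s(U)}(x)$ and $\sum_{r(g)=x}1_U(g)=1_{r(U)}(x)$, so $1_{s(U)}-1_{r(U)}$ is a function of the form \eqref{eq:MatuiHomologyFunc} with $k=1_U$, hence is zero in $H_0(\mc G)$. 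By the universal property of $S(\mc G)$ this yields a semigroup homomorphism $S(\mc G) \to H_0(\mc G)$; since $H_0(\mc G)$ is a group, the universal property of the enveloping group extends it to a group homomorphism $\bar\phi\colon G(S(\mc G)) \to H_0(\mc G)$.

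\emph{The map $H_0(\mc G) \to G(S(\mc G))$.} Using the identification of $C_c(\mc G^{(0)},\Z_+)$ with the abelian monoid presented by generators $\langle A\rangle$ and relations $\langle A\rangle + \langle B\rangle = \langle A\cup B\rangle$ for disjoint $A,B$ (noted just before Definition~\ref{def:TypeSemigroup}), the assignment $\langle A\rangle \mapsto [A]$ defines a monoid homomorphism $C_c(\mc G^{(0)},\Z_+) \to G(S(\mc G))$. Since $C_c(\mc G^{(0)},\Z)$ is the Grothendieck group of its (cancellative) positive cone, this extends to a group homomorphism $\tilde\psi\colon C_c(\mc G^{(0)},\Z) \to G(S(\mc G))$ with $\tilde\psi(1_A)=[A]$. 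To see that $\tilde\psi$ annihilates the subgroup of relations defining $H_0(\mc G)$: every $k \in C_c(\mc G,\Z)$ is an integer combination $\sum_i m_i 1_{V_i}$ of indicators of compact open bisections (partition each of the finitely many nonzero level sets of $k$ into finitely many basic compact open bisections), and $k \mapsto F$ in \eqref{eq:MatuiHomologyFunc} is $\Z$-linear in $k$, so the corresponding $F$ equals $\sum_i m_i(1_{s(V_i)} - 1_{r(V_i)})$, each summand of which is sent by $\tilde\psi$ to $[s(V_i)]-[r(V_i)]=0$ by relation (i). Hence $\tilde\psi$ descends to $\psi\colon H_0(\mc G) \to G(S(\mc G))$.

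\emph{Mutual inverseness.} On generators, $\psi\bar\phi[A]=\psi[1_A]=[A]$ and $\bar\phi\psi[1_A]=\bar\phi[A]=[1_A]$. Since the classes $[A]$ generate $S(\mc G)$, and hence their images generate $G(S(\mc G))$, while the classes $[1_A]$ of indicators of compact open sets generate $H_0(\mc G)$, both composites are the identity. The only points requiring genuine care are the two structural facts invoked along the way --- that a general $k \in C_c(\mc G,\Z)$ decomposes into indicators of compact open bisections, and that the positive cone is the claimed universal monoid --- both routine for ample groupoids; I expect the main effort to be bookkeeping of well-definedness rather than any single hard step.
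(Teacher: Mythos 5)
The paper does not actually prove this proposition; it cites it to \cite[Lemma~1.5]{AraBonickeBosaLi} and moves on, so there is no in-text argument to compare against. Your proof, however, is correct and is the natural argument one would extract from the definitions. Both universal-property constructions are sound: for $\bar\phi$, the verification that $1_{s(U)}-1_{r(U)}$ is of the form \eqref{eq:MatuiHomologyFunc} with $k=1_U$ uses bisectionality of $U$ exactly as needed; for $\psi$, the key point that every $k\in C_c(\mc G,\Z)$ is an integer combination of indicators of compact open bisections is valid because $\mc G$ is ample (each nonzero level set of the locally constant, compactly supported $k$ is compact open, and can be partitioned into basic compact open bisections), and the $\Z$-linearity of $k\mapsto F$ then makes the descent immediate. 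The identification of $C_c(\mc G^{(0)},\Z_+)$ as the universal monoid on generators $\langle A\rangle$ with only the disjoint-union relation, and the fact that $C_c(\mc G^{(0)},\Z)$ is its Grothendieck group, are both routine and are used correctly. Checking the two composites on generators suffices since those generators generate everything as a group in each target. In short: the paper delegates this to a reference, and your argument is a complete and correct self-contained proof of the cited lemma.
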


The image of $S(\mc G)$ in $H_0(\mc G)$ is the \emph{positive cone} of $H_0(\mc G)$, denoted $H_0(\mc G)_+$.

The following provides a natural way of defining homomorphisms between type semigroups.

\begin{proposition}\label{prop:AbstractHomomorphism}
Let $\mathcal G,\mathcal H$ be \'etale groupoids with Cantor unit spaces, and let $\theta:\mathcal G^{(0)} \to \mathcal H^{(0)}$ be a homeomorphism.
Suppose that for any compact open bisection $U\subseteq \mathcal G$, there exists a compact open bisection $V \subseteq \mathcal H$ such that
\begin{equation} \theta(s_{\mc G}(U))=s_{\mc H}(V)\quad\text{and}\quad \theta(r_{\mc G}(U))=r_{\mc H}(V). \end{equation}
Then $\theta$ induces a surjective homomorphism $S(\mc G) \to S(\mc H)$ given by $\theta_*([A]) \mapsto [\theta(A)]$.
\end{proposition}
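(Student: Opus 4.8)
The plan is to use the universal property that is built into Definition~\ref{def:TypeSemigroup}: a semigroup homomorphism out of $S(\mathcal G)$ amounts to an assignment of an element of the target abelian semigroup to each generator $[A]$, subject only to the requirement that it respects the two families of defining relations. So I would first set $[A] \mapsto [\theta(A)] \in S(\mathcal H)$ for each compact open $A \subseteq \mathcal G^{(0)}$; this is legitimate since $\theta$ is a homeomorphism, so $\theta(A)$ is again compact open. For relation~(ii), if $A \cap B = \emptyset$ then injectivity of $\theta$ gives $\theta(A) \cap \theta(B) = \emptyset$ and $\theta(A \cup B) = \theta(A) \cup \theta(B)$, so $[\theta(A)] + [\theta(B)] = [\theta(A) \cup \theta(B)] = [\theta(A \cup B)]$ in $S(\mathcal H)$, as required.

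The hypothesis enters only in checking relation~(i). Given a compact open bisection $U \subseteq \mathcal G$, it supplies a compact open bisection $V \subseteq \mathcal H$ with $\theta(s_{\mathcal G}(U)) = s_{\mathcal H}(V)$ and $\theta(r_{\mathcal G}(U)) = r_{\mathcal H}(V)$; applying relation~(i) in $S(\mathcal H)$ to $V$ then gives $[\theta(s_{\mathcal G}(U))] = [s_{\mathcal H}(V)] = [r_{\mathcal H}(V)] = [\theta(r_{\mathcal G}(U))]$, which is precisely the image of relation~(i) for $U$. By the universal property we obtain a well-defined homomorphism $\theta_* : S(\mathcal G) \to S(\mathcal H)$ with $\theta_*([A]) = [\theta(A)]$. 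Surjectivity is then immediate: any generator $[B]$ of $S(\mathcal H)$ equals $\theta_*([\theta^{-1}(B)])$, with $\theta^{-1}(B) \subseteq \mathcal G^{(0)}$ compact open, and the generators generate $S(\mathcal H)$.

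I do not anticipate a genuine obstacle; the only point requiring care is the bookkeeping around the universal property — namely, that the abelian semigroup presented by the stated generators and relations has the property that any relation-respecting map of its generators into another abelian semigroup extends uniquely to a homomorphism. This is the standard behaviour of a semigroup defined by generators and relations, and granting it, the argument above is purely formal.
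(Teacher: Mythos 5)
Your argument is correct: you verify that $[A]\mapsto[\theta(A)]$ respects both defining relations of Definition~\ref{def:TypeSemigroup} (relation~(i) is exactly where the hypothesis on $\theta$ is used; relation~(ii) follows from injectivity of $\theta$), invoke the universal property of a semigroup presented by generators and relations, and obtain surjectivity because $\theta$ is a bijection so every generator $[B]$ of $S(\mc H)$ is hit by $[\theta^{-1}(B)]$. The paper states this proposition without proof, so there is nothing to compare against, but this is the intended argument.
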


\subsection{Dimension groups and Bratteli diagrams}

Recall that a \textit{partially ordered abelian group} is an abelian group \(G\) equipped with a \emph{cone of positive elements} $G_+$ (which induces a partial order), and a morphism of partially ordered abelian groups is an order-preserving group homomorphism.
For us a cone includes $0$.

\begin{definition}\label{def:dimensiongroup}
    A \textbf{dimension group} is an inductive limit, in the category of partially ordered abelian groups, of a sequence of groups of the form $(\Z^n,\Z_+^n)$.
\end{definition}

A well-known theorem of Effros, Handelman, and Shen says that dimension groups are \textit{precisely} those partially ordered abelian groups \((G,G^+)\) which are both \textit{unperforated} and possess the \textit{Riesz interpolation property} (\cite{EffrosHandelmanShen}).

Positive homomorphisms from $(\Z^n,\Z_+^n)$ to $(\Z^m,\Z_+^m)$ are described by matrices in $M_{m\times n}(\Z_+)$; we will write the connecting maps in an inductive system for a dimension group as such matrices.
If a dimension group is simple (i.e., has no order ideals; see \cite[Chapter 14]{Goodearl:book}), then it is an inductive limit of a system as in Definition~\ref{def:dimensiongroup} where, in addition, all the matrix entries are nonzero in all the connecting maps (\cite[Theorem~2.14]{Putnam10}).

\begin{definition}
    A \textit{Bratteli diagram} is a directed multigraph \(\mc B = (V, E)\) such that
\begin{enumerate}
    \item the vertices are graded,
\begin{equation} V = V_{\ell_0}\sqcup V_{{\ell_0}+1}\sqcup \cdots, \end{equation}
for some ${\ell_0}\in \mathbb N$ (when not otherwise specified, ${\ell_0}=0$) with each $V_\ell$ finite, and
\item the edges are graded,
\begin{equation} E = E_{\ell_0} \sqcup E_{{\ell_0}+1} \sqcup \cdots \end{equation}
with each $E_\ell$ finite and containing only edges from vertices in $V_\ell$ to vertices in $V_{\ell+1}$.
\end{enumerate}
\end{definition}
To help make things clear, the edges in our Bratteli diagrams will always be of the form $\edge{v}{e}{w}$, where $v$ and $w$ are the vertices that the edge starts and ends at, respectively, and $e$ is a label.
We write $E(v,w)$ for the set of labels on edges from $v$ to $w$.
A path containing edges $\edge{v_\ell}{e_\ell}{v_{\ell+1}}, \edge{v_{\ell+1}}{e_{\ell+1}}{v_{\ell+2}}, \dots$ is written 
\begin{equation} \halfedge{v_\ell}{e_\ell}\halfedge{v_{\ell+1}}{e_{\ell+1}}\halfedge{v_{\ell+2}}{}\cdots.\end{equation}

Bratteli diagrams are typically used to represent an inductive system in the definition of a dimension group.
More precisely, if $\mathcal B=(V,E)$ is a Bratteli diagram, then one forms an inductive system
\begin{equation} \label{eq:BrattDiagSystem} \begin{tikzcd}
    G_1 \ar[r,"A_1^2"] & G_2 \ar[r,"A_2^3"] & \cdots
\end{tikzcd}\end{equation}
by setting $G_\ell\coloneqq \mathbb Z^{V_\ell}$, and the morphism $A_{\ell}^{\ell+1}$ is the matrix $[a^{\ell}_{w,v}] \in M_{V_{\ell+1}\times V_{\ell}}(\mathbb Z_+)$ with $a_{v,w}\coloneqq |E(v,w)|$ (that is, the number of edges from $v$ to $w$).
Naturally, every inductive system of ordered groups of the form $(\mathbb Z^n,\mathbb Z_+^n)$ comes from a Bratteli diagram, by reversing this construction.

Given a Bratteli diagram $\mathcal B=(V,E)$ and given $\ell \geq k \geq \ell_0$, we write $E_{k\dots\ell}$ (or $E_{k\dots\ell}(\mc B)$) for the set of paths from a vertex in $V_k$ to a vertex in $V_\ell$ (if $k=\ell$ then $E_{k\dots\ell}$ is just $V_\ell$).
For $m\geq \ell \geq k \geq \ell_0$ and $A \subseteq E_{k\dots\ell}$, we write $AE_{\dots m}$ (or $AE_{\dots m}(\mc B)$) for the subset of $E_{k\dots m}$ consisting of paths that begin with a path in $A$; in the case of a singleton, we write $wE_{\dots m}$ to mean $\{w\}E_{\dots m}$.

The \textit{infinite path space} asociated to a Bratteli diagram \(\mc B\), denoted \(X_{\mc B}\), consists of all infinite paths in \(\mc B\) starting at a vertex in \(V_{\ell_0}\).
For a set of finite paths $A \subseteq E_{\ell_0 \dots\ell}$, we write $AX_{\mc B}$ for the subset of $X_{\mc B}$ containing infinite paths that start with a path in $A$; in the case of a singleton, we write $wX_{\mc B}$ to mean $\{w\}X_{\mc B}$, and this is called a \emph{cylinder set}.

$X_{\mc B}$ is endowed with the topology given by using the cylinder sets $wX_{\mc B}$ (over all finite paths $w$ starting in $V_0$) as a basis.
Cylinder sets are compact and open in this topology, and $X_{\mc B}$ is totally disconnected, compact, and metrizable.

Given a Bratteli diagram $\mc B=(V=V_{\ell_0}\sqcup V_{\ell_0+1}\sqcup\cdots ,E=E_{\ell_0}\sqcup E_{\ell_0+1}\sqcup\cdots)$ and a natural number $k_0\geq \ell_0$, define the \emph{truncated Bratteli diagram} 
\begin{equation} \mathrm{Trunc}_{k_0}(\mathcal B) \coloneqq (V_{k_0}\sqcup V_{k_0+1}\sqcup\cdots, E_{k_0}\sqcup E_{k_0+1}\sqcup\cdots). \end{equation}
For $k\geq k_0\geq \ell \geq \ell_0$, with a slight abuse of notation, define $\mathrm{Trunc}_{k_0}:E_{\ell\dots m} \to E_{k_0\dots m}$ by 
\begin{equation} \mathrm{Trunc}_{k_0}(\edge{v_{\ell}}{e_{\ell}}{v_{\ell+1}}\cdots\edge{v_{m-1}}{e_{m-1}}{v_m} \in E_{\ell\dots m})\coloneqq \edge{v_{k_0}}{e_{k_0}}{v_{k_0+1}}\cdots\edge{v_{m-1}}{e_{m-1}}{v_m} \in E_{k_0\dots m}. \end{equation}
(The image can be viewed as a path in $\mathrm{Trunc}_{k_0}(\mc B)$.)
Similarly, for an infinite path $w=\halfedge{v_{\ell_0}}{e_{\ell_0}}\halfedge{v_{\ell_0+1}}{e_{\ell_0+1}}\cdots \in X_{\mc B}$, define
\begin{equation} \mathrm{Trunc}_{k_0}(w)\coloneqq \halfedge{v_{k_0}}{e_{k_0}}\halfedge{v_{k_0+1}}{e_{k_0+1}}\cdots \in X_{\mathrm{Trunc}_{k_0}(\mc B)}. \end{equation}

We let \(\mc R_{\mc B,\mathrm{tail}}\) be the tail-equivalence relation on \(X_{\mc B}\), and for any $\ell\geq \ell_0$, the subequivalence ${\mc R_{\mc B,\ell}}$ consists of all pairs $(w,z)$ such that $\mathrm{Trunc}_\ell(w)=\mathrm{Trunc}_\ell(z)$.
Thus, \(\mc R_{\mc B,\mathrm{tail}} = \bigcup_\ell \mc R_{\mc B,\ell}\).
We endow the tail-equivalence relation with the inductive limit topology, which has a basis of compact open bisections of the form
\begin{equation}
\label{eq:TailEqBasis}
U_{w,z}\coloneqq \{(wx,zx) \in X_{\mc B} \times X_{\mc B}: x\text{ is an infinite path starting at $v$}\}, \end{equation}
where $w,z$ are two finite paths starting in $V_{\ell_0}$ and ending at the same vertex $v$ (see \cite[Section~2.3]{Putnam10} for example).

Homology provides a formal link between tail equivalence of a Bratteli diagram and the corresponding dimension group.
The argument given in \cite[Theorem~2.9]{Putnam10}, which is stated in terms of the zeroth homology group, in fact shows the following.

\begin{proposition}[{cf.\ \cite[Theorem~2.9]{Putnam10}}]
Let $\mathcal B=(V,E)$ be a Bratteli diagram and let $(G,G_+)$ be the dimension group given as the inductive limit of the associated inductive sequence \eqref{eq:BrattDiagSystem}.
Then 
\begin{equation} S(\mc R_{\mc B,\mathrm{tail}}) \cong G_+, \end{equation}
by an explicit isomorphism $\alpha$ described as follows.
Let $w$ be a path from a vertex in $V_0$ to a vertex $v_\ell \in V_\ell$.
Let $e_{v_\ell} \in \Z_+^{V_\ell}$ be the canonical basis element that is $0$ in every entry except for $v_\ell$, where it is $1$.
Then $\alpha([wX_{\mc B}])$ is equal to the image of $e_{v_\ell}$ in $G_+$.
\end{proposition}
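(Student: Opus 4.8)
The plan is to construct the map $\alpha$ using the universal property of $S(\mc R_{\mc B,\mathrm{tail}})$ from Definition~\ref{def:TypeSemigroup}, and then to verify bijectivity directly from the description of $G$ as a colimit. For a vertex $v\in V_\ell$ write $\overline{e_v}\in G_+$ for the image of the standard basis vector $e_v\in\Z_+^{V_\ell}$ under the canonical map $\phi_\ell:\Z^{V_\ell}\to G$; recall that $G_+=\bigcup_\ell\phi_\ell(\Z_+^{V_\ell})$, that $\phi_\ell=\phi_{\ell+1}\circ A_\ell^{\ell+1}$, and that for $x,y\in\Z^{V_\ell}$ one has $\phi_\ell(x)=\phi_\ell(y)$ in $G$ exactly when $A_\ell^L x=A_\ell^L y$ in $\Z^{V_L}$ for some $L\geq\ell$, where $A_\ell^L$ denotes the composite connecting matrix. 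Given a compact open $A\subseteq X_{\mc B}$, choose a decomposition $A=\bigsqcup_i w_iX_{\mc B}$ into cylinder sets and put $\widetilde\alpha(A):=\sum_i\overline{e_{\mathrm{end}(w_i)}}$, where $\mathrm{end}(w_i)$ is the terminal vertex of $w_i$. The first task is to check this does not depend on the decomposition: any two cylinder decompositions of $A$ have a common refinement obtained by prolonging all the $w_i$ to a common length, and prolonging a single cylinder via $w_iX_{\mc B}=\bigsqcup_e(w_i e)X_{\mc B}$ leaves the sum unchanged precisely because $\overline{e_v}=\sum_u a^\ell_{uv}\overline{e_u}$ for $v=\mathrm{end}(w_i)\in V_\ell$, which is exactly the relation $\phi_\ell=\phi_{\ell+1}\circ A_\ell^{\ell+1}$.

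With $\widetilde\alpha$ a well-defined function on compact open sets, it is immediate that $\widetilde\alpha(A)+\widetilde\alpha(B)=\widetilde\alpha(A\cup B)$ when $A\cap B=\emptyset$, so relation (ii) of Definition~\ref{def:TypeSemigroup} holds. For relation (i), let $U\subseteq\mc R_{\mc B,\mathrm{tail}}$ be a compact open bisection; writing $U$ as a finite disjoint union of basic bisections $U_{w_k,z_k}$ as in \eqref{eq:TailEqBasis} (each $w_k,z_k$ necessarily of the same length, since they terminate at a common vertex), the bisection property makes the sets $w_kX_{\mc B}$ pairwise disjoint and likewise the $z_kX_{\mc B}$, so $s(U)$ and $r(U)$ are the disjoint unions $\bigsqcup_k w_kX_{\mc B}$ and $\bigsqcup_k z_kX_{\mc B}$, whence $\widetilde\alpha(s(U))=\sum_k\overline{e_{\mathrm{end}(w_k)}}=\sum_k\overline{e_{\mathrm{end}(z_k)}}=\widetilde\alpha(r(U))$. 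By the universal property, $\widetilde\alpha$ induces a semigroup homomorphism $\alpha:S(\mc R_{\mc B,\mathrm{tail}})\to G_+$ satisfying $\alpha([wX_{\mc B}])=\overline{e_{\mathrm{end}(w)}}$, which is the asserted formula. It is surjective because the elements $\overline{e_v}$, ranging over the vertices of $\mc B$, generate $G_+$ as a semigroup and each equals $\alpha([wX_{\mc B}])$ for any path $w$ with $\mathrm{end}(w)=v$ (here one uses that every vertex of $\mc B$ is reached by a path from $V_{\ell_0}$).

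For injectivity, note first that by relation (i) applied to $U_{w,w'}$, all cylinder sets $wX_{\mc B}$ with $\mathrm{end}(w)=v\in V_\ell$ represent one common class $c_v^{(\ell)}\in S(\mc R_{\mc B,\mathrm{tail}})$; these classes generate the semigroup, and refining cylinders by one layer gives $c_v^{(\ell)}=\sum_u a^\ell_{uv}c_u^{(\ell+1)}$, whence $c_v^{(\ell)}=\sum_{u\in V_L}(A_\ell^L)_{uv}c_u^{(L)}$ for all $L\geq\ell$. Now suppose $\alpha(a)=\alpha(b)$ for $a,b\in S(\mc R_{\mc B,\mathrm{tail}})$; passing to a common level $\ell$, write $a=\sum_{v\in V_\ell}k_vc_v^{(\ell)}$ and $b=\sum_{v\in V_\ell}k'_vc_v^{(\ell)}$ with $k,k'\in\Z_+^{V_\ell}$, so $\alpha(a)=\phi_\ell(k)$ and $\alpha(b)=\phi_\ell(k')$. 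Then $\phi_\ell(k)=\phi_\ell(k')$ forces $A_\ell^L k=A_\ell^L k'$ for some $L\geq\ell$, and expanding $a$ and $b$ at level $L$ gives $a=\sum_{u\in V_L}(A_\ell^L k)_u c_u^{(L)}=\sum_{u\in V_L}(A_\ell^L k')_u c_u^{(L)}=b$. The only step demanding genuine care is the groundwork in the first two paragraphs: the independence of $\widetilde\alpha(A)$ from the chosen decomposition, and the reduction of an arbitrary compact open bisection of $\mc R_{\mc B,\mathrm{tail}}$ to a finite disjoint union of basic bisections $U_{w,z}$. Both follow routinely from the facts that the cylinder sets form a basis of clopens closed under the refinement $wX_{\mc B}=\bigsqcup_e(we)X_{\mc B}$ and that the basic bisections of a fixed length $\ell$ partition $\mc R_{\mc B,\ell}$; everything else is bookkeeping with the connecting matrices, along the lines of \cite[Theorem~2.9]{Putnam10}.
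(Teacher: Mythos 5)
Your proof is correct, and since the paper itself offers no in-house argument for this proposition -- it only remarks that the argument in Putnam's Theorem~2.9 (stated there for $H_0$) adapts verbatim to the type semigroup -- your write-up essentially reconstructs the argument being invoked: define $\widetilde\alpha$ on compact open sets by summing generator classes over a cylinder decomposition, check well-definedness under refinement via the relation $\phi_\ell = \phi_{\ell+1}\circ A_\ell^{\ell+1}$, verify the two defining relations of $S(\mc R_{\mc B,\mathrm{tail}})$, and then run surjectivity and injectivity directly off the description of $G_+$ as a directed colimit.

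One point worth naming explicitly, since you tucked it into a parenthetical: surjectivity (and, more quietly, the step where you replace $[wX_{\mc B}]$ by a class $c_v^{(\ell)}$ depending only on $\mathrm{end}(w)$) genuinely requires that every vertex of $\mc B$ be reachable by a path from $V_{\ell_0}$. If some $v$ had no incoming path, $\overline{e_v}$ could lie in $G_+$ without being realized as $\alpha$ of any cylinder class, and the map would fail to be onto. The paper's definition of Bratteli diagram does not impose this, but it is a standing convention in Putnam's setting (each vertex at level $\geq 1$ has at least one incoming edge) and it holds for every diagram the paper actually constructs, so your flagging of the assumption is the right thing to do. A secondary step that you sketch rather than prove -- decomposing a compact open bisection into a \emph{disjoint} union of basic bisections $U_{w_k,z_k}$ at a common length, and deducing that the $w_kX_{\mc B}$ are pairwise disjoint from the bisection property -- is also fine: once the $w_k$ are all the same length, distinct $w_k$ give disjoint cylinders, and repeated $w_k$ with distinct $z_k$ would contradict injectivity of the range map on $U$.
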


In our construction, we will make use of a particular type of graph homomorphism between Bratteli diagrams.
We informally call $T:\mc B \to \mc B'$ satisfying the hypotheses of the following lemma a \emph{shadow} homomorphism of Bratteli diagrams, and say that $\mc B'$ is a shadow of $\mc B$.

\begin{lemma}[Bratteli diagram shadows] \label{lem:Shadow}
    Let $\mc B=(V,E),\mc B'=(V',E')$ be Bratteli diagrams.
    Let $T:\mc B \to \mc B'$ be a graph homomorphism, such that:
    \begin{enumerate}
        \item $T$ maps the first level $V_{\ell_0}$ in $\mc B$ bijectively to the first level $V'_{\ell_0}$ in $\mc B'$;
        \item for $\ell \geq 1$, $T$ maps a level $V_\ell$ in $\mc B$ surjectively onto the corresponding level $V'_\ell$ in $\mc B'$; and
        \item for every vertex $v \in V$, $T$ maps the set of edges starting at $v$ bijectively to the set of edges starting at $T(v)$.
    \end{enumerate}
    Then $T$ induces a continuous injective groupoid morphism $s_{\mathrm{tail}}:\mc R_{\mc B,\mathrm{tail}} \to \mc R_{\mc B',\mathrm{tail}}$, which restricts to a homeomorphism $s_X:X_{\mc B} \to X_{\mc B'}$ of unit spaces.
\end{lemma}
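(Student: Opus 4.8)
The plan is to derive everything from a single \emph{unique path lifting} property and then carry out routine bookkeeping. First I would establish: for every vertex $v$ of $\mc B$ and every finite or infinite path $p'$ in $\mc B'$ starting at $T(v)$, there is a unique path $p$ in $\mc B$ starting at $v$ with $T(p) = p'$, where $T$ acts on paths vertex-by-vertex and edge-by-edge. This is proved by lifting one edge at a time: the first edge of $p'$ starts at $T(v)$, so by hypothesis (iii) there is a unique edge of $\mc B$ starting at $v$ that $T$ carries to it; since $T$ is a graph homomorphism, this lifted edge ends at a vertex over the second vertex of $p'$, and one iterates (taking a union in the infinite case). Together with hypothesis (i), every finite or infinite path in $\mc B'$ beginning in $V'_{\ell_0}$ has a unique $T$-preimage beginning in $V_{\ell_0}$. (Only (i) and (iii) are needed for the lemma.)

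Next I would define $s_X : X_{\mc B} \to X_{\mc B'}$ by applying $T$ to each vertex and edge of an infinite path; it is well-defined because $T$ is a graph homomorphism, and it is a bijection by the lifting property. It is continuous because, for a finite path $w'$ in $\mc B'$ from $V'_{\ell_0}$, the set $s_X^{-1}(w' X_{\mc B'})$ is the cylinder $p X_{\mc B}$, where $p$ is the unique lift of $w'$; being a continuous bijection of compact Hausdorff spaces, $s_X$ is a homeomorphism. (One also has $s_X(w X_{\mc B}) = T(w) X_{\mc B'}$, so $s_X$ is manifestly open.) The same argument shows that, for any vertex $v$ of $\mc B$, $T$ restricts to a homeomorphism $v X_{\mc B} \to T(v) X_{\mc B'}$.

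I would then set $s_{\mathrm{tail}}(w,z) \coloneqq (s_X(w), s_X(z))$ for $(w,z) \in \mc R_{\mc B,\mathrm{tail}}$. If $\mathrm{Trunc}_\ell(w) = \mathrm{Trunc}_\ell(z)$, then applying $T$ from level $\ell$ on gives $\mathrm{Trunc}_\ell(s_X(w)) = \mathrm{Trunc}_\ell(s_X(z))$, so $s_{\mathrm{tail}}$ maps $\mc R_{\mc B,\ell}$ into $\mc R_{\mc B',\ell}$, hence $\mc R_{\mc B,\mathrm{tail}}$ into $\mc R_{\mc B',\mathrm{tail}}$. Viewing $\mc R_{\mc B,\mathrm{tail}}$ as a subset of $X_{\mc B} \times X_{\mc B}$ with composition $(w,z)(z,y) = (w,y)$, it is immediate that $s_{\mathrm{tail}}$ is a groupoid morphism restricting to $s_X$ on units, and injectivity of $s_{\mathrm{tail}}$ follows from that of $s_X$. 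For continuity I would invoke the inductive limit topology: $\mc R_{\mc B,\mathrm{tail}} = \bigcup_\ell \mc R_{\mc B,\ell}$ with each $\mc R_{\mc B,\ell}$ clopen, so it suffices to see that each restriction $s_{\mathrm{tail}}|_{\mc R_{\mc B,\ell}}$ is continuous; and $\mc R_{\mc B,\ell}$ is a finite disjoint union of the clopen bisections $U_{w,z}$ over pairs $w,z \in E_{\ell_0\dots\ell}$ with common endpoint $v$, on each of which, under the identifications $U_{w,z} \cong v X_{\mc B}$ and $U_{T(w),T(z)} \cong T(v) X_{\mc B'}$ (sending $(wx,zx) \leftrightarrow x$), $s_{\mathrm{tail}}$ becomes exactly the homeomorphism $T|_{v X_{\mc B}}$ from the previous step.

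I expect the main obstacle to be the unique path lifting property — in particular the observation that although $T$ may fail to be injective on vertices above the first level, the induced map on (finite or infinite) paths is nevertheless bijective — together with the bookkeeping around the inductive limit topology needed for continuity of $s_{\mathrm{tail}}$.
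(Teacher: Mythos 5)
Your proof is correct and takes essentially the same route as the paper's: hypothesis (iii) gives unique path lifting (hence bijectivity of $T$ on finite and infinite paths), which makes $s_X$ a homeomorphism, and continuity of $s_{\mathrm{tail}}$ then rests on the fact that $s_X \times s_X$ carries each basic bisection $U_{w,z}$ into $U_{T(w),T(z)}$. The only cosmetic difference is in the final continuity step, where the paper argues pointwise by pulling back a target neighbourhood $U_{w',z'}$ and lengthening $w',z'$ so that their lifts end at a common vertex, whereas you decompose each clopen $\mc R_{\mc B,\ell}$ into its finitely many clopen bisections $U_{w,z}$ and check continuity piecewise.
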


\begin{proof}
    From the hypothesis, we can, by induction on the path length, see that $T$ is bijective on finite paths, and therefore also on infinite paths.
    This proves that $s_X$ is bijective, and continuity of $s_X$ and its inverse are clear.

    We can see that $s_X \times s_X$ maps $\mc R_{\mc B,\ell}$ into $\mc R_{\mc B',\ell}$ for all $\ell$, and therefore it induces an injective groupoid morphism $s_{\mathrm{tail}}$.
    For continuity, consider an element $(a,b) \in \mc R_{\mc B,\mathrm{tail}}$ and a basis set $U_{w',z'}$ as in \eqref{eq:TailEqBasis} for $\mc R_{\mc B',\mathrm{tail}}$ containing $(s_X\times s_X)(a,b)$, where $w',z'$ are two finite paths starting in $V_0'$ and ending at the same vertex.
    Then there are unique paths $w$, $z$ in $\mc B$ such that $w'=T(w)$ and $z'=T(z)$, and consequently $w$ and $z$ are the prefixes of $a$ and $b$ respectively.
    Since $a,b$ are tail-equivalent, and because the basis set $U_{w',z'}$ will only shrink if we make $w',z'$ longer, we may possibly lengthen $w$ and $z$ so that $a=wx$ and $b=zx$ for some infinite path $x$ in $\mc B$.
    Then we see that $(a,b) \in U_{w,z}$ and $(s_X \times s_X)(U_{w,z}) \subseteq U_{w',z'}$, as required.
\end{proof}

\section{Illustrative examples}
\label{sec:Examples}
To illustrate our construction, we will demonstrate it with two examples.
We will not prove all claims in these examples, as we do this carefully in the general case.

\subsection{Obtaining the homology group \(\Z[\frac12] \oplus \Z/ 2\Z\)}
First, we begin with a construction to realize the group $\Z[\frac12] \oplus \Z/ 2\Z$ as its zeroth homology group.
(Note that the coincidence of $2$-torsion and the $2$-divisibility neither enables nor even simplifies the construction here.)

The Bratteli diagram we will use is $\mathcal B$ on the left below; $\mathcal B_{2^\infty}$ is on the right, which is a shadow of $\mathcal B$ in the sense of Lemma~\ref{lem:Shadow}
\begin{equation} \begin{tikzcd}[column sep=2mm,row sep=0mm]
    & \mc B & && \mc B_{2^\infty} \\
    & \bullet \ar[ddl,no head] \ar[ddl,shift right, no head, swap] \ar[ddr,no head,swap] \ar[ddr,shift left,no head] &
    &\qquad& \bullet \ar[dd,no head,shift right=0.5] \ar[dd,shift right=1.5, no head] \ar[dd,no head,shift left=0.5] \ar[dd,no head,shift left=1.5] \\ \ \\
    a \ar[ddrr,no head,pos=0.2] \ar[ddrr,shift right,no head,swap,pos=0.2] && b \ar[ddll,no head] \ar[dd,no head]
    &&\bullet \ar[dd,no head,shift right=0.5]\ar[dd,no head,shift left=0.5] \\ \  \\
    a \ar[ddrr,no head,pos=0.2] \ar[ddrr,shift right,no head,swap,pos=0.2] && b \ar[ddll,no head] \ar[dd,no head]&
    &\bullet \ar[dd,no head,shift right=0.5]\ar[dd,no head,shift left=0.5] \\ \ \\
    \phantom{a} && \phantom{b} & & \phantom{\bullet} \\
    & \vdots & &
    &\vdots
\end{tikzcd} \end{equation}
We reuse vertex labels to avoid notational clutter.
Likewise we have not included edge labels, but when describing paths we use the labels $1$ and $2$ whenever there are two edges between vertices.

Tail-equivalence on $\mc B_{2^\infty}$ gives $\Z[\frac12]_+$ as the type semigroup.
When we project a path from $\mathcal B$ to $\mc B_{2^\infty}$, what is forgetten is, essentially, the labels on the vertices ($a$ and $b$).
We would like $a$-cylinders (that is, ones coming from a path that ends at an $a$ vertex) to have class $0$ is the torsion component ($\Z/2\Z$), and $b$-cylinders to have class $1$.
Note that an $a$-cylinder decomposes as two $b$-cylinders, corresponding to the relation $0=1+1$ in $\Z/2\Z$, while a $b$-cylinder decomposes as the union of an $a$- and a $b$-cylinder, corresponding to $1=0+1$.
Tail-equivalence alone will not impose all the relations required to get $\Z[\frac12]\oplus \Z/2\Z$ as the zeroth homology group (indeed, dimension groups are always torsion-free).
We shall augment the tail-equivalence relation, adding a relation that makes the union of two $a$-cylinders equivalent to the union of two $b$-cylinders \emph{at the same level}.

Set
\begin{equation} \begin{split} A&\coloneqq (\edge{\bullet}1{a}X_{\mc B}) \sqcup (\edge{\bullet}2{a}X_{\mc B}), \\
B&\coloneqq (\edge{\bullet}1{b}X_{\mc B}) \sqcup (\edge{\bullet}2{b}X_{\mc B}). \end{split} \end{equation}
We shall define a bijective map $\varphi:A \to B$ by the following diagram, where each box denotes a cylinder defined by the labelled path:
\begin{equation}\begin{split} \includegraphics{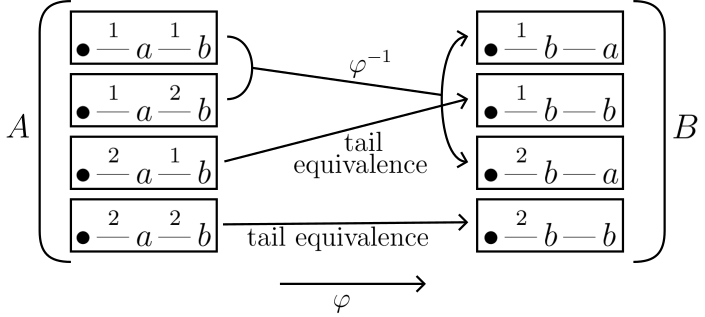}\end{split} \end{equation}
This is a recursive definition; as we now explain.
The subset $(\halfedge{\bullet}{1}\edge{a}{1}{b}X_{\mc B}) \sqcup (\halfedge{\bullet}{1}\edge{a}{2}{b}X_{\mc B})$ identifies canonically with $B$ -- both are the union of two $b$-cylinders.
Likewise, the subset $\halfedge{\bullet}{1}\edge{b}{}{a}X_{\mc B} \sqcup \halfedge{\bullet}{2}\edge{b}{}{b}X_{\mc B}$ identifies with $A$.
We ask that the restriction of $\varphi$ to these pieces, after making these identifications, is equal to $\varphi^{-1}$.
This rule determines $\varphi$ at all points except $\halfedge{\bullet}{1}\halfedge{a}{1}\halfedge{b}{}\halfedge{a}{1}\halfedge{b}{}\cdots$, and determines $\varphi^{-1}$ at all points except $\halfedge{\bullet}{1}\halfedge{b}{}\halfedge{a}{1}\halfedge{b}{}\halfedge{a}{1}\cdots$; we finally define
\begin{equation} \varphi(\halfedge{\bullet}{1}\halfedge{a}{1}\halfedge{b}{}\halfedge{a}{1}\cdots) \coloneqq \halfedge{\bullet}{1}\halfedge{b}{}\halfedge{a}{1}\halfedge{b}{}\halfedge{a}{1}\cdots.\end{equation}

The (topological) equivalence relation $\mc R$ is then generated by tail-equivalence together with $\varphi$.

In order to compute $S(\mc R)$ (and thereby $H_0(\mc R)$), first note that there is a homomorphism $\beta:S(\mc R) \to \Z/2\Z$ which sends classes of $a$-cylinders to $0$ and classes of $b$-cylinders to $1$; the existence of this homomorphism amounts to checking that tail-equivalence, $\varphi$, and decomposition of cylinders respect the relations in $\Z/2\Z$.
We also note that the shadow homeomorphism $s_X:X_{\mc B} \to X_{\mc B_{2^\infty}}$ coming from Lemma~\ref{lem:Shadow} induces a map $S(\mc R) \to S(\mc R_{\mc B_{2^\infty},\mathrm{tail}}) \cong \Z[\frac12]_+$, using Proposition~\ref{prop:AbstractHomomorphism}.
To verify the hypothesis of this proposition, for any cylinder $A \subseteq X_{\mc B}$, the images of $A$ and $\varphi(A)$ in $X_{\mc B_{2^\infty}}$ can be decomposed into sub-cylinders and matched up by tail-equivalence.
This is a bit subtle, however, as the shadow of $\varphi$ is not captured by tail-equivalence at individual points.
Indeed, $s_X(\halfedge{\bullet}{1}\halfedge{a}{1}\halfedge{b}{}\halfedge{a}{1}\cdots)$ is not tail-equivalent to $s_X(\halfedge{\bullet}{1}\halfedge{b}{}\halfedge{a}{1}\halfedge{b}{}\halfedge{a}{1}\cdots)$, though they are equivalent via $\varphi$.

The final part of the computation (whose details we will fully omit in this section) is to check that if $A,B$ are clopen sets in $X_{\mc B}$ such that $[s(A)]=[s(B)]$ in $\Z[\frac12]_+$ and $\beta([A])=\beta([B])$ in $\Z/2\Z$ then $[A]=[B]$ in $S(\mc R)$.

\subsection{Obtaining the homology group $(\Z+\Z\frac{1+\sqrt5}2) \oplus \Z/2\Z$ via the Fibonacci dimension group}

The ``Fibonacci'' dimension group is $D\coloneqq \Z+\Z\frac{1+\sqrt5}2$, with $D_+\coloneqq D \cap [0,\infty)$.
It can be realized as a stationary inductive limit of $\Z^2$ with the connecting map
\begin{equation} \begin{bmatrix}
    1 & 1 \\ 1 & 0
\end{bmatrix}.
\end{equation}
In order to obtain enough edges for our construction, we use a telescoped system, with connecting maps
\begin{equation}
\begin{bmatrix}
    1 & 1 \\ 1 & 0
\end{bmatrix}^3 = \begin{bmatrix}
    3 & 2 \\ 2 & 1
\end{bmatrix}.
\end{equation}
This system gives us the shadow Bratteli diagram on the right below.
\begin{equation} \begin{tikzcd}[column sep=9mm,row sep=0mm]
    &\mc B &&&& \mc B_D & \\
    &\bullet \ar[dddl,no head] \ar[ddd,shift left=0.5,no head]\ar[ddd,shift right=0.5,no head] \ar[dddr,shift left=0.5,no head]\ar[dddr,shift right=0.5,no head] && && \bullet \ar[dddl,no head] \ar[dddr,shift left=0.5,no head]\ar[dddr,shift right=0.5,no head]\ar[dddr,shift left=1.5,no head]\ar[dddr,shift right=1.5,no head]
    \\ \ \\ \ \\
    v_a \ar[ddd,no head] \ar[dddr,shift right=0.5,no head]\ar[dddr,shift left=0.5,no head] & 
    w_a \ar[dddl,shift right=0.5,no head]\ar[dddl,shift left=0.5,no head] \ar[ddd,no head] \ar[dddr,shift right=0.5,no head]\ar[dddr,shift left=0.5,no head] &
    w_b \ar[dddll,shift right=0.5,no head]\ar[dddll,shift left=0.5,no head] \ar[dddl,shift right=0.5,no head]\ar[dddl,shift left=0.5,no head] \ar[ddd,no head] &\quad &
    v \ar[ddd,no head] \ar[dddrr,shift right=0.5,no head]\ar[dddrr,shift left=0.5,no head] && w \ar[dddll,shift right=0.5,no head]\ar[dddll,shift left=0.5,no head] \ar[ddd,shift right,no head]\ar[ddd,shift left,no head]\ar[ddd,no head]\\ \ \\ \ \\
    v_a \ar[ddd,no head] \ar[dddr,shift right=0.5,no head]\ar[dddr,shift left=0.5,no head] & 
    w_a \ar[dddl,shift right=0.5,no head]\ar[dddl,shift left=0.5,no head] \ar[ddd,no head] \ar[dddr,shift right=0.5,no head]\ar[dddr,shift left=0.5,no head] &
    w_b \ar[dddll,shift right=0.5,no head]\ar[dddll,shift left=0.5,no head] \ar[dddl,shift right=0.5,no head]\ar[dddl,shift left=0.5,no head] \ar[ddd,no head] &\quad &
    v \ar[ddd,no head] \ar[dddrr,shift right=0.5,no head]\ar[dddrr,shift left=0.5,no head] && w \ar[dddll,shift right=0.5,no head]\ar[dddll,shift left=0.5,no head] \ar[ddd,shift right,no head]\ar[ddd,shift left,no head]\ar[ddd,no head]\\ \ \\ \ \\
    v_a & w_a & w_b && v && w \\
    & \vdots &&&& \vdots
\end{tikzcd} \end{equation}
Set
\begin{equation} \begin{split} A&\coloneqq (\edge{\bullet}1{w_a}X_{\mc B}) \sqcup (\edge{\bullet}2{w_a}X_{\mc B}), \\
B&\coloneqq (\edge{\bullet}1{w_b}X_{\mc B}) \sqcup (\edge{\bullet}2{w_b}X_{\mc B}). \end{split} \end{equation}
Similarly to the previous example, we define a bijective map $\varphi:A \to B$ recursively, and mostly using tail-equivalence.
In terms of cylinders at the second level (i.e., labelled by paths of length $2$), we have
\begin{itemize}
    \item $A$ is made up of 4 $v_a$-cylinders, 2 $w_a$-cylinders, and 4 $w_b$-cylinders, and
    \item $B$ is made up of 4 $v_a$-cylinders, 4 $w_a$-cylinders, and 2 $w_b$-cylinders.
\end{itemize}
By matching up cylinders in $A$ with like cylinders in $B$ using tail-equivalence, we can define $\varphi$ on all of $A$ except
\begin{equation} \halfedge{\bullet}1\edge{w_a}1{w_b}X_{\mc B} \sqcup \halfedge{\bullet}1\edge{w_a}2{w_b}X_{\mc B}, \end{equation}
and $\varphi^{-1}$ is defined on all of $B$ except
\begin{equation} \halfedge{\bullet}1\edge{w_b}1{w_a}X_{\mc B} \sqcup \halfedge{\bullet}2\edge{w_b}1{w_a}X_{\mc B}. \end{equation}
These remaining parts identify with copies of $B$ and $A$ respectively, and we use recursion to continue to define $\varphi$.
This rule determines $\varphi$ at all points except $\halfedge{\bullet}1\halfedge{w_a}1\halfedge{w_b}1\halfedge{w_a}1\halfedge{w_b}1\cdots$ and determines $\varphi^{-1}$ at all points except $\halfedge{\bullet}1\halfedge{w_b}1\halfedge{w_a}1\halfedge{w_b}1\halfedge{w_a}1\cdots$; we finally define
\begin{equation}  \varphi(\halfedge{\bullet}1\halfedge{w_a}1\halfedge{w_b}1\halfedge{w_a}1\halfedge{w_b}1\cdots)\coloneqq \halfedge{\bullet}1\halfedge{w_b}1\halfedge{w_a}1\halfedge{w_b}1\halfedge{w_a}1\cdots. \end{equation}
The (topological) equivalence relation $\mc R$ is then generated by tail-equivalence together with $\varphi$.

Similarly to the previous example, we have a homomorphism $\beta:S(\mc R) \to \Z/2\Z$ which sends classes of $v_a$- and $w_a$-cylinders to $0$ and classes of $w_b$-cylinders to $1$.
Also, we again have a shadow homomorphism $s_X:X_{\mc B} \to X_{\mc B_D}$, and for any cylinder $A$, $s_X(A)$ is equivalent to $s_X(\varphi(A))$ via tail-equivalence.
So, by Proposition~\ref{prop:AbstractHomomorphism},
$s_X$ induces a map $S(\mc R) \to S(\mc R_{\mc B_D,\mathrm{tail}}) \cong (\Z+\Z\frac{1+\sqrt5}2)\cap [0,\infty)$.
With the final details omitted, these two maps combine to give an isomorphism 
\begin{equation} S(\mc R) \cong \{0\} \cup \Big[(\Z+\Z\tfrac{1+\sqrt5}2) \cap (0,\infty) \oplus \Z/2\Z\Big]. \end{equation}

In the general case, beginning with a Bratteli diagram $\mc B_D$, we will build a new Bratteli diagram $\mc B$ which has $\mc B_D$ as a shadow, by picking a vertex at each level that is appropriately split, creating an ``a'' and ``b'' copy of it (in the above example, the vertex $w$ was duplicated).
A (partial) homeomorphism $\varphi$ will enforce a relation that a suitable number of $a$-cylinders is equivalent to the same number of $b$-cylinders.
A general procedure for obtaining $\mc B$ from $\mc B_D$ is described in Section~\ref{sec:Splitting}, and one for obtaining the homeomorphism $\varphi$ is in Section~\ref{sec:TEBR}.

\section{A Bratteli diagram construction}
\label{sec:Splitting}

In this section we give a procedure for producing a new Bratteli diagram that has a given input Bratteli diagram as its shadow, obtained by ``splitting'' a vertex at each level.

This diagram demonstrates the splitting of a vertex $w_a^\ell$ at the $\ell^\text{th}$ level.

\begin{equation}        \begin{tikzcd}
        \bullet \arrow[rd, shift right] \arrow[rd, red] & \bullet \arrow[d, red] \arrow[d, shift right] \arrow[d, shift left, red] & \bullet \arrow[ld, red] \arrow[ld, shift right] \\
        & w_a^\ell \arrow[ld] \arrow[d] \arrow[rd] \arrow[d, shift left] \arrow[rd, shift right] & \\
        \bullet & \bullet & \bullet
        \end{tikzcd}
        \quad\to\quad
        \begin{tikzcd}
        \bullet \arrow[rd] \arrow[rrrd, red] & \bullet \arrow[d] \arrow[rrd, red] \arrow[rrd, shift right, red] & \bullet \arrow[ld] \arrow[rd, red] & \\
        & w_a^\ell \arrow[ld] \arrow[d] \arrow[d, shift right] \arrow[rd] \arrow[rd, shift right] &  & w_b^\ell \arrow [llld, blue] \arrow[lld, blue] \arrow[lld, blue, shift right] \arrow[ld, blue] \arrow[ld, blue, shift right] \\
        \bullet & \bullet & \bullet &
        \end{tikzcd} \end{equation}

Notice that the \textit{in-edges} are split up (indicated in red), whereas the \textit{out-edges} are duplicated (indicated in blue). There is a choice of which in-edges to split; in the above diagram, this choice is indicated by colouring these edges red in the initial (left) diagram.
(Essentially, what matters is how many edges to split.)
We start from the top of the Bratteli diagram and proceed downwards, splitting nodes at levels $1$ then $2$ and so on.

Before formally defining the splitting, we show an example of it in action.
Consider the stationary Bratteli diagram corresponding to the connecting map $\begin{bmatrix}3 & 2 \\ 2 & 1\end{bmatrix}$. We split the vertices $w_a^1,w_a^2,$ and $w_a^3$, and use boldface to emphasize the node being split at each step.
Unlike the above diagram, we do not colour edges red before they have been split (to avoid ambiguity in the middle steps).

\begin{equation}\begin{split}
    \begin{tikzcd}[ampersand replacement=\&]
    \& \& \bullet \arrow[ddll, shift right] \arrow[ddll] \arrow[dd] \\ \& \& \\
    w_a^1 \arrow[dd, shift right] \arrow[dd] \arrow[dd, shift left] \arrow[rrdd, shift right] \arrow[rrdd] \&  \& v^1 \arrow[lldd, shift right] \arrow[lldd] \arrow[dd] \\ \&  \& \\
    v^2 \arrow[dd, shift right] \arrow[dd, shift left] \arrow[dd] \arrow[rrdd, shift left] \arrow[rrdd]  \&  \& w_a^2 \arrow[lldd, shift left] \arrow[lldd] \arrow[dd]  \\ \&  \& \\ w_a^3 \&  \& v^3
    \end{tikzcd}
    \quad&\to\quad
    \begin{tikzcd}[ampersand replacement=\&]
    \& \& \bullet \arrow[ddll] \arrow[ddrr,red] \arrow[dd] \\ \& \& \\
    \bf{w_a^1} \arrow[dd, shift right] \arrow[dd] \arrow[dd, shift left] \arrow[rrdd, shift right] \arrow[rrdd] \&  \& v^1 \arrow[lldd, shift right] \arrow[lldd] \arrow[dd] \& \& w_b^1 \arrow[lllldd, shift left,blue] \arrow[lllldd,blue] \arrow[lllldd, shift right,blue] \ar[lldd,shift right, blue] \ar[lldd,blue] \\ \& \& \\
    v^2 \arrow[dd, shift right] \arrow[dd, shift left] \arrow[dd] \arrow[rrdd, shift left] \arrow[rrdd]  \&  \& w_a^2\arrow[lldd, shift left] \arrow[lldd] \arrow[dd]  \\ 
    \&  \& \\ 
    w_a^3 \&  \& v^3
    \end{tikzcd}
    \\
    \to
    \begin{tikzcd}[ampersand replacement=\&]
    \& \& \bullet \arrow[ddrr] \arrow[ddll] \arrow[dd] \\ \& \& \\
    w_a^1 \arrow[dd, shift right] \arrow[dd] \arrow[dd, shift left] \arrow[rrrrdd, shift right,red] \arrow[rrrrdd,red] \&  \& v^1 \arrow[lldd, shift right] \arrow[lldd] \arrow[dd] \& \& w_b^1 \arrow[lllldd, shift left] \arrow[lllldd] \arrow[lllldd, shift right] \ar[lldd,shift right] \ar[dd,red] \\ \& \& \\
    v^2 \arrow[dd, shift right] \arrow[dd, shift left] \arrow[dd] \arrow[rrdd, shift left] \arrow[rrdd]  \&  \& \bf{w_a^2} \arrow[lldd, shift left] \arrow[lldd] \arrow[dd] \&\& w_b^2 \ar[lllldd,shift right,blue] \ar[lllldd,blue] \ar[lldd,blue] \\ 
    \&  \& \\ 
    w_a^3 \&  \& v^3
    \end{tikzcd}
    \quad&\to\quad
    \begin{tikzcd}[ampersand replacement=\&]
    \& \& \bullet \arrow[ddrr] \arrow[ddll] \arrow[dd] \\ \& \& \\
    w_a^1 \arrow[dd, shift right] \arrow[dd] \arrow[dd, shift left] \arrow[rrrrdd,  shift left] \arrow[rrrrdd] \& \& v^1 \arrow[lldd, shift left] \arrow[lldd] \arrow[dd] \& \& w_b^1 \arrow[lllldd, shift left] \arrow[lllldd] \arrow[lllldd, shift right] \arrow[lldd, shift right] \\ 
    \& \& \& \& \\
    v^2 \arrow[rrdd, shift right] \arrow[rrdd] \arrow[dd] \arrow[rrrrdd, red] \arrow[rrrrdd, red, shift left] \& \& w_a^2 \arrow[dd] \arrow[rrdd, red] \arrow[rrdd, red, shift left] \& \& w_b^2  \arrow[lldd] \arrow[lllldd] \arrow[dd,red] \\
    \& \& \& \& \\ \bf{w_a^3} \& \& v^3 \& \& w_b^3  
    \end{tikzcd}
\end{split}\end{equation}

For the general construction, the input consists of:
\begin{itemize}
    \item a Bratteli diagram $\mathcal B=(V,E)$ (starting at level $\ell_0=0$);
    \item a choice of vertex $w_a^\ell \in V_\ell$ to split at each level $\ell\geq 1$ --- the duplicated version of $w_a^\ell$ will be denoted $w_b^\ell$; and
    \item a choice of how to split the edges: for each $\ell\geq 0$, sets $F(v^{\ell})\subseteq E(v^{\ell},w_a^{\ell+1})$ for each $v^{\ell} \in V_{\ell}$ and $F(w_b^{\ell}) \subseteq E(w_a^{\ell},w_a^{\ell+1})$. These sets indicate the edge labels to move to the split vertex $w_b^{\ell+1}$, while the other edges stay with $w_a^{\ell+1}$.
    (Note that the edges from $w_b^{\ell}$ to $w_a^{\ell+1}$ and $w_b^{\ell+1}$ are made up of copies of the edges from $w_a^{\ell}$ to $w_a^{\ell+1}$; this is why $F(w_b^{\ell})$ is a subset of $E(w_a^{\ell},w_a^{\ell+1})$.)
\end{itemize}

Given inputs as above, we define the Bratteli diagram $\mathcal C$ 
to have vertex sets $V_{\mc C}^0\coloneqq V_0$ and $V_{\mc C}^\ell \coloneqq V_\ell \sqcup \{w_b^\ell\}$ for $\ell\geq 1$, and edges as follows:
\begin{itemize}
    \item for $v^{\ell} \in V_{\ell}$, $v^{\ell+1} \in V_{\ell+1}\setminus \{w_a^{\ell+1}\}$, and $\edge{v^{\ell}}e{v^{\ell+1}} \in E$, we have the edge $\edge{v^{\ell}}e{v^{\ell+1}}$ in $E_{\mc C}$;
    \item for $v^{\ell} \in V_{\ell}$ and $e \in F(v^{\ell})$, we have the edge $\edge{v^{\ell}}e{w_b^{\ell+1}}$ in $E_{\mc C}$;
    \item for $v^{\ell} \in V_{\ell}$ and $e \in E(v^{\ell},w_a^{\ell+1})\setminus F(v^{\ell})$, we have the edge $\edge{v^{\ell}}e{w_a^{\ell+1}}$ in $E_{\mc C}$;
    \item for $v^{\ell+1} \in V_{\ell+1}\setminus \{w_a^{\ell+1}\}$ and $\edge{w_a^{\ell}}e{v^{\ell+1}}\in E$, we have the edge $\edge{w_b^{\ell}}e{v^{\ell+1}}$ in $E_{\mc C}$;
    \item for $e \in F(w_b^{\ell})$, we have the edge $\edge{w_b^{\ell}}e{w_b^{\ell+1}}$ in $E_{\mc C}$; and
    \item for $e \in E(w_a^{\ell},w_a^{\ell+1})\setminus F(w_b^{\ell})$, we have the edge $\edge{w_b^{\ell}}e{w_a^{\ell+1}}$ in $E_{\mc C}$.
\end{itemize}
Diagramatically we have:
\begin{equation}
    \begin{tikzcd}[ampersand replacement=\&]
    \&\&\&\& v^{\ell} \arrow[ddllll, "E(v^{\ell}{,}v^{\ell+1})" description] \arrow[dd, "F(v^{\ell})" description] \arrow[ddrrrr, "E(v^{\ell}{,}w_a^{\ell+1})\setminus F(v^{\ell})" description] \\ \\
    v^{\ell+1} \&\&\&\& w_b^{\ell+1} \&\&\&\& w_a^{\ell+1}
    \end{tikzcd}
\end{equation}
for $v^{\ell} \in V_{\ell}$ (which includes the case $v^{\ell}=w_a^{\ell}$) and $v^{\ell+1} \in V_{\ell+1}\setminus \{w_a^{\ell+1}\}$; and 
\begin{equation}
    \begin{tikzcd}[ampersand replacement=\&]
    \&\&\&\& w_b^{\ell} \arrow[ddllll, "E(w_a^{\ell}{,}v^{\ell+1})" description] \arrow[dd, "F(w_b^{\ell})" description] \arrow[ddrrrr, "E(w_a^{\ell}{,}w_a^{\ell+1})\setminus F(w_b^{\ell})" description] \\ \\
    v^{\ell+1} \&\&\&\& w_b^{\ell+1} \&\&\&\& w_a^{\ell+1}
    \end{tikzcd}
\end{equation}
for $v^{\ell+1} \in V_{\ell+1}\setminus \{w_a^{\ell+1}\}$.

\begin{proposition}
\label{prop:SplittingShadow}
    Let $\mc B, (w_a^\ell)_\ell, F,$ and $\mc C$ 
be as above.
    Then there is a shadow graph homomorphism $T:\mc C \to \mc B$ in the sense of Lemma~\ref{lem:Shadow} given by
    \begin{equation} \begin{split}
        T(v)&\coloneqq \begin{cases} v,\quad &v \in V; \\ w_a^\ell, \quad &v=w_b^\ell,\end{cases} \\
    T(\edge{v}{e}{w}) &\coloneqq \edge{T(v)}{e}{T(w)}.
    \end{split} \end{equation}
\end{proposition}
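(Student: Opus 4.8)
The plan is to verify, one by one, the three hypotheses of Lemma~\ref{lem:Shadow} for the map $T:\mc C\to\mc B$, with $\ell_0=0$. First one checks that $T$ is a well-defined graph homomorphism: it preserves the grading, since the extra vertex $w_b^\ell$ of $\mc C$ sits in level $\ell$ and is sent to $w_a^\ell\in V_\ell$; and by the defining formula $T(\edge{v}{e}{w})=\edge{T(v)}{e}{T(w)}$ it automatically respects sources and targets, provided the right-hand side is genuinely an edge of $\mc B$. The latter is checked by running through the six bullet points defining $E_{\mc C}$: for instance an edge $\edge{v^\ell}{e}{w_b^{\ell+1}}$ of $\mc C$ has $e\in F(v^\ell)\subseteq E(v^\ell,w_a^{\ell+1})$, so its image $\edge{v^\ell}{e}{w_a^{\ell+1}}$ is an edge of $\mc B$; the remaining cases are identical in spirit.

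Hypotheses (i) and (ii) of Lemma~\ref{lem:Shadow} are immediate: $V_{\mc C}^0=V_0$ and $T$ is the identity there, so it is a bijection on the first level; for $\ell\geq 1$, $T$ fixes every vertex of $V_\ell\subseteq V_{\mc C}^\ell$ and sends the one extra vertex $w_b^\ell$ to $w_a^\ell\in V_\ell$, hence maps level $\ell$ surjectively onto $V_\ell$.

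The only hypothesis that takes an argument is (iii), the bijection on out-edges, and here the point is simply that the splitting was designed to preserve out-degrees. Fix a vertex $v^\ell\in V_\ell$ (the case $v^\ell=w_a^\ell$ is allowed), so $T(v^\ell)=v^\ell$. Sorted by target, the out-edges of $v^\ell$ in $\mc C$ are: the edges $\edge{v^\ell}{e}{v^{\ell+1}}$ with $v^{\ell+1}\neq w_a^{\ell+1}$, which $T$ fixes; the edges labelled by $F(v^\ell)$ going to $w_b^{\ell+1}$; and the edges labelled by $E(v^\ell,w_a^{\ell+1})\setminus F(v^\ell)$ going to $w_a^{\ell+1}$. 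The latter two families are sent by $T$ to edges $v^\ell\to w_a^{\ell+1}$ in $\mc B$; this is injective because the two label sets are disjoint, and surjective because their union is $E(v^\ell,w_a^{\ell+1})$. Together with the first family this exhibits a bijection from the out-edges of $v^\ell$ in $\mc C$ onto those in $\mc B$. For the vertex $w_b^\ell$, with $T(w_b^\ell)=w_a^\ell$, the out-edges are the copies $\edge{w_b^\ell}{e}{v^{\ell+1}}$ of the edges $\edge{w_a^\ell}{e}{v^{\ell+1}}$ with $v^{\ell+1}\neq w_a^{\ell+1}$, the edges labelled by $F(w_b^\ell)$ to $w_b^{\ell+1}$, and the edges labelled by $E(w_a^\ell,w_a^{\ell+1})\setminus F(w_b^\ell)$ to $w_a^{\ell+1}$; since $F(w_b^\ell)\subseteq E(w_a^\ell,w_a^{\ell+1})$, the same disjointness-and-union bookkeeping shows $T$ maps these bijectively onto the out-edges of $w_a^\ell$ in $\mc B$. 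No label collisions cause trouble here: two edges of $\mc C$ sharing a label but with distinct targets are distinct, and their $T$-images in $\mc B$ again have distinct targets.

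I do not anticipate any real obstacle. The entire content is the bookkeeping in (iii), and its engine is the observation that $F$ records exactly which in-edges of $w_a^{\ell+1}$ are rerouted to $w_b^{\ell+1}$, while every out-edge of $w_a^\ell$ is duplicated onto $w_b^\ell$, so that collapsing $w_b^\bullet$ back onto $w_a^\bullet$ restores the original multiset of out-edges at every vertex.
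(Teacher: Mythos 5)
Your proof is correct and takes essentially the same approach as the paper: a direct verification that $T$ is a graph homomorphism, that it matches levels as required by (i) and (ii), and that at each vertex of $\mc C$ the out-edges (split by target into the three families) are mapped bijectively onto the out-edges of the image vertex. The bookkeeping in your case analysis for condition (iii) parallels the paper's bullet-by-bullet enumeration.
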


\begin{proof}
It is immediate that the formula for $T$ provides a homomorphism of Bratteli diagrams, and $T$ maps $V_{\mc C}^\ell$ surjectively to $V_\ell$ and $v_{\mc C}^0$ bijectively to $V_0$.

Next consider a vertex in $\mc C$.

If the vertex is $v^{\ell} \in V_{\ell}$, then $T(v^{\ell})=v^{\ell}$, and the edges starting at $v^{\ell}$ in $\mc C$ comprise:
\begin{itemize}
\item $\edge{v^{\ell}}e{v^{\ell+1}}$, for any $v^{\ell+1} \in V_{\ell+1}\setminus\{w_a^{\ell+1}\}$ and $\edge{v^{\ell}}e{v^{\ell+1}}\in E$; these map to $\edge{v^{\ell}}e{v^{\ell+1}}\in E$;
\item $\edge{v^{\ell}}e{w_b^{\ell+1}}$, for $e \in F(v^{\ell})$; these map to $\edge{v^{\ell}}e{w_a^{\ell+1}}\in E$; and
\item $\edge{v^{\ell}}e{w_a^{\ell+1}}$, for $e \in E(v^{\ell},w_a^{\ell+1})\setminus F(v^{\ell})$; these map to $\edge{v^{\ell}}e{w_a^{\ell+1}}\in E$.
\end{itemize}
From the first point, we see that $T$ maps edges from $v^{\ell}$ to $v^{\ell+1}$ in $\mc C$ bijectively to edges from $v^{\ell}$ to $v^{\ell+1}$ in $\mc B$ (for $v^{\ell+1} \neq w_a^{\ell+1}$).
From the second point, we see that $T$ maps edges from $v^{\ell}$ to $w_a^{\ell+1},w_b^{\ell+1}$ in $\mc C$ bijectively to edges from $v^{\ell}$ to $w_a^{\ell+1}$ in $\mc B$.

If the vertex is $w_b^{\ell}$, then $T(w_b^{\ell})=w_a^{\ell}$, and the edges starting at $w_b^{\ell}$ in $\mc C$ comprise:
\begin{itemize}
\item $\edge{w_b^{\ell}}e{v^{\ell+1}}$ for any $v^{\ell+1} \in V_{\ell+1}\setminus \{w_a^{\ell+1}\}$ and $\edge{w_a^{\ell}}e{v^{\ell+1}}\in E$; these map to $\edge{w_a^{\ell}}e{v^{\ell+1}}\in E$;
\item $\edge{w_b^{\ell}}e{w_b^{\ell+1}}$ in $E_{\mc C}$, for $e \in F(w_b^{\ell})$; these map to $\edge{w_a^{\ell}}e{w_a^{\ell+1}}\in E$; and
\item $\edge{w_b^{\ell}}e{w_a^{\ell+1}}$ in $E_{\mc C}$, for $e \in E(w_a^{\ell},w_a^{\ell+1})\setminus F(w_b^{\ell})$; these map to $\edge{w_a^{\ell}}e{w_a^{\ell+1}}\in E$.
\end{itemize}
From the first point, we see that $T$ maps edges from $w_b^{\ell}$ to $v^{\ell+1}$ in $\mc C$ bijectively to edges from $w_a^{\ell}$ to $v^{\ell+1}$ in $\mc B$ (for $v^{\ell+1} \neq w_a^{\ell+1}$).
From the second point, we see that $T$ maps edges from $v^{\ell}$ to $w_a^{\ell+1},w_b^{\ell+1}$ in $\mc C$ bijectively to edges from $v^{\ell}$ to $w_a^{\ell+1}$ in $\mc B$.

Altogether, $T$ satisfies the hypotheses of Lemma~\ref{lem:Shadow}.
\end{proof}

\section{A recipe for a path space homeomorphism}
\label{sec:TEBR}

\begin{definition}
\label{def:TEBR}
Let $\mc B$ be a Bratteli diagram and let $A,B\subseteq E_{0\dots\ell}$.
A \emph{recipe} is $(A,B,\rho)$ defined recursively, where $\rho=((w_1,z_1),\dots,(w_m,z_m),(A_1,B_1,\rho_1),\dots,(A_n,B_n,\rho_n))$ (for some $m,n$) and:
\begin{enumerate}
    \item $w_i,z_i \in E_{0,\dots,\ell+1}$ are two paths ending at the same vertex, for each $i=1,\dots,m$;
    \item $A_i,B_i \subseteq E_{0,\dots,\ell+1}$ and $(A_i,B_i,\rho_i)$ is a recipe, for each $i=1,\dots,n$; and
    \item $AE_{\ell,\ell+1} = \{w_1\}\sqcup \cdots \sqcup\{w_m\}\sqcup A_1\sqcup\cdots\sqcup A_n$ and $BE_{\ell,\ell+1} = \{z_1\}\sqcup \cdots \sqcup\{z_m\}\sqcup B_1\sqcup\cdots\sqcup B_n$.
\end{enumerate}
\end{definition}

The idea is that the data in a recipe $(A,B,\rho)$ aims to define a function $AX_{\mc B} \to BX_{\mc B}$, by doing tail-equivalence for $w_iX_{\mc B} \to z_iX_{\mc B}$ and recursion for $A_iX_{\mc B} \to B_iX_{\mc B}$; we formalize this in part \ref{it:TEBRfits} of the following definition.

\begin{definition}
\label{def:TEBRmore}
Let $\mc B$ be a Bratteli diagram, let $(A,B,\rho)$ be a recipe as in Definition~\ref{def:TEBR}, where $A,B\subseteq E_{0\dots\ell}$.
\begin{enumerate}
    \item \label{it:TEBRdescendant}
    The \emph{descendant recipes} are defined recursively to be $(A,B,\rho)$ as well as all descendant recipes of each $(A_i,B_i,\rho_i)$.
    \item \label{it:TEBRconstrained}
    For a sequence $(N_k)_{k=\ell}^\infty$, we recursively say $(A,B,\rho)$ is \emph{$(N_k)_{k=\ell}^\infty$-constrained} if all paths in $A$ agree on the first $N_\ell$ edges and all paths in $B$ agree on the first $N_\ell$ edges (i.e., there exists $w,z \in E_{0\dots N_{\ell}}$ such that $A\subseteq wE_{\dots\ell}$ and $B\subseteq zE_{\dots\ell}$), and all descendant recipes of $(A,B,\rho)$ satisfy the same condition.
    \item \label{it:TEBRfits} Recursively, we define a function $\varphi:AX_{\mc B}\to BX_{\mc B}$ to \emph{fit} $(A,B,\rho)$ if:
    \begin{enumerate}
        \item For $i=1,\dots,m$ and $w_ix \in w_iX_{\mc B}$, $\varphi(w_ix)=z_ix$, and
        \label{it:Fit.1}
        \item For $i=1,\dots,n$, $\varphi(A_iX_{\mc B})=B_iX_{\mc B}$ and $\varphi|_{A_iX_{\mc B}}^{B_iX_{\mc B}}$ fits $(A_i,B_i,\rho_i)$ (where $\varphi|_{A_iX_{\mc B}}^{B_iX_{\mc B}}$ denotes the corestriction to $B_iX_{\mc B}$ of the restriction to $A_iX_{\mc B}$ of $\varphi$).
        \label{it:Fit.2}
    \end{enumerate}
\end{enumerate}
\end{definition}

We now give two examples of recipes and functions which fit them, on the space of infinite words in $\{0,1\}$.

\begin{example}\label{eg:SomeTEBRs}
Let $\mc B$ be the Bratteli diagram
\begin{equation} \begin{tikzcd}[column sep=2mm,row sep=0.5mm]
    & \bullet \ar[ddddl,no head] \ar[ddddr,no head] & \\
    && \\
    && \\
    && \\
    0 \ar[dddd,no head] \ar[ddddrr,no head] && 1 \ar[ddddll,no head] \ar[dddd,no head] \\
    && \\
    && \\
    && \\
    0 && 1 \\
    & \vdots & \\
\end{tikzcd} \end{equation}
In this example we will write the vertex label where an edge lands (0 or 1) to refer to that edge; in this way, paths (both finite and infinite) identify with words in $\{0,1\}$.
Use $\bullet$ to refer to the path that consists only of the initial vertex (i.e., the word of length $0$).
In particular, the path space $X=X_{\mc B}$ identifies with $\{0,1\}^{\mathbb N}$.
    \begin{enumerate}
        \item (Odometer) \label{it:Odometer}
        Define the recipe $(\{\bullet\},\{\bullet\},\rho_{\mathrm{od}}=\rho_0)$ recursively by
\begin{equation} \Big(\{1^k\},\{0^k\},\rho_k\coloneqq \big((1^k0,0^k1),(\{1^{k+1}\},\{0^{k+1}\},\rho_{k+1})\big) \, \Big) \end{equation}
for $k =0,1,\dots$.
The unique function $\varphi_{\mathrm{od}}:X_{\mc B} \to X_{\mc B}$ that fits this recipe is the odometer on $\{0,1\}^\mathbb N$.
        \item \label{it:BitFlip}
Define $\hat 0\coloneqq 1$ and $\hat 1 \coloneqq 0$, and applying this character-by-character, we define $\hat w$ for any word $w$ in $\{0,1\}$.
Define a recipe $(\{\bullet\},\{\bullet\},\rho_{\mathrm{flip}}=\rho_\bullet)$ recursively by
\begin{equation} \Big(\{w\},\{\hat w\},\rho_w\coloneqq \big((\{w0\},\{\hat w1\},\rho_{w0}),(\{w1\},\{\hat w0\},\rho_{w1})\big) \, \Big) \end{equation}
for any finite word $w$ in $\{0,1\}$ (representing an element of $E_{0\dots\ell}$).
The unique function $\varphi_{\mathrm{flip}}:X_{\mc B} \to X_{\mc B}$ that fits this recipe is $w \mapsto \hat w$.

More generally, if $\mc B$ is a Bratteli diagram in which there is exactly one edge from each vertex at one level to each vertex at the next (so that words identify with sequences of vertices), and given a permutation $\sigma_\ell$ of the vertices $V_\ell$ at level $\ell$, one can produce a recipe similarly, so that the unique function that fits it sends a word $v_0 v_1 \cdots$ to the word $\sigma_0(v_0) \sigma_1(v_1) \cdots$.
    \end{enumerate}
\end{example}

In the above examples there is a unique function fitting the given recipe.
In general to arrange this, we ask that the constrained hypothesis (as in Definition~\ref{def:TEBRmore}\ref{it:TEBRconstrained} above) holds; this also ensures continuity of the fitting function.

\begin{lemma}\label{lem:TEBR}
    Let $\mc B$ be a Bratteli diagram and let $(A,B,\rho)$ be a recipe where $A,B\subseteq E_{0\dots\ell}$.
    Suppose that $\rho$ is $(N_\ell)_{\ell=1}^\infty$-constrained, for some sequence $N_\ell \to \infty$.
    Then there is a unique function $\varphi:AX_{\mc B} \to BX_{\mc B}$ that fits $(A,B,\rho)$.
    Moreover, $\varphi$ is a homeomorphism.
\end{lemma}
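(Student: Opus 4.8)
The plan is to build $\varphi$ by a direct recursive/inductive construction on the tree of descendant recipes, then separately establish uniqueness and the homeomorphism property. First I would unwind the definition of ``fits'': a function fitting $(A,B,\rho)$ is completely pinned down on each cylinder $w_iX_{\mc B}$ (by tail-equivalence $\varphi(w_ix)=z_ix$) and on each block $A_iX_{\mc B}$ it must itself fit $(A_i,B_i,\rho_i)$. Since $AE_{\ell,\ell+1}$ partitions into the $\{w_i\}$ and the $A_i$, iterating this one level at a time produces, for every point $a \in AX_{\mc B}$ and every $k$, either a ``tail-equivalence'' instruction that determines $\varphi(a)$ outright from some $(w_i,z_i)$ appearing at a finite stage, or an infinite descending chain of descendant recipes $(A,B,\rho) \supseteq (A^{(1)},B^{(1)},\rho^{(1)}) \supseteq \cdots$ with $a \in A^{(k)}X_{\mc B}$ for all $k$. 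In the first case $\varphi(a)$ is forced. In the second case, the $(N_\ell)$-constrained hypothesis says each $A^{(k)}$ lies in a cylinder $w^{(k)}E_{\dots \ell+k}$ with $|w^{(k)}| \geq N_{\ell+k} \to \infty$, and likewise $B^{(k)} \subseteq z^{(k)}E_{\dots\ell+k}$; so $\bigcap_k A^{(k)}X_{\mc B}$ and $\bigcap_k B^{(k)}X_{\mc B}$ are each singletons (nested nonempty cylinders with lengths going to infinity), and we are forced to send the one to the other. This simultaneously proves existence (define $\varphi$ by these forced values) and uniqueness (any fitting function must agree with these values).

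Next I would check $\varphi$ is well-defined and actually fits $(A,B,\rho)$: well-definedness is automatic because the two cases above are mutually exclusive and cover every point (at each level a point goes into exactly one block of the partition in condition (iii) of Definition~\ref{def:TEBR}), and the fitting conditions \ref{it:Fit.1}, \ref{it:Fit.2} hold essentially by construction since the recursive definition of $\varphi$ restricted to $A_iX_{\mc B}$ is exactly the recursive construction applied to $(A_i,B_i,\rho_i)$. One small point to record is that $\varphi(A_iX_{\mc B}) = B_iX_{\mc B}$ and not merely $\subseteq$; this follows by running the same argument with the roles of $A$ and $B$ swapped, or by noting surjectivity onto $BX_{\mc B}$ (below) descends to each block.

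For the homeomorphism claim, the cleanest route is symmetry: the data of a recipe is visibly symmetric under swapping $A \leftrightarrow B$, $w_i \leftrightarrow z_i$, $A_i \leftrightarrow B_i$, so $(B,A,\rho^{\mathrm{op}})$ is again a recipe, it is $(N_\ell)$-constrained, and its unique fitting function $\psi:BX_{\mc B}\to AX_{\mc B}$ is, by the forced-values description, exactly the inverse of $\varphi$ on points (tail-equivalence pairs $w_ix \leftrightarrow z_ix$ are symmetric, and the singleton-to-singleton assignments are symmetric). Hence $\varphi$ is a bijection. For continuity: given $a = wx \in w_iX_{\mc B}$ determined at a finite stage, $\varphi$ agrees with the tail-map $w_i x' \mapsto z_i x'$ on the whole clopen cylinder $w_iX_{\mc B}$, which is continuous; given $a$ in the infinite-chain case, for any $k$ the cylinder $A^{(k)}X_{\mc B}$ is a clopen neighbourhood of $a$ whose image lies in $B^{(k)}X_{\mc B} \subseteq z^{(k)}E_{\dots}X_{\mc B}$, a cylinder of arbitrarily large length, giving continuity at $a$; so $\varphi$ is continuous, and by the same argument applied to $\psi$, so is $\varphi^{-1}$. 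Since $X_{\mc B}$ is compact Hausdorff this already suffices, but the explicit inverse makes it unconditional.

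The main obstacle I anticipate is the bookkeeping in the infinite-chain case: making precise that ``follow the partition down level by level'' produces a genuine descending sequence of descendant recipes with the right cylinder containments, and that the $(N_\ell)$-constraint is being applied at the correct level index (the recipe $(A_i,B_i,\rho_i)$ lives in $E_{0\dots\ell+1}$, so its descendants at depth $d$ live in $E_{0\dots\ell+d}$ and are constrained to agree on the first $N_{\ell+d}$ edges). Once the indexing is set up carefully, the nested-cylinders argument and the symmetry argument are routine. A secondary point worth stating explicitly is that $AX_{\mc B}$ and $BX_{\mc B}$ are nonempty and that the various cylinders appearing are nonempty, so that intersections of nested cylinders are genuinely singletons rather than possibly empty — this is where one uses that the $w_i,z_i,A_i,B_i$ actually arise as (nonempty) sets of paths in $\mc B$.
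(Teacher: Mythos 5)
Your argument is essentially the same as the paper's: trace descendant recipes to produce nested cylinders whose intersection is a singleton (using the $(N_\ell)$-constrained hypothesis and compactness), and obtain the inverse by reversing the recipe as in Remark~\ref{rmk:ReversingTEBR}. The only difference is bookkeeping — the paper first normalizes to $m=0$ by unfolding each tail-equivalence pair $(w_i,z_i)$ into a descendant recipe, so that every point follows an infinite descending chain, whereas you keep the ``finite-stage tail-equivalence'' and ``infinite chain'' cases separate throughout; both routes are correct.
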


\begin{proof}
Assume the notation for $\rho$ as in Definition~\ref{def:TEBR}.
For simplicity, we can assume that $m=0$, as we now explain.
Say $m\geq 1$.
Let us enumerate the edges emanating from $w_1$ as $e_1,\dots,e_k$.
Then consider the recipe $(\{w_1\},\{z_1\},\rho')$ given by $\rho'\coloneqq ((w_1e_1,z_1e_1),\dots,(w_1e_k,z_1e_k))$.
Then $\varphi$ fits $(A,B,\rho)$ if and only if $\varphi$ fits the recipe 
\begin{equation}(A,B,((w_2,z_2),\dots,(w_m,z_m),(A_1,B_1,\rho_1),\dots,(A_n,B_n,\rho_n),(\{w_1\},\{z_1\},\rho')). \end{equation}
Iterating this, we can replace $(A,B,\rho)$ by a recipe in which $m=0$; moreover, by applying the same procedure iteratively, we can assume that the same holds for all descendants.

For simplicity, let us also assume that $\ell=0$ (so that $A,B \subseteq V_0$).
For consistency going forward, write $\rho_0=\rho$ and use the notation $\rho_0=((A_1^{(0)},B_1^{(0)},\rho_1^{(0)}),\dots,(A_{n_0}^{(0)},B_{n_0}^{(0)},\rho_{n_0}^{(0)}))$.

Consider a point $x=\halfedge{v_0}{e_1}\halfedge{v_1}{e_2}\cdots \in AX_{\mc B}$.
Since $v_0E_{0\dots1}=A_1^{(0)}\sqcup\cdots \sqcup A_{n_0}^{(0)}$, there is a unique $i_0\in\{1,\dots,n_0\}$ such that $\edge{v_0}{e_1}{v_1} \in A_{i_0}^{(0)}$.
Note then that for $\varphi$ to fit $(A,B,\rho)$, it must be the case that $\varphi(A_{i_0}^{(0)})\subseteq B_{i_0}^{(0)}$, and in particular,
\begin{equation} \varphi(x) \in B_{i_0}^{(0)}X_{\mc B}. \end{equation}

Set $\rho_1\coloneqq \rho_{i_0}^{(0)}$, and expand it as 
$\rho_1=((A_1^{(1)},B_1^{(1)},\rho_1^{(1)}),\dots,(A_{n_1}^{(1)},B_{n_1}^{(1)},\rho_{n_1}^{(1)})$.
Then, similarly, there is a unique $i_1$ such that $\halfedge{v_0}{e_1}\edge{v_1}{e_2}{v_2} \in A_{i_1}^{(1)}$, and then $\varphi$ will need to satisfy
\begin{equation} \varphi(x) \in B_{i_1}^{(0)}X_{\mc B}. \end{equation}
Set $\rho_2\coloneqq \rho_{i_1}^{(1)}$ and continue.

This yields sequences $(A_{i_\ell}^{(\ell)})_{\ell=1}^\infty$ and $(B_{i_\ell}^{(\ell)})_{\ell=1}^\infty$, which in turn yield decreasing sequences of clopen sets
\begin{equation} \begin{split}
A_{i_0}^{(0)}X_{\mc B} \supseteq A_{i_1}^{(1)}X_{\mc B} \supseteq \cdots, \\
B_{i_0}^{(0)}X_{\mc B} \supseteq B_{i_1}^{(1)}X_{\mc B} \supseteq \cdots, \\
\end{split} \end{equation}
such that $x \in A_{i_\ell}^{(\ell)}X_{\mc B}$ for all $\ell$, and therefore $\varphi(x)$ must be in $B_{i_\ell}^{(\ell)}X_{\mc B}$ for all $\ell$.
The condition that $\rho$ is $(N_\ell)_{\ell=1}^\infty$-constrained implies that each $B_{i_\ell}^{(\ell)}$ is contained in a cylinder defined by a path of length $N_\ell$; since $N_\ell \to \infty$, it follows (using compactness) that $\bigcap_\ell B_{i_\ell}^{(\ell)}$ consists of a single point $y$, and we therefore (must) define $\varphi(x)\coloneqq y$.

This shows that $\varphi$ exists and is unique.
We see that $\varphi$ maps each cylinder given by a path of length $\ell$ into a cylinder of path of length $N_\ell$, so again since $N_\ell \to \infty$, this implies that $\varphi$ is continuous.
As we may apply the same with the roles of $A$ and $B$ reversed, we obtain a continuous inverse to $\varphi$, showing that $\varphi$ is a homeomorphism.
\end{proof}

\begin{lemma}\label{lem:TEBRshadow}
    Let $\mc B$ be a Bratteli diagram and let $(A,B,\rho)$ be a recipe which is $(N_\ell)_{\ell=1}^\infty$-constrained for some sequence $N_\ell \to \infty$, and let $\varphi:AX_{\mc B} \to BX_{\mc B}$ be the homeomorphism that fits $\rho$.
    Suppose that the equivalence relation $\mc R$ generated by $\mc R_{\mc B,\mathrm{tail}}$ and $\varphi$ is \'etale.
    
    Let $\mc B'$ be another Bratteli diagram and suppose that there is a shadow Bratteli diagram  homomorphism $T:\mc B \to \mc B'$, in the sense of Lemma~\ref{lem:Shadow}.
    Suppose that, for every descendant $(A',B',\rho')=((w_1,z_1),\dots,(w_m,z_m),(A_1,B_1,\rho_1),\dots,(A_n,B_n,\rho_n))$ of $\rho$, and every $i=1,\dots,n$, the vertices that paths in $T(A_i)$ end at are the same as the vertices that paths in $T(B_i)$ end at, counted with multiplicity.
    Then the shadow homeomorphism $s_X:X_{\mc B} \to X_{\mc B'}$ induces a surjective homomorphism $S(\mc R) \to S(\mc R_{\mc B',\mathrm{tail}})$.
\end{lemma}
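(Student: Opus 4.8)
The strategy is to invoke Proposition~\ref{prop:AbstractHomomorphism} with $\mc G = \mc R$ and $\mc H = \mc R_{\mc B',\mathrm{tail}}$ and $\theta = s_X$. Since $s_X$ is a homeomorphism of unit spaces (Lemma~\ref{lem:Shadow}), it suffices to show that for every compact open bisection $U \subseteq \mc R$ there is a compact open bisection $V \subseteq \mc R_{\mc B',\mathrm{tail}}$ with $s_X(s_{\mc R}(U)) = s_{\mathrm{tail}}(V)$ restricted to source and $s_X(r_{\mc R}(U)) = r_{\mathrm{tail}}(V)$; indeed it is enough to verify this for $U$ ranging over a basis of compact open bisections of $\mc R$, since a general compact open bisection is a finite disjoint union of basic ones (and one can take the corresponding disjoint union of the $V$'s, after shrinking to make them disjoint). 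By the discussion of combined equivalence relations (the basis $\Gamma$ of Proposition~\ref{prop:EtaleCombinedRelation}), such a basic $U$ has the form $\gamma_1\cdots\gamma_n$ where each $\gamma_i$ is a compact open bisection in $\mc R_{\mc B,\mathrm{tail}}$ or in $\mc R_\varphi$.

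First I would handle the two generating cases. If $\gamma = U_{w,z}$ is a basic tail-equivalence bisection as in \eqref{eq:TailEqBasis}, then $s_X \times s_X$ carries it into $\mc R_{\mc B',\mathrm{tail}}$ (this is part of Lemma~\ref{lem:Shadow}: $T$ induces $s_{\mathrm{tail}}$), and one checks directly that $s_X(s(U_{w,z}))$ and $s_X(r(U_{w,z}))$ are the source and range of the tail-equivalence bisection $U_{T(w),T(z)}$ (possibly after refining $T(w),T(z)$ to a common length, which only partitions $U_{T(w),T(z)}$ into finitely many sub-bisections of the same kind). The crux is the second case: a compact open bisection inside $\mc R_\varphi$, which by \eqref{eq:Rphidesc} is (up to the trivial $\id_U$ pieces and inverses) of the form $\{(x,\varphi(x)) : x \in U\}$ for $U$ a clopen subset of $A$. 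Here I would argue that $s_X(U)$ and $s_X(\varphi(U))$, although not matched pointwise by tail-equivalence on $\mc B'$, \emph{are} matched as sets: by the recursion defining $\varphi$, the clopen set $U$ can be partitioned into finitely many cylinders $w_i X_{\mc B}$ on which $\varphi$ acts by the tail-equivalence rule $\varphi(w_i x) = z_i x$ (coming from the $(w_i,z_i)$ data at some descendant recipe) together with one small cylinder contained in any prescribed level-$N$ cylinder — use that the recipe is $(N_\ell)$-constrained with $N_\ell \to \infty$ to make this residual piece negligible in the sense that it and its $\varphi$-image are single cylinders of arbitrarily small diameter. Passing through $s_X$, on each $w_iX_{\mc B}$ we get $s_X(w_i x) = T(w_i)\,T(x)$ and $s_X(z_i x) = T(z_i)\,T(x)$; since the hypothesis guarantees that $T(A_j)$ and $T(B_j)$ end at the same multiset of vertices for the relevant descendant recipe (and hence so do the singleton data $T(w_i)$, $T(z_i)$, which end at the same vertex), these are matched by the tail-equivalence bisection $U_{T(w_i),T(z_i)}$ on $\mc B'$. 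The residual small cylinder $C$ and its image $\varphi(C)$ also match up after $T$: by the constrained condition applied recursively, $\bigcap$ of the nested cylinders is a single point, and $s_X(C)$, $s_X(\varphi(C))$ are cylinders ending at vertices which, by the multiset hypothesis applied down the whole descendant chain, must coincide — so $s_X(C)$ and $s_X(\varphi(C))$ (refined to a common length) are the source and range of a tail-equivalence bisection as well. Assembling these finitely many pieces and taking $V$ to be their (disjointified) union gives the required compact open bisection in $\mc R_{\mc B',\mathrm{tail}}$.

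For a general basic $U = \gamma_1 \cdots \gamma_n$ I would proceed by induction on $n$: having produced compact open bisections $V_1,\dots,V_n$ in $\mc R_{\mc B',\mathrm{tail}}$ with $s_X(s(\gamma_i)) = s(V_i)$ and $s_X(r(\gamma_i)) = r(V_i)$ for each $i$, the concatenation $V_1\cdots V_n$ is again a compact open bisection of the tail-equivalence relation (it is a product of elements of its basis, hence lies in the corresponding $\Gamma'$), and one checks $s_X(s(\gamma_1\cdots\gamma_n)) = s(V_1\cdots V_n)$ and likewise for ranges by composing the identifications; the key point is that $s_X$ intertwines the partial bijections, which it does on each generating piece by the two cases above. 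This verifies the hypothesis of Proposition~\ref{prop:AbstractHomomorphism}, which then yields the surjective homomorphism $S(\mc R) \to S(\mc R_{\mc B',\mathrm{tail}})$, $[\mc A] \mapsto [s_X(\mc A)]$.

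\textbf{Main obstacle.} The delicate point is the residual single point in the $\varphi$ recursion: pointwise, $s_X$ does \emph{not} send the $\varphi$-orbit to a tail-equivalence orbit (as flagged in the illustrative examples). One must show that at the level of \emph{clopen sets} (equivalently, at the level of the type semigroup, where only source-and-range data of bisections matter) this failure disappears, by absorbing the bad point into cylinders that can be made arbitrarily small using $N_\ell \to \infty$ and by using the multiset-of-endpoints hypothesis all the way down the descendant tree to match those small cylinders after applying $T$. Making the bookkeeping of "finitely many cylinders plus one shrinking residual piece, all matched after $T$" precise — and checking it is compatible with composition of bisections — is where the real work lies.
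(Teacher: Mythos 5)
Your proof follows the same high-level route as the paper's: invoke Proposition~\ref{prop:AbstractHomomorphism}, reduce to the generators $\mc R_{\mc B,\mathrm{tail}}$ (handled via $s_{\mathrm{tail}}$) and $\mc R_\varphi$, and decompose clopen subsets of $A$ into pieces on which the action of $\varphi$ is either tail-equivalence (the $(w_i,z_i)$ data) or is an $A_iX_{\mc B} \to B_iX_{\mc B}$ bijection for some descendant recipe, which is matched after $T$ by the multiset-of-endpoints hypothesis. So the key lemma you cite and the key hypothesis you invoke are exactly the ones the paper uses.

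Where you diverge is in the bookkeeping for the $\mc R_\varphi$ case, and your version as written has a flaw. You claim a clopen $U \subseteq A$ can be partitioned into ``finitely many cylinders $w_iX_{\mc B}$ on which $\varphi$ acts by tail-equivalence, together with one small cylinder'' that shrinks as $N_\ell \to \infty$. This is false in general: a recipe $(A',B',\rho')$ may have $n > 1$ children $(A_i,B_i,\rho_i)$, so peeling off tail-equivalence pieces level by level produces a growing \emph{tree} of $A_iX_{\mc B}$-type pieces, not a single shrinking residual. (It works only in the $r_\ell = 2$ case from the illustrative examples, where each level has one descendant.) Moreover each residual $A_iX_{\mc B}$ is a union of cylinders, not a single cylinder, so your sentence about the residual ``$s_X(C)$, $s_X(\varphi(C))$ are cylinders ending at vertices which must coincide'' does not quite say what the hypothesis provides, which is a bijection between multisets of ending vertices. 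The paper sidesteps all of this by \emph{not} trying to shrink anything: given a cylinder $wX_{\mc B}$ with $w$ of length $k$, pick $\ell$ with $N_\ell \geq k$; by the constrained condition the cylinder decomposes completely and finitely into sets $w_iX_{\mc B}$ and $A_iX_{\mc B}$ coming from descendants at level at most $\ell$, and each type is handled outright (tail-equivalence for the former, the multiset hypothesis for the latter). The ``shrinking to a point'' idea is not needed, and trying to run it introduces the error above. Once you replace ``one small residual cylinder'' with ``finitely many pieces $A_iX_{\mc B}$, each handled by the multiset hypothesis,'' your argument matches the paper's. Your additional explicit induction for composite bisections $\gamma_1\cdots\gamma_n$ is fine and fills a small step that the paper treats implicitly.
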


\begin{proof}
    We want to check the hypothesis of Proposition~\ref{prop:AbstractHomomorphism}, where $\mc G\coloneqq \mc R$, $\mc H \coloneqq \mc R_{\mc B',\mathrm{tail}}$, and $\theta\coloneqq s_X$.
    Since $s_X$ induces a continuous injective groupoid homomorphism $s_{\mathrm{tail}}:\mc R_{\mc B,\mathrm{tail}} \to \mc R_{\mc B',\mathrm{tail}}$, the hypothesis holds for any bisection coming from $\mc R_{\mc B,\mathrm{tail}}$.
    Since $\mc R$ is generated by $\mc R_{\mc B,\mathrm{tail}}$ and $\varphi$, it suffices to check the hypothesis for bisections coming from $\varphi$; that is, bisections of the form $U=\{(x,\varphi(x)): x \in Z\}$ for some clopen set $Z\subseteq A$.
    
    Consider a descendant $(A',B',\rho')$ where $\rho'=((w_1,z_1),\dots,(w_m,z_m),(A_1,B_1,\rho_1),\dots,(A_n,B_n,\rho_n))$.
    If $Z=w_iX$ for some $i=1,\dots,m$, then $U$ comes from tail-equivalence, so the hypothesis holds.
    On the other hand, if $Z=A_iX$ for some $i=1,\dots,n$, then by the hypothesis, $s_X(A_iX)$ and $s_X(B_iX)$ are equivalent via tail equivalence.

    Any clopen set can be written as a finite disjoint union of cylinders.
    Moreover, for $w \in E_{0\dots k}$, if we take $\ell$ such that $N_\ell \geq k$, it follows that the cylinder $wX_{\mc B}$ can be decomposed as a union of clopen sets coming from descendants at level at most $\ell$ (that is, sets $w_iX$ or $A_iX$ where the descendant is $\rho'$ as in the previous paragraph).
    Consequently, we see that the hypothesis of Proposition~\ref{prop:AbstractHomomorphism} holds for any clopen set, and therefore $s_X$ does induce a surjective homomorphism $S(\mc R) \to S(\mc R_{\mc B',\mathrm{tail}})$.
\end{proof}

\begin{remark}\label{rmk:ReversingTEBR}
Given a recipe $(A,B,\rho)$ with $\rho=((w_1,z_1),\dots,(w_m,z_m),(A_1,B_1,\rho_1),\dots,(A_n,B_n,\rho_n))$ (for some $m,n$), we can reverse it to produce a recipe $(B,A,\rho_{\mathrm r})$, with 
\begin{equation} \rho_{\mathrm r}=((z_1,w_1),\dots,(z_m,w_m),(B_1,A_1,\rho_{1,\mathrm r}),\dots,(B_n,A_n,\rho_{n,\mathrm r})), \end{equation}
where we recursively reverse each $(A_i,B_i,\rho_i)$ to produce $(B_i,A_i,\rho_{i,\mathrm r})$.
If $\varphi:AX_{\mc B} \to BX_{\mc B}$ is a homeomorphism fitting $(A,B,\rho)$, then $\varphi^{-1}$ fits $(B,A,\rho_{\mathrm r})$.
\end{remark}

\section{Obtaining an \'etale equivalence relation from a homeomorphism and tail-equivalence}
\label{sec:TEBRetale}

We now consider the matter of when a homeomorphism fitting a recipe is compatible with tail-equivalence, in the sense that the combined equivalence relation is \'etale.
For this we will make use of Proposition~\ref{prop:EtaleCombinedRelation}.
We start by providing some examples to show that this does not happen automatically; this will motivate the introduction of conditions that turn out to be sufficient.

\begin{example}\label{eg:NonEtaleFlip1}
Let $\mc B$ be the Bratteli diagram
\begin{equation} \begin{tikzcd}[column sep=2mm,row sep=0.5mm]
    && \bullet \ar[ddddll,no head] \ar[dddd,no head] \ar[ddddrr,no head] && \\
    &&&& \\
    &&&& \\
    &&&& \\
    a \ar[dddd,no head] \ar[ddddrr,no head]\ar[ddddrrrr,no head] && b \ar[ddddll,no head] \ar[dddd,no head] \ar[ddddrr,no head] && c \ar[ddddllll,no head] \ar[ddddll, no head] \ar[dddd,no head] \\
    &&&& \\
    &&&& \\
    &&&& \\
    a&&b&&c \\
    && \vdots && \\
\end{tikzcd} \end{equation}
and as in Example~\ref{eg:SomeTEBRs}, identify paths in $\mc B$ with words in $\{a,b,c\}$.
Let $(\{\bullet\},\{\bullet\},\rho)$ be the recipe as described at the end of Example~\ref{eg:SomeTEBRs}\ref{it:BitFlip}, such that the fitting homeomorphism $\varphi:X_{\mc B} \to X_{\mc B}$ is the one taking a word $x_1x_2\cdots$ in $\{a,b,c\}^{\mathbb N}$ to $\sigma(x_1)\sigma(x_2)\cdots$ where 
\begin{equation}\label{eq:EgSigma} \sigma(a)=a,\quad \sigma(b)=c,\quad \text{and}\quad \sigma(c)=b. \end{equation}

Then $\varphi(\overline{a})=\overline{a}$, but there is no other point (let alone a neighbourhood of points) fixed by $\varphi$.
If there were an \'etale topology for $\mc R_\varphi$, then both $\{(x,\varphi(x)): x \in X_{\mc B}\}$ and $\{(x,x): x\in X_{\mc B}\}$ would be open subsets, but their intersection is $\{(\overline{a},\overline{a})\}$, which cannot be open.
\end{example}

\begin{example}\label{eg:NonEtaleFlip2}
The next example is a variant on the previous, in which $\mc R_\varphi$ is an \'etale equivalence relation, but its combination with tail-equivalence no longer is.
Let $\mc B$ be the Bratteli diagram
\begin{equation} \begin{tikzcd}[column sep=2mm,row sep=0.5mm]
    && \bullet \ar[dddd,no head] \ar[ddddrr,no head] && \\
    &&&& \\
    &&&& \\
    &&&& \\
    && b \ar[ddddll,no head] \ar[dddd,no head]\ar[ddddrr,no head] && c \ar[ddddllll,no head] \ar[ddddll,no head] \ar[dddd,no head] \\
    &&&& \\
    &&&& \\
    &&&& \\
    a \ar[dddd,no head] \ar[ddddrr,no head]\ar[ddddrrrr,no head] && b \ar[ddddll,no head] \ar[dddd,no head] \ar[ddddrr,no head] && c \ar[ddddllll,no head] \ar[ddddll, no head] \ar[dddd,no head] \\
    &&&& \\
    &&&& \\
    &&&& \\
    a&&b&&c \\
    && \vdots && \\
\end{tikzcd} \end{equation}
that is, we simply removed one vertex labelled $a$ from the previous example.
We may, as in the previous example, create a recipe whose fitting homeomorphism $\varphi$ is the restriction of the previous one to this tail space: that is, for a word $x_1x_2\cdots$ with $x_1 \in \{b,c\}$ and $x_i\in\{a,b,c\}$ for $i > 1$, $\varphi(x_1x_2\cdots) = \sigma(x_1)\sigma(x_2)\cdots$ where $\sigma$ is given by \eqref{eq:EgSigma}.
In this case, $\varphi$ has no fixed points and (as before) $\varphi$ has order two -- in other words, $\varphi$ comes from a free action of $\mathbb Z_2$.
Using this, one can show that $\mc R_\varphi$ has an \'etale topology.

However, $V\coloneqq \{(x,\varphi(x)):x\in X\}$ is an open set in $\mc R_\varphi$, whereas $U_{bc,ca}$ (see \eqref{eq:TailEqBasis}) is an open set in $\mc R_{\mc B,\mathrm{tail}}$.
The intersection of these two sets contains only the point $(b\overline{a},c\overline{a})$, so that $U\cap V$ could not be an open set in an \'etale topology.
This shows that there is no \'etale topology for the equivalence relation generated by $\varphi$ and tail-equivalence.
\end{example}

\begin{example}\label{eg:NonEtaleFlipPlusOdometer}
Let $\mc B$ be the Bratteli diagram from Example \ref{eg:SomeTEBRs}, and again as in that example, identify paths with words in $\{0,1\}$ and $X=\{0,1\}^{\mathbb N}$.
Obtain a recipe $(\{0\},\{0\},\rho_{0,\mathrm{od}})$ exactly as in Example \ref{eg:SomeTEBRs}\ref{it:Odometer} but prefixing all paths with $0$, and a recipe $(\{1\},\{1\},\rho_{1,\mathrm{flip}})$ exactly as in Example \ref{eg:SomeTEBRs}\ref{it:BitFlip} rooted at the vertex $1$ on the first level.
Define a recipe $\left(\{\bullet\},\{\bullet\},\rho=\big((\{0\},\{0\},\rho_{0,\mathrm{od}}),(\{1\},\{1\},\rho_{1,\mathrm{flip}})\big)\,\right)$.
Let $\varphi_{\mathrm{od}},\varphi_{\mathrm{flip}}: X\to X$ be from Example~\ref{eg:SomeTEBRs}.
Then the homeomorphism $\varphi:X \to X$ that fits this recipe is given by
\begin{equation} \varphi(0x)= 0\varphi_{\mathrm{od}}(x),\quad \varphi(1x)=1\varphi_{\mathrm{flip}}(x), \end{equation}
for all $x \in X$.
Since $\varphi_{\mathrm{od}}$ and $\varphi_{\mathrm{flip}}$ each generate an \'etale equivalence relation, we have that $\mc R_\varphi$ is also \'etale.
(One can even check that each of the recipes in Example~\ref{eg:SomeTEBRs}, when combined with tail-equivalence, produce an \'etale equivalence relation.)

However, let $\mc R$ be the equivalence relation generated by $\varphi$ and tail-equivalence, and consider the sets
\begin{equation}\begin{split}
U&\coloneqq \{(0x,0\varphi_{\mathrm{od}}(x)): x \in X\}, \\
V&\coloneqq \{(0x,0\varphi_{\mathrm{flip}}(x)): x \in X\}.
\end{split} \end{equation}
We have that $U$ is just $\{(x,\varphi(x)):x \in 0X\}$, a clopen bisection in $\mc R_\varphi$, whereas
\begin{equation} V = U_{0,1} \{(x,\varphi(x)):x \in 1X\} U_{1,0}, \end{equation}
where the first and last are clopen bisections in $\mc R_{\mc B,\mathrm{tail}}$ (from \eqref{eq:TailEqBasis}) and the middle is a clopen bisection in $\mc R_\varphi$.
Hence $V$ would be open in $\mc R$.
The intersection of $U$ and $V$ consists of the single point $(0\overline{1},0\overline{0})$.
As before, we conclude that there is no \'etale topology of $\mc R$ which contains the open sets from both $\mc R_\varphi$ and $\mc R_{\mc B,\mathrm{tail}}$.
\end{example}

A homeomorphism fitting a recipe will map many points to tail-equivalent points, but may have some ``critical points'' which are mapped to points that are not tail-equivalent.
The following gives a combinatorial condition for the set of ``critical points'' (in this sense) to be disjoint from its image, even up to tail-equivalence.
This will rule out the sort of behaviour in Examples \ref{eg:NonEtaleFlip1} and \ref{eg:NonEtaleFlip2}.

\begin{lemma}
\label{lem:DisjointHypothesis}
    Let $\mc B$ be a Bratteli diagram and let $(A,B,\rho)$ be a recipe which is $(N_\ell)_{\ell=1}^\infty$-constrained for some sequence $N_\ell \to \infty$, and let $\varphi:AX_{\mc B} \to BX_{\mc B}$ be the homeomorphism that fits $\rho$.

    Suppose that for any descendant recipes $(A',B',\rho')$ and $(A'',B'',\rho'')$ at the same level, the set of vertices where paths in $A'$ end is disjoint from the set of vertices where paths in $B''$ end.

    Then for $x \in AX_{\mc B},y \in BX_{\mc B}$, if $\varphi(x)$ and $y$ are tail-equivalent, then either:
\begin{enumerate}
\item \label{it:DisjointHypothesis.1}
$x$ is tail-equivalent to $\varphi(x)$, or
\item \label{it:DisjointHypothesis.2}
$y$ is tail-equivalent to $\varphi(y)$.
\end{enumerate}
Moreover, if $x$ is tail-equivalent to $\varphi(x)$ then there is a clopen set $U$ containing $x$ such that $\{(x,\varphi(x)):x \in U\}$ is a clopen bisection in $\mc R_{\mc B,\mathrm{tail}}$.
\end{lemma}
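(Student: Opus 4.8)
The plan is to sort each point $x\in AX_{\mc B}$ according to where the recursion defining $\varphi$ (as in the proof of Lemma~\ref{lem:TEBR}) eventually sends it. Every $x$ determines a unique, possibly finite, chain of descendant recipes: starting from $(A,B,\rho)$, at each stage the prefix of $x$ that is one level longer than the current recipe either equals one of that recipe's paths $w_i$ (and we stop) or lies in exactly one of its sets $A_i$ (and we continue with $(A_i,B_i,\rho_i)$). Call $x$ \emph{non-critical} if this chain terminates and \emph{critical} otherwise. If $x$ is non-critical, with its prefix equal to the path $w_i$ of a descendant recipe $(A',B',\rho')$, then since $\varphi|_{A'X_{\mc B}}^{B'X_{\mc B}}$ fits $\rho'$, condition \ref{it:Fit.1} forces $\varphi(w_it)=z_it$ for \emph{every} extension $t$, so $\varphi$ carries $w_iX_{\mc B}$ onto $z_iX_{\mc B}$ via the basic bisection $U_{w_i,z_i}$ of \eqref{eq:TailEqBasis} (legitimate since $w_i$ and $z_i$ end at a common vertex). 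Hence the final sentence of the lemma will follow as soon as we show that a critical $x$ is never tail-equivalent to $\varphi(x)$ --- equivalently, that $x$ being tail-equivalent to $\varphi(x)$ forces $x$ to be non-critical.

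The point is that the disjointness hypothesis pins down the vertices visited by $x$ and by $\varphi(x)$. Fix a level and let $P$ (resp.\ $Q$) be the union, over all descendant recipes $(A',B',\rho')$ at that level, of the set of vertices where the paths in $A'$ (resp.\ $B'$) end; applying the hypothesis to every pair of such descendants --- including a recipe paired with itself --- gives $P\cap Q=\emptyset$ at every level. If $x$ is critical, it produces an infinite chain $(A^{(j)},B^{(j)},\rho^{(j)})$ with $x\in A^{(j)}X_{\mc B}$ and, by iterating condition \ref{it:Fit.2} down the chain, $\varphi(x)\in B^{(j)}X_{\mc B}$. Since each recursion step lengthens paths by exactly one, at level $\ell+j$ (with $\ell$ the level of $(A,B,\rho)$) the path $x$ passes through a vertex in $P$ while $\varphi(x)$ passes through a vertex in $Q$; as $P\cap Q=\emptyset$, $x$ and $\varphi(x)$ differ at every such level and so are not tail-equivalent. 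This proves the implication above, and hence the ``moreover''.

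For the dichotomy, assume $\varphi(x)$ and $y$ are tail-equivalent. If $x$ is non-critical we are in case \ref{it:DisjointHypothesis.1}. Otherwise $x$ is critical, so by the previous paragraph $\varphi(x)$ passes through a vertex of $Q$ at every sufficiently deep level; being tail-equivalent to $\varphi(x)$, so does $y$. Were $y$ also critical, the same reasoning applied to $y$ would make $y$ pass through a vertex of $P$ at every level, contradicting $P\cap Q=\emptyset$ at the deep levels just found. Hence $y$ is non-critical, and we are in case \ref{it:DisjointHypothesis.2} (here ``non-critical'' is understood via the recursion for $\rho$ applied to $y$, which is the content of conclusion \ref{it:DisjointHypothesis.2}).

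The main thing to get right is the link between the recipe combinatorics and the actual homeomorphism: namely that, for critical $x$, the containment $\varphi(x)\in B^{(j)}X_{\mc B}$ holds for \emph{all} $j$, which is an iteration of the ``fit'' relation \ref{it:Fit.2} down the chain --- together with the attendant bookkeeping of levels (a descendant reached after $j$ steps lives at level $\ell+j$). The hypothesis $N_\ell\to\infty$ itself plays no part in this combinatorial core; it is used only to invoke Lemma~\ref{lem:TEBR}, ensuring that $\varphi$ exists and is a homeomorphism in the first place.
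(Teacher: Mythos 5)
Your proposal is correct and takes essentially the same approach as the paper's proof. The critical/non-critical terminology, and the observation that the sets $P$ and $Q$ of $A$-side and $B$-side end-vertices are disjoint at every level, are precisely the organizing devices behind the paper's more compressed argument: in both cases the point is that if the recursion never terminates at $x$, then $x$ stays on the $A$-side of a descendant while $\varphi(x)$ stays on the $B$-side (iterating condition \ref{it:Fit.2}), so the disjointness hypothesis prevents $x$ and $\varphi(x)$ from ever ending at the same vertex, and the dichotomy then follows by playing this off against tail-agreement of $\varphi(x)$ and $y$ at a deep level. Your closing remark that $N_\ell\to\infty$ is used only to furnish the homeomorphism $\varphi$ and not in the combinatorial core is also accurate.
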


\begin{proof}
    We prove the last statement first, as it helps explain what is going on in the first statement.
    If $x$ and $\varphi(x)$ are tail-equivalent, then at some level the prefix of $x$ and of $\varphi(x)$ must end at the same vertex.
    By the disjointedness condition, it is impossible at this level for there to be a descendant recipe $(A',B',\rho')$ such that $x \in A'X_{\mc B}$.
    Hence instead we must have that at this (or an earlier) level, some $(w_i,z_i)$ in the recipe are prefixes of $x,\varphi(x)$ respectively.
    Then $U\coloneqq w_iX_{\mc B}$ satisfies
\begin{equation} \{(x,\varphi(x)): x\in U\} = U_{w_i,z_i}, \end{equation}
where $U_{w_i,z_i}$ is the clopen bisection in $\mc R_{\mc B,\mathrm{tail}}$ defined in \eqref{eq:TailEqBasis}.

    For the first statement, suppose, for a contradiction, that neither \ref{it:DisjointHypothesis.1} nor \ref{it:DisjointHypothesis.2} hold.
    Let $\ell$ be such that $\varphi(x)$ and $y$ agree after level $\ell$.
    In particular, they go to the same vertex at level $\ell$.

    Since \ref{it:DisjointHypothesis.2} does not hold, we see that there must be a descendant recipe $(A',B',\rho')$ at level $\ell$, such that $y \in A'X_{\mc B}$.
    Likewise, since \ref{it:DisjointHypothesis.1} does not hold, there is a descendant recipe $(A'',B'',\rho'')$ at level $\ell$ such that $x \in A''X_{\mc B}$ and $\varphi(x) \in B''X_{\mc B}$.
    But since $\varphi(x)$ and $y$ go to the same vertex at level $\ell$, then there must be a vertex that a path in both $A'$ and $B''$ end at.
    This contradicts the hypothesis.
\end{proof}

We now introduce a symmetry-type condition that will rule out the sort of behaviour in Example~\ref{eg:NonEtaleFlipPlusOdometer}.

\begin{definition}
\label{def:Model}
Let $\mc B$ be a Bratteli diagram, let 
\begin{equation} \Big(A,B,\rho=\big((w_1,z_1),\dots,(w_m,z_m),(A_1,B_1,\rho_1),\dots,(A_n,B_n,\rho_n)\big)\,\Big) \end{equation}
be a recipe with $A,B \subseteq E_{0\dots \ell}$, let $0\leq k_0\leq \ell$, and let
\begin{equation}
 \begin{split}
&\Big(A_{\mathrm{mod}},B_{\mathrm{mod}},
\\ &\qquad
\rho_{\mathrm{mod}}=\big((w^{\mathrm{mod}}_{1},z^{\mathrm{mod}}_{1}),\dots,(w^{\mathrm{mod}}_{m'},z^{\mathrm{mod}}_{m'}),
(A^{\mathrm{mod}}_{1},B^{\mathrm{mod}}_{1},\rho^{\mathrm{mod}}_{1}),\dots,(A^{\mathrm{mod}}_{n'},B^{\mathrm{mod}}_{n'},\rho^{\mathrm{mod}}_{n'})\big)\,\Big) 
\end{split} \end{equation}
be a recipe for the Bratteli diagram $\mathrm{Trunc}_{k_0}(\mc B)$, with $A,B\subseteq E_{k_0\dots \ell}$.
Recursively we define what it means for $(A_{\mathrm{mod}},B_{\mathrm{mod}},\rho_{\mathrm{mod}})$ to \emph{model} $(A,B,\rho)$ as follows
\begin{enumerate}
\item $\mathrm{Trunc}_{k_0}|_A$ is a bijection $A \to A_{\mathrm{mod}}$;
\item $\mathrm{Trunc}_{k_0}|_B$ is a bijection $B \to B_{\mathrm{mod}}$;
\item $m=m'$, $\mathrm{Trunc}_{k_0}(w_i)=w^{\mathrm{mod}}_i$, and $\mathrm{Trunc}_{k_0}(z_i)=z^{\mathrm{mod}}_i$, for all $i=1,\dots,m$; and
\item $n=n'$ and $(A^{\mathrm{mod}}_{i},B^{\mathrm{mod}}_{i},\rho^{\mathrm{mod}}_{i})$ models $(A_i,B_i,\rho_i)$ for $i=1,\dots,n$.
\end{enumerate}
A recipe $(A,B,\rho=((w_1,z_1),\dots,(w_m,z_m)),(A_1,B_1,\rho_1),\dots,(A_n,B_n,\rho_n))$ is defined recursively to be \emph{highly symmetric} if there is some recipe $(A_{\mathrm{mod}},B_{\mathrm{mod}},\rho_{\mathrm{mod}})$ for some truncated Bratteli diagram $\mathrm{Trunc}_{k_0}(\mc B)$ which simultaneously models all the $(A_i,B_i,\rho_i)$, and $(A_{\mathrm{mod}},B_{\mathrm{mod}},\rho_{\mathrm{mod}})$ is highly symmetric.
Equivalently, for every $\ell$, there is a recipe $(A_{\mathrm{mod},\ell},B_{\mathrm{mod},\ell},\rho_{\mathrm{mod},\ell})$ for some truncated Bratteli diagram $\mathrm{Trunc}_{k_\ell}(\mc B)$, which is a model for every descendant at level $\ell$.
\end{definition}

\begin{lemma}
\label{lem:ModelHomeo}
Let $\mc B$ be a Bratteli diagram, let $(A,B,\rho)$ be a recipe, let $k_0 \geq 1$ and let $(A_{\mathrm{mod}},B_{\mathrm{mod}},\rho_{\mathrm{mod}})$ be a recipe for $\mathrm{Trunc}_{k_0}(\mc B)$ which models $(A,B,\rho)$.
If $\varphi:AX_{\mc B} \to BX_{\mc B}$ is a homeomorphism that fits $(A,B,\rho)$, then there is a homeomorphism 
\begin{equation} \varphi_{\mathrm{mod}}:A_{\mathrm{mod}}X_{\mathrm{Trunc}_{k_0}(\mc B)} \to B_{\mathrm{mod}}X_{\mathrm{Trunc}_{k_0}(\mc B)} \end{equation}
such that $\varphi_{\mathrm{mod}} \circ \mathrm{Trunc}_{k_0} = \mathrm{Trunc}_{k_0} \circ \varphi$ on $AX_{\mc B}$.
\end{lemma}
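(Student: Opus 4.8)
The plan is to obtain $\varphi_{\mathrm{mod}}$ as a conjugate of $\varphi$ by the truncation map. The key point will be that, under the hypotheses of Definition~\ref{def:Model}, the map $\mathrm{Trunc}_{k_0}$ restricts to a \emph{homeomorphism} $t_A\colon AX_{\mc B} \to A_{\mathrm{mod}}X_{\mathrm{Trunc}_{k_0}(\mc B)}$ (and likewise to a homeomorphism $t_B\colon BX_{\mc B} \to B_{\mathrm{mod}}X_{\mathrm{Trunc}_{k_0}(\mc B)}$); granting this, one simply sets $\varphi_{\mathrm{mod}} \coloneqq t_B \circ \varphi \circ t_A^{-1}$, which is a homeomorphism onto the required space, and for which the intertwining identity $\varphi_{\mathrm{mod}} \circ \mathrm{Trunc}_{k_0} = \mathrm{Trunc}_{k_0} \circ \varphi$ on $AX_{\mc B}$ holds by construction, since $\mathrm{Trunc}_{k_0}$ agrees with $t_A$ on $AX_{\mc B}$ and with $t_B$ on $BX_{\mc B}$.

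To prove the homeomorphism claim, I would first note that $A, A_{\mathrm{mod}}, B, B_{\mathrm{mod}}$ are finite, so the four path spaces in question are finite unions of cylinder sets, hence compact. Every $x \in AX_{\mc B}$ factors uniquely as $x = au$ with $a \in A$ (the length-$\ell$ prefix of $x$) and $u$ an infinite path starting at the terminal vertex of $a$; since $k_0 \le \ell$ and the diagrams $\mc B$ and $\mathrm{Trunc}_{k_0}(\mc B)$ agree from level $k_0$ on, we have $\mathrm{Trunc}_{k_0}(au) = \mathrm{Trunc}_{k_0}(a)\,u$, with $\mathrm{Trunc}_{k_0}(a) \in A_{\mathrm{mod}}$ by condition (i) of Definition~\ref{def:Model} and with $u$ unchanged. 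Using that condition (i) makes $\mathrm{Trunc}_{k_0}|_A\colon A \to A_{\mathrm{mod}}$ a bijection, this formula shows $t_A$ is a well-defined bijection onto $A_{\mathrm{mod}}X_{\mathrm{Trunc}_{k_0}(\mc B)}$: it is injective because $\mathrm{Trunc}_{k_0}(a)u = \mathrm{Trunc}_{k_0}(a')u'$ forces $\mathrm{Trunc}_{k_0}(a) = \mathrm{Trunc}_{k_0}(a')$ and $u = u'$, hence $a = a'$; and it is surjective by pairing an arbitrary prefix $a_{\mathrm{mod}} \in A_{\mathrm{mod}}$ with its $\mathrm{Trunc}_{k_0}|_A$-preimage $a \in A$, which ends at the same vertex, so that $av \mapsto a_{\mathrm{mod}}v$. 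Since $\mathrm{Trunc}_{k_0}\colon X_{\mc B} \to X_{\mathrm{Trunc}_{k_0}(\mc B)}$ is continuous (preimages of cylinders are finite unions of cylinders), $t_A$ is a continuous bijection from a compact space onto a Hausdorff space, hence a homeomorphism; the same argument with condition (ii) produces $t_B$.

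I do not expect a genuine obstacle here: the entire force of the statement sits in conditions (i) and (ii) of the notion of model, which are precisely what upgrade $\mathrm{Trunc}_{k_0}$ from a surjection to a bijection on $AX_{\mc B}$ and $BX_{\mc B}$, after which the lemma is a compactness argument plus a conjugation. The only mild care needed is in identifying the infinite tails of paths across $\mc B$ and $\mathrm{Trunc}_{k_0}(\mc B)$, which is immediate as these diagrams coincide from level $k_0 \le \ell$ onwards. If it is wanted downstream, one can moreover check by induction along the recipe structure (using conditions (iii) and (iv) of Definition~\ref{def:Model} together with the assumption that $\varphi$ fits $(A,B,\rho)$) that $\varphi_{\mathrm{mod}}$ fits $(A_{\mathrm{mod}},B_{\mathrm{mod}},\rho_{\mathrm{mod}})$.
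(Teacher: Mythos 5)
Your proof is correct and takes essentially the same approach as the paper: the paper also sets $\varphi_{\mathrm{mod}} \coloneqq \mathrm{Trunc}_{k_0} \circ \varphi \circ \Psi$ where $\Psi$ is the inverse of the homeomorphism $\mathrm{Trunc}_{k_0}|_{AX_{\mc B}}\colon AX_{\mc B} \to A_{\mathrm{mod}}X_{\mathrm{Trunc}_{k_0}(\mc B)}$, which is exactly your $t_B \circ \varphi \circ t_A^{-1}$. You simply spell out the verification that the truncation restricts to a homeomorphism, which the paper treats as immediate.
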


\begin{proof}
Note that since $\mathrm{Trunc}_{k_0}$ takes $A$ bijectively to $A_{\mathrm{mod}}$, it follows that $\mathrm{Trunc}_{k_0}|_{AX_{\mc B}}$ is a homeomorphism
\begin{equation} AX_{\mc B} \to A_{\mathrm{mod}}X_{\mathrm{Trunc}_{k_0}(\mc B)} \end{equation}
Let $\Psi:A_{\mathrm{mod}}X_{\mathrm{Trunc}_{k_0}(\mc B)} \to AX_{\mc B}$ denote the inverse, and using this, define
\begin{equation} \varphi_{\mathrm{mod}} \coloneqq \mathrm{Trunc}_{k_0} \circ \varphi \circ \Psi.\qedhere \end{equation}
\end{proof}

\begin{lemma}
\label{lem:HighlySymmHypothesis}
Let $\mc B$ be a Bratteli diagram, let $(A,B,\rho)$ be a recipe which is highly symmetric.
    Suppose that $\rho$ is $(N_\ell)_{\ell=1}^\infty$-constrained, for some sequence $N_\ell \to \infty$, and let $\varphi:AX_{\mc B} \to BX_{\mc B}$ be the (unique) homeomorphism which fits $(A,B,\rho)$.
    Then for $x \in AX_{\mc B},y \in BX_{\mc B}$, if $x$ is tail-equivalent to $y$ then either:
\begin{enumerate}
\item \label{it:HighlySymmHypothesis.1}
$x$ is tail-equivalent to $\varphi(x)$,
\item \label{it:HighlySymmHypothesis.2}
$y$ is tail-equivalent to $\varphi(y)$, or
\item \label{it:HighlySymmHypothesis.3}
$\varphi(x)$ is tail-equivalent to $\varphi(y)$ and moreover there is a clopen bisection $V$ in $\mc R_{\mc B,\mathrm{tail}}$ containing $(x,y)$ such that $\{(\varphi(x'),\varphi(y')): (x',y') \in V\}$ is a clopen bisection in $\mc R_{\mc B,\mathrm{tail}}$.
\end{enumerate}
\end{lemma}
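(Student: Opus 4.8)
The plan is to prove the dichotomy by one argument: assuming that neither \ref{it:HighlySymmHypothesis.1} nor \ref{it:HighlySymmHypothesis.2} holds, I will derive \ref{it:HighlySymmHypothesis.3}. So suppose $x$ is not tail-equivalent to $\varphi(x)$ and $y$ is not tail-equivalent to $\varphi(y)$. The first step is to observe that this forces $x$ and $y$ to be \emph{recipe-critical}, meaning that when one runs the recursion of Definition~\ref{def:TEBRmore}\ref{it:TEBRfits}, the relevant prefix of $x$ lands at every level in one of the ``$A_i$''-pieces of the current descendant recipe and never in a ``$\{w_i\}$''-piece: for if it ever landed in a ``$\{w_i\}$''-piece, then from that level downward $\varphi$ would merely replace a prefix, so $x$ and $\varphi(x)$ would be tail-equivalent; and likewise for $y$. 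Hence for each level $\ell$ there are descendant recipes $(A^x_\ell,B^x_\ell,\rho^x_\ell)$ and $(A^y_\ell,B^y_\ell,\rho^y_\ell)$ at level $\ell$ with $x\in A^x_\ell X_{\mc B}$, $\varphi(x)\in B^x_\ell X_{\mc B}$, $y\in A^y_\ell X_{\mc B}$, $\varphi(y)\in B^y_\ell X_{\mc B}$. Fix also a level $\ell_0$ past which the paths $x$ and $y$ agree.

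Next I would exploit high symmetry. A model of a model is again a model (truncations compose) and the model recipes are themselves highly symmetric; iterating this, and using the constrained hypothesis ($N_\ell\to\infty$) to arrange that the effective truncation level can be pushed arbitrarily high, one obtains, for arbitrarily large $K$, a single recipe $(A_{\mathrm{mod}},B_{\mathrm{mod}},\rho_{\mathrm{mod}})$ on $\mathrm{Trunc}_K(\mc B)$ that models every descendant at some level $\ell\ge K$. Fix $K\ge\ell_0$ and such an $\ell$; in particular $\rho_{\mathrm{mod}}$ models both $\rho^x_\ell$ and $\rho^y_\ell$, so Lemma~\ref{lem:ModelHomeo} supplies a single homeomorphism $\varphi_{\mathrm{mod}}$ of the truncated path space with
\begin{equation*} \mathrm{Trunc}_K\circ\varphi = \varphi_{\mathrm{mod}}\circ\mathrm{Trunc}_K \qquad \text{on both } A^x_\ell X_{\mc B} \text{ and } A^y_\ell X_{\mc B}. \end{equation*}
That the \emph{same} homeomorphism $\varphi_{\mathrm{mod}}$ governs the descendant cylinders of both $x$ and $y$ is exactly what high symmetry buys; its failure is what makes Example~\ref{eg:NonEtaleFlipPlusOdometer}, where the two branches act by the odometer and by the flip, not satisfy the lemma.

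The conclusion now follows quickly. Since $x$ and $y$ agree past $\ell_0\le K$, we have $\mathrm{Trunc}_K(x)=\mathrm{Trunc}_K(y)$, hence $\mathrm{Trunc}_K(\varphi(x))=\varphi_{\mathrm{mod}}(\mathrm{Trunc}_K(x))=\varphi_{\mathrm{mod}}(\mathrm{Trunc}_K(y))=\mathrm{Trunc}_K(\varphi(y))$; that is, $\varphi(x)$ and $\varphi(y)$ agree past level $K$, so they are tail-equivalent. For the witnessing bisection, use continuity of $\varphi$ to choose $L\ge\ell$ so large that $\varphi$ carries the cylinder $(x|_L)X_{\mc B}$ into $(\varphi(x)|_K)X_{\mc B}$ and $(y|_L)X_{\mc B}$ into $(\varphi(y)|_K)X_{\mc B}$, and put $V\coloneqq U_{x|_L,\,y|_L}$, a basic clopen bisection in $\mc R_{\mc B,\mathrm{tail}}$ containing $(x,y)$ (valid since $x$ and $y$ reach the same vertex at level $L\ge\ell_0$). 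For $(x',y')\in V$ one has $x'=(x|_L)s$ and $y'=(y|_L)s$ with a common infinite tail $s$; since $x'\in A^x_\ell X_{\mc B}$ and $y'\in A^y_\ell X_{\mc B}$, the displayed identity gives $\mathrm{Trunc}_K(\varphi(x'))=\varphi_{\mathrm{mod}}(\mathrm{Trunc}_K(x'))$ and $\mathrm{Trunc}_K(\varphi(y'))=\varphi_{\mathrm{mod}}(\mathrm{Trunc}_K(y'))$, while $\mathrm{Trunc}_K(x')=\mathrm{Trunc}_K(y')$ (both equal $(x|_{[K,L]})s=(y|_{[K,L]})s$, using $K\ge\ell_0$) and $\varphi(x')|_K=\varphi(x)|_K=:p_x$, $\varphi(y')|_K=\varphi(y)|_K=:p_y$ by the choice of $L$. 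Hence $\varphi(x')=p_x t$ and $\varphi(y')=p_y t$ with the common tail $t\coloneqq\varphi_{\mathrm{mod}}(\mathrm{Trunc}_K(x'))$; writing the clopen set $\{t:(x',y')\in V\}$ as a finite disjoint union of cylinders $(u_j)X_{\mathrm{Trunc}_K(\mc B)}$ exhibits $\{(\varphi(x'),\varphi(y')):(x',y')\in V\}$ as $\bigsqcup_j U_{p_x u_j,\,p_y u_j}$, a clopen bisection in $\mc R_{\mc B,\mathrm{tail}}$, as required.

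The main obstacle, I expect, is the middle paragraph: unwinding Definition~\ref{def:Model} to confirm that the high-symmetry data really can be arranged with an arbitrarily large effective truncation level $K$ (so that $K$ may be taken above the tail-agreement level $\ell_0$), and that a single model recipe simultaneously controls the descendant cylinders of $x$ and of $y$. Once that is in place, the identification of $\varphi$ on each relevant cylinder as ``a fixed prefix followed by a single homeomorphism $\varphi_{\mathrm{mod}}$ of a truncated path space'' makes both the tail-equivalence of $\varphi(x),\varphi(y)$ and the construction of $V$ routine cylinder bookkeeping.
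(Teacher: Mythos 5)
Your proof takes essentially the same route as the paper's: assume neither (i) nor (ii), extract descendants $(A_x,B_x,\rho_x)$ and $(A_y,B_y,\rho_y)$ containing the starts of $x$ and $y$ at a level $\ell$ chosen so that the paths already agree before level $N_\ell$, invoke high symmetry to get a single model recipe governing both descendants, apply Lemma~\ref{lem:ModelHomeo} (together with uniqueness of the fitting homeomorphism for the model) to deduce $\mathrm{Trunc}_{k_0}\circ\varphi=\varphi_{\mathrm{mod}}\circ\mathrm{Trunc}_{k_0}$ on both cylinders, and conclude $\mathrm{Trunc}_{k_0}\varphi(x)=\mathrm{Trunc}_{k_0}\varphi(y)$.

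The one place you diverge, and in fact improve, is in the construction of the bisection $V$. The paper takes $V=U_{w_x,w_y}$ with $w_x,w_y$ the prefixes of $x,y$ at the model's truncation level $k_0$, and asserts the image is exactly $U_{z_x,z_y}$. This is slightly stronger than what Lemma~\ref{lem:ModelHomeo} delivers: the conjugacy $\mathrm{Trunc}_{k_0}\circ\varphi=\varphi_{\mathrm{mod}}\circ\mathrm{Trunc}_{k_0}$ is only established on $A_xX_{\mc B}$ and $A_yX_{\mc B}$, which may be proper subsets of $w_xX_{\mc B}$ and $w_yX_{\mc B}$ when $A_x\subsetneq w_xE_{\dots\ell}$; points of $w_xX_{\mc B}\setminus A_xX_{\mc B}$ may hit $(w_i,z_i)$-pieces between levels $k_0$ and $\ell$ where the prefix of the image under $\varphi$ need not equal $z_x$. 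Your choice $V=U_{x|_L,y|_L}$ with $L\ge\ell$ large enough (via continuity of $\varphi$) sidesteps this entirely, since then $V$ lies inside $A_xX_{\mc B}\times A_yX_{\mc B}$ and your cylinder bookkeeping $\bigsqcup_j U_{p_xu_j,p_yu_j}$ is airtight. Since the lemma only asserts the existence of some clopen bisection $V$, your version is the safer one to commit to paper.

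On the point you flag as the main obstacle: the paper arranges the truncation level $k_0$ to be large not by iterating ``models of models'' but more directly, by truncating the given model recipe down to level $N_\ell$ (the constrained hypothesis guarantees $\mathrm{Trunc}_{N_\ell}$ remains injective on the relevant descendants, so the truncated object is still a model). This is cleaner than the nested-model argument you sketch, though both converge on the same conclusion that $k_0$ can be taken past the tail-agreement level. You should also make explicit, as the paper does, that the single $\varphi_{\mathrm{mod}}$ you obtain for $A_xX_{\mc B}$ and $A_yX_{\mc B}$ coincide: this uses that the model recipe is itself $(N_k)_{k\ge k_0}$-constrained, so Lemma~\ref{lem:TEBR} gives uniqueness of the fitting homeomorphism.
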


\begin{proof}
Assume that neither \ref{it:HighlySymmHypothesis.1} nor \ref{it:HighlySymmHypothesis.2} hold.
Using the hypothesis that $N_\ell \to \infty$ and that $x,y$ are tail-equivalent, let $\ell\in\mathbb N$ be such that $\mathrm{Trunc}_{N_\ell}(x)=\mathrm{Trunc}_{N_\ell}(y)$.
Since \ref{it:HighlySymmHypothesis.1} does not hold, it follows that there is a descendant of $(A,B,\rho)$ at every level containing the start of $x$ (and likewise for $y$, since \ref{it:HighlySymmHypothesis.2} does not hold); therefore let $(A_x,B_x,\rho_x)$, $(A_y,B_y,\rho_y)$ be descendants at level $\ell$ such that the start of $x$ is in $A_x$ and the start of $y$ is in $A_y$.
By the highly-symmetric hypothesis, let $k_0 \in \mathbb N$ and let $(A_{\mathrm{mod}},B_{\mathrm{mod}},\rho_{\mathrm{mod}})$ be a recipe for $\mathrm{Trunc}_{k_0}(\mc B)$ which models $(A_x,B_x,\rho_x)$ and $(A_y,B_y,\rho_y)$ (and indeed, it models all descendants at level $\ell$).

By the constrained hypothesis, we see that $\mathrm{Trunc}_{N_\ell}$ is injective when restricted to each of the sets $A_x,B_x,A_y,B_y$; we may therefore assume that $k_0 \geq N_\ell$ (if we replace $(A_{\mathrm{mod}},B_{\mathrm{mod}},\rho_{\mathrm{mod}})$ by its truncation to $N_\ell$, then it will still be a model for $(A_x,B_x,\rho_x)$ and $(A_y,B_y,\rho_y)$).
We also know that $(A_{\mathrm{mod}},B_{\mathrm{mod}},\rho_{\mathrm{mod}})$ is $(N_\ell)_{\ell\geq k_0}$-constrained, so there is a unique homeomorphism $\varphi_{\mathrm{mod}}$ fitting it.
By Lemma~\ref{lem:ModelHomeo} (and uniqueness of $\varphi_{\mathrm{mod}}$), we have
\begin{equation} \varphi_{\mathrm{mod}}\circ\mathrm{Trunc}_{k_0} = \mathrm{Trunc}_{k_0}\circ\varphi \end{equation}
on both $A_xX_{\mc B}$ and $A_yX_{\mc B}$.
Therefore,
\begin{equation} \begin{split}
\mathrm{Trunc}_{k_0}(\varphi(x)) &= \varphi_{\mathrm{mod}}(\mathrm{Trunc}_{k_0}(x)) \\
&= \varphi_{\mathrm{mod}}(\mathrm{Trunc}_{k_0}(y)) \\
&= \mathrm{Trunc}_{k_0}(\varphi(y)),
\end{split} \end{equation}
showing that $\varphi(x),\varphi(y)$ are tail-equivalent.
Moreover, if $w_x,w_y,z_x,z_y$ are the prefixes of $x,y,\varphi(x),\varphi(y)$ of length $k_0$, then with $U_{w_x,w_y}$ as defined in \eqref{eq:TailEqBasis}, we have
\begin{equation} \{(\varphi(w),\varphi(z)): (w,z) \in U_{w_x,w_y}\} = U_{z_x,z_y}, \end{equation}
so \ref{it:HighlySymmHypothesis.3} holds with $V\coloneqq U_{w_x,w_y}$.
\end{proof}

\begin{lemma}\label{lem:TEBRcompatibility}
    Let $\mc B$ be a Bratteli diagram and let $(A,B,\rho)$ be a highly symmetric recipe which is $(N_\ell)_{\ell=1}^\infty$-constrained for some sequence $N_\ell \to \infty$, and let $\varphi:AX_{\mc B} \to BX_{\mc B}$ be the homeomorphism that fits $\rho$.
    Suppose that for any descendant recipe $(A',B',\rho')$, the set of vertices that $A'$ ends at is disjoint from the set of vertices that $B'$ ends at.
    Let $\mc R$ be the equivalence relation generated by $\mc R_{\mc B,\mathrm{tail}}$ and $\varphi$, and let $\Gamma$ be the set of $\mc R$-bisections as given by \eqref{eq:CombinedEquivRelation}.
    For $\gamma \in \Gamma$ and $(x,y) \in \gamma$, there is a bisection $\gamma_0 \in \Gamma$ contained in $\gamma$ such that $(x,y) \in \gamma_0$ and such that either:
    \begin{enumerate}
        \item \label{it:TEBRcompatibility.1} $\gamma_0$ is a clopen bisection of $\mc R_{\mc B,\mathrm{tail}}$, or
	\item \label{it:TEBRcompatibility.2} $\gamma_0$ factors as $\gamma_1\gamma_2\gamma_3$ where $\gamma_1,\gamma_3$ are clopen bisections of $\mc R_{\mc B,\mathrm{tail}}$ and $\gamma_2$ is a clopen bisection of $\mc R_\varphi$.
    \end{enumerate}
Moreover, if $(x,y) \in \mc R_{\mc B,\mathrm{tail}}$ then (by possibly shrinking $\gamma_0$), \ref{it:TEBRcompatibility.1} must occur.
\end{lemma}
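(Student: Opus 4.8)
\section*{Proof proposal}

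The plan is to normalise a general bisection $\gamma=\gamma_1\cdots\gamma_n\in\Gamma$ near the prescribed point $(x,y)$ into an alternating word consisting of tail-bisections and single $\varphi^{\pm1}$-bisections, and then to reduce the number of $\varphi$-moves by induction until at most one remains. Two preliminary remarks are needed. Applying the disjointness hypothesis to each descendant recipe $(A',B',\rho')$ individually (in particular to $(A,B,\rho)$ itself), together with the highly-symmetric hypothesis --- which forces any two descendant recipes at the same level to share the same multiset of endpoints among their $A'$-paths, and likewise among their $B'$-paths --- shows that the endpoints of $A'$-paths are disjoint from those of $B''$-paths for \emph{any} two descendant recipes $(A',B',\rho'),(A'',B'',\rho'')$ at a common level; this is exactly the hypothesis of Lemma~\ref{lem:DisjointHypothesis}, and in particular $AX_{\mc B}$ and $BX_{\mc B}$ are disjoint, so $\mc R_\varphi$ has the shape described by \eqref{eq:Rphidesc}. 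Now fix $(x,y)\in\gamma$ and the $\mc R$-path $x=x_0,x_1,\dots,x_n=y$ with $(x_{i-1},x_i)\in\gamma_i$; shrinking each $\gamma_i$ to a basic clopen sub-bisection (one of $U_{w,z}$, $\id_U$, $\{(u,\varphi(u)):u\in U\}$, $\{(\varphi(u),u):u\in U\}$) through $(x_{i-1},x_i)$ and then amalgamating all identities and consecutive tail-bisections, I obtain $\gamma_0\in\Gamma$ with $(x,y)\in\gamma_0\subseteq\gamma$ of the form $\gamma_0=\tau_0\sigma_1\tau_1\cdots\sigma_k\tau_k$, where each $\tau_j$ is a clopen bisection of $\mc R_{\mc B,\mathrm{tail}}$ and each $\sigma_j$ is a clopen bisection of $\mc R_\varphi$ of the form $\{(u,\varphi(u)):u\in U_j\}$ or $\{(\varphi(u),u):u\in U_j\}$.

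The heart of the argument is an induction on $k$. If $k=0$ we are in case~\ref{it:TEBRcompatibility.1}, and if $k=1$ we are in case~\ref{it:TEBRcompatibility.2}; so assume $k\geq2$. Trace the relevant part of the $\mc R$-path as $x\to a\to p_1\to q_1\to p_2\to\cdots$, where $\sigma_1$ carries $a$ to $p_1$, $\tau_1$ carries $p_1$ to $q_1$, and $\sigma_2$ carries $q_1$ to $p_2$. Reversing the recipe where necessary --- Remark~\ref{rmk:ReversingTEBR}, together with the fact that the standing hypotheses are symmetric under reversal, furnishes the versions of Lemmas~\ref{lem:DisjointHypothesis} and~\ref{lem:HighlySymmHypothesis} with the roles of $A,B,\varphi$ swapped for $B,A,\varphi^{-1}$ --- I may assume $\sigma_1$ is a $\varphi$-bisection, so that $a\in AX_{\mc B}$, $p_1=\varphi(a)\in BX_{\mc B}$, and $p_1$ is tail-equivalent to $q_1$. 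If $\sigma_2$ is a $\varphi^{-1}$-bisection then $q_1\in BX_{\mc B}$, and Lemma~\ref{lem:DisjointHypothesis} (with $a$, $q_1$ in the roles of its two points, using $\varphi(a)=p_1$ tail-equivalent to $q_1$) gives that either $a$ is tail-equivalent to $\varphi(a)$ or $q_1$ is tail-equivalent to $\varphi^{-1}(q_1)$; in the first case the ``moreover'' clause of that lemma lets me replace $\sigma_1$, restricted to a small enough clopen subset of its domain containing $a$, by a clopen tail-bisection, and in the second case I do the same with $\sigma_2$. If instead $\sigma_2$ is a $\varphi$-bisection then $q_1\in AX_{\mc B}$, and Lemma~\ref{lem:HighlySymmHypothesis} (with $q_1$, $p_1$ in the roles of its two points) yields one of: $q_1$ tail-equivalent to $\varphi(q_1)$, in which case I absorb $\sigma_2$ into the tail; $p_1$ tail-equivalent to $\varphi^{-1}(p_1)=a$, in which case I absorb $\sigma_1$; or the commuting conclusion, which after the routine shrinking needed to make the composites line up exhibits $\sigma_1\tau_1\sigma_2$, restricted near $(a,p_2)$, as a single clopen tail-bisection. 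In every case I replace $\gamma_0$ by a bisection in $\Gamma$ that still contains $(x,y)$, is still contained in $\gamma$, but has strictly fewer $\varphi$-moves (merging the newly created tail-bisection with its neighbours among $\tau_0,\tau_1,\tau_2$ and invoking that $\mc R_{\mc B,\mathrm{tail}}$ is \'etale), and the induction hypothesis applies.

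For the final sentence, suppose $(x,y)\in\mc R_{\mc B,\mathrm{tail}}$ and that the conclusion reached is case~\ref{it:TEBRcompatibility.2}, say $\gamma_0=\gamma_1\gamma_2\gamma_3$ with $\gamma_1,\gamma_3$ clopen tail-bisections and $\gamma_2$ a clopen $\varphi^{\pm1}$-bisection. Tracing the $\mc R$-path $x\to a\to\varphi^{\pm1}(a)\to y$ and using that $x$, $y$ --- hence $a$ and $\varphi^{\pm1}(a)$ --- are all tail-equivalent, the ``moreover'' clause of Lemma~\ref{lem:DisjointHypothesis} (or its reversed version) once more lets me shrink $\gamma_2$ to a clopen tail-bisection without losing $(x,y)$; then $\gamma_0$, so restricted, is a composite of clopen tail-bisections and hence a clopen bisection of $\mc R_{\mc B,\mathrm{tail}}$, so case~\ref{it:TEBRcompatibility.1} holds.

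The main obstacle I anticipate is the bookkeeping in the inductive step: keeping $(x,y)$ inside the bisection through each shrinking, verifying that the replacement bisection is again of the form \eqref{eq:CombinedEquivRelation}, and checking that the case analysis over the possible $\varphi/\varphi^{-1}$ patterns --- together with the correct identification of which point plays which role in Lemmas~\ref{lem:DisjointHypothesis} and~\ref{lem:HighlySymmHypothesis}, via recipe reversal --- is genuinely exhaustive. The structural content of those two lemmas does all the real work; once they are available, each individual reduction is a short local computation.
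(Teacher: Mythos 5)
Your overall strategy is the same as the paper's: normalise the given bisection near $(x,y)$ into an alternating word of tail-bisections and single $\varphi^{\pm1}$-bisections, then induct to drop the number of $\varphi$-moves, using Lemmas~\ref{lem:DisjointHypothesis} and~\ref{lem:HighlySymmHypothesis} as the engine. Your preliminary observation --- that the per-recipe disjointness hypothesis of this lemma, together with high symmetry, upgrades to the cross-recipe disjointness hypothesis of Lemma~\ref{lem:DisjointHypothesis} --- is correct and is a point the paper's proof glosses over.

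However, in the inductive step you have the two cases attached to the wrong lemmas, and in one of them this creates a real gap. Translating to your notation: the word $\sigma_1\tau_1\sigma_2$ near $(a,p_2)$ has $\sigma_1=\varphi$ (so $a\in AX_{\mc B}$, $p_1=\varphi(a)\in BX_{\mc B}$), and either $\sigma_2=\varphi$ (so $q_1\in AX_{\mc B}$, the ``same-direction'' configuration) or $\sigma_2=\varphi^{-1}$ (so $q_1\in BX_{\mc B}$, the ``opposite-direction'' configuration). The paper handles the same-direction configuration with Lemma~\ref{lem:DisjointHypothesis} applied to $a,q_1\in AX_{\mc B}$ with $\varphi(a)\sim q_1$: the disjointness forces one of the two $\varphi$-moves to be absorbable into the tail, and there is \emph{no} third possibility. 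The opposite-direction configuration is the genuinely delicate one, and the paper handles it with Lemma~\ref{lem:HighlySymmHypothesis} applied to the reversed recipe, with $p_1,q_1\in BX_{\mc B}$ and $p_1\sim q_1$: there one gets the trichotomy $p_1\sim\varphi^{-1}(p_1)=a$, or $q_1\sim\varphi^{-1}(q_1)=p_2$, or $a\sim p_2$ together with the ``moreover'' bisection statement that lets one replace $\sigma_1\tau_1\sigma_2$ (shrunk near $(a,p_2)$) by a single clopen tail-bisection. You have done exactly the reverse: you invoke Lemma~\ref{lem:HighlySymmHypothesis} in the same-direction case (where its formal hypotheses are not even met --- your $q_1$ and $p_1$ lie in $AX_{\mc B}$ and $BX_{\mc B}$ respectively, whereas the proof of that lemma tracks both points via descendant $A'$-sets, so both points need to lie in the domain of the relevant recipe; this happens to be harmless since the extra ``commuting'' alternative is vacuous here by Lemma~\ref{lem:DisjointHypothesis}), and you invoke Lemma~\ref{lem:DisjointHypothesis} in the opposite-direction case, asserting the dichotomy ``$a\sim\varphi(a)$ or $q_1\sim\varphi^{-1}(q_1)$''.

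That dichotomy is false, and this is the gap. In the opposite-direction case one can perfectly well have $a\nsim\varphi(a)$ and $q_1\nsim\varphi^{-1}(q_1)$ while $\varphi(a)\sim q_1$: concretely, in the construction of Section~\ref{sec:Construction} take $a$ and $p_2$ to be two critical points (those traversing $A'$-sets at every level) that differ only in finitely many edge-indices, so that $p_1=\varphi(a)$ and $q_1=\varphi(p_2)$ are tail-equivalent, both traverse $B'$-sets forever, and neither of the first two alternatives holds. In this situation the only way to make progress is the third alternative of Lemma~\ref{lem:HighlySymmHypothesis}, namely that $a\sim p_2$ together with the accompanying bisection statement, which is exactly what you would need to rewrite $\sigma_1\tau_1\sigma_2$ as a tail-bisection near $(a,p_2)$. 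Your argument as written never produces this, so the induction breaks down precisely in the case where the highly-symmetric hypothesis is doing its work. (You may have been misled by the statements of Lemmas~\ref{lem:DisjointHypothesis} and~\ref{lem:HighlySymmHypothesis}, which read ``$x\in AX_{\mc B},\,y\in BX_{\mc B}$'' but whose proofs manifestly require both points in the domain $AX_{\mc B}$ of $\varphi$; the paper's own applications in the proof of this lemma are all of the latter form.) To repair your proof, swap the two lemma invocations: use Lemma~\ref{lem:DisjointHypothesis} with $a,q_1$ when $\sigma_2=\varphi$, and use Lemma~\ref{lem:HighlySymmHypothesis} for the reversed recipe with $p_1,q_1\in BX_{\mc B}$ when $\sigma_2=\varphi^{-1}$, retaining the three-way case analysis there.
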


\begin{proof}
We prove the main statement by induction.
Since every $\gamma \in \Gamma$ is a finite product of clopen bisections from $\mc R_{\mc B,\mathrm{tail}}$ and $\mc R_\varphi$, by induction we may assume that
\begin{equation} \gamma = \gamma'\delta \end{equation}
where:
\begin{itemize}
\item $\gamma'$ is either a clopen bisection in $\mc R_{\mc B,\mathrm{tail}}$ or it factors as in \ref{it:TEBRcompatibility.2},
\item $\delta$ is a clopen bisection from either $\mc R_{\mc B,\mathrm{tail}}$ or $\mc R_\varphi$, and
\item $(x,y)$ factors as $(x,z)(z,y)$ where $(x,z) \in \gamma'$ and $(z,y)\in \delta$.
\end{itemize}

If $\gamma'$ is a clopen bisection in $\mc R_{\mc B,\mathrm{tail}}$ then it is clear that the statement holds for $(x,y) \in \gamma=\gamma'\delta$.
Likewise, if $\delta \in \mc R_{\mc B,\mathrm{tail}}$ and $\gamma'$ factors as in \ref{it:TEBRcompatibility.2}, then so does $\gamma'\delta$.

We are left to consider the interesting case: when $\gamma'=\gamma_1\gamma_2\gamma_3$ where $\gamma_1,\gamma_3$ are clopen bisections in $\mc R_{\mc B,\mathrm{tail}}$, and $\gamma_2 ,\delta$ are clopen bisections in $\mc R_\varphi$.
Let us further factorize $(x,z)$ as the product of $(x,z_1)\in \gamma_1$, $(z_1,z_2)\in \gamma_2$, and $(z_2,z_3) \in \gamma_3$.
If $z_1,z_2$ are tail-equivalent then by the last statement of Lemma~\ref{lem:DisjointHypothesis}, we can assume that $\gamma_2 \subseteq \mc R_{\mc B,\mathrm{tail}}$, so that $\gamma'\subseteq \mc R_{\mc B,\mathrm{tail}}$ (a case we already handled).
Therefore we will assume that $z_1,z_2$ are not tail-equivalent.
Likewise, we may assume that $z_3,y$ are not tail-equivalent.

Since $(z_1,z_2) \in \mc R_{\varphi}$, by \eqref{eq:Rphidesc}, we either have $z_1=z_2, z_2=\varphi(z_1),$ or $z_1=\varphi(z_2)$ (but the first cannot occur since we assume $z_1,z_2$ are not tail-equivalent).
Likewise, either $y=\varphi(z_3)$ or $z_3=\varphi(y)$.
Altogether, this gives us four cases to consider.

If $z_2=\varphi(z_1)$ and $y=\varphi(z_3)$, then by Lemma~\ref{lem:DisjointHypothesis} (applied using $z_1$ for $x$ and $z_3$ for $y$), either $z_1,z_2$ are tail-equivalent or $y,z_3$ are tail-equivalent.
By assumption, neither of these can hold, so this case does not occur.

If $z_1=\varphi(z_2)$ and $y=\varphi(z_3)$ then by Lemma~\ref{lem:HighlySymmHypothesis} (applied using $z_2$ for $x$ and $z_3$ for $y$), and assuming that either $z_1,z_2$ nor $y,z_3$ are tail-equivalent, then $y,z_3$ are tail-equivalent and, by possibly shrinking $\gamma_3$, we have that $\gamma_2\gamma_3\delta$ is a clopen bisection in $\mc R_{\mc B,\mathrm{tail}}$.
In that case, $\gamma'=\gamma_1(\gamma_2\gamma_3\delta)$ is a clopen bisection in $\mc R_{\mc B,\mathrm{tail}}$.

If $z_2=\varphi(z_1)$ and $z_3=\varphi(y)$ then we can apply the same argument as above with $\varphi^{-1}$ in place of $\varphi$; note that $\varphi^{-1}$ can be constructed by reversing the recipe for $\varphi$, and this reversed recipe satisfies the same hypotheses.
Hence in this case, we can also conclude that $\gamma'$ contains a clopen bisection in $\mc R_{\mc B,\mathrm{tail}}$ which contains $(x,y)$.

Finally, if $z_1=\varphi(z_2)$ and $z_3=\varphi(y)$ then by applying Lemma~\ref{lem:DisjointHypothesis} (with $y$ for $x$ and $z_2$ for $y$), either $y,z_3$ or $z_1,z_2$ are tail-equivalent, but again by assumption neither can occur.

For the final statement of the lemma, suppose that $(x,y) \in \mc R_{\mc B,\mathrm{tail}}$ and \ref{it:TEBRcompatibility.2} holds.
In this case, factor $(x,y)$ as the product of $(x,z_1) \in \gamma_1$, $(z_1,z_2)\in\gamma_2$, and $(z_2,y)\in \gamma_3$.
Then since $(x,y),(x,z_1),(z_2,y)\in\mc R_{\mc B,\mathrm{tail}}$, it follows that $(z_1,z_2)\in\mc R_{\mc B,\mathrm{tail}}$ as well.
Hence by the final statement of Lemma~\ref{lem:DisjointHypothesis}, by possibly shrinking $\gamma_2$, we may assume it is a clopen bisection in $\mc R_{\mc B,\mathrm{tail}}$.
Hence so is $\gamma = \gamma_1\gamma_2\gamma_3$.
\end{proof}

\begin{corollary}
\label{cor:EtaleUnderConditions}
    Let $\mc B$ be a Bratteli diagram and let $(A,B,\rho)$ be a highly symmetric recipe which is $(N_\ell)_{\ell=1}^\infty$-constrained for some sequence $N_\ell \to \infty$, and let $\varphi:AX_{\mc B} \to BX_{\mc B}$ be the homeomorphism that fits $\rho$.
    Suppose that for any descendant recipe $(A',B',\rho')$, the set of vertices that $A'$ ends at is disjoint from the set of vertices that $B'$ ends at.
    Let $\mc R$ be the equivalence relation generated by $\mc R_{\mc B,\mathrm{tail}}$ and $\varphi$, and let $\Gamma$ be the set of $\mc R$-bisections as given by \eqref{eq:CombinedEquivRelation}.
Then $\mc R$ is the basis of an \'etale equivalence relation on $\mc R$.
\end{corollary}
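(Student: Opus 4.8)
The plan is to apply Proposition~\ref{prop:EtaleCombinedRelation}. That proposition reduces the entire corollary to verifying a single hypothesis, condition (v') of Proposition~\ref{prop:EtaleCriterion}: for every $\gamma \in \Gamma$ and every $x \in X_{\mc B}$ with $(x,x) \in \gamma$, there is a neighbourhood $U$ of $x$ in $X_{\mc B}$ with $\id_U \subseteq \gamma$. (Proposition~\ref{prop:EtaleCombinedRelation} itself checks that the remaining conditions hold automatically for $\Gamma$, so there is no further bookkeeping there.) First I would record that the hypotheses of the corollary are literally those of Lemma~\ref{lem:TEBRcompatibility}, and that applying the disjointness hypothesis to the descendant recipe $(A,B,\rho)$ itself shows that the vertices where $A$ ends are disjoint from the vertices where $B$ ends, so that $AX_{\mc B}$ and $BX_{\mc B}$ are disjoint; hence $\mc R_\varphi$ is the \'etale equivalence relation described by \eqref{eq:Rphidesc}, and the family $\Gamma$ of \eqref{eq:CombinedEquivRelation} is well-defined.

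Now fix $\gamma \in \Gamma$ and $x$ with $(x,x) \in \gamma$. Since $(x,x)$ is a diagonal point it lies in $\mc R_{\mc B,\mathrm{tail}}$, so the ``moreover'' clause of Lemma~\ref{lem:TEBRcompatibility}, applied with $y = x$, yields a bisection $\gamma_0 \in \Gamma$ with $(x,x) \in \gamma_0 \subseteq \gamma$ and with $\gamma_0$ a clopen bisection of $\mc R_{\mc B,\mathrm{tail}}$. It then remains only to extract a neighbourhood $U$ of $x$ with $\id_U \subseteq \gamma_0$ from a clopen bisection of tail-equivalence passing through the unit $(x,x)$. For this I would use that $\mc R_{\mc B,\mathrm{tail}}$ is itself \'etale, so that its unit space is open inside it: $\gamma_0 \cap \mc R_{\mc B,\mathrm{tail}}^{(0)}$ is then open in $\mc R_{\mc B,\mathrm{tail}}^{(0)} = X_{\mc B}$ and has the form $\id_U$ for some open $U \subseteq X_{\mc B}$, and $(x,x) \in \gamma_0$ forces $x \in U$, whence $\id_U \subseteq \gamma_0 \subseteq \gamma$. (Concretely, $\gamma_0$ is a finite union of the basic clopen bisections $U_{w,z}$ of \eqref{eq:TailEqBasis}, and since $w$ and $z$ end at the same vertex, hence at the same level, the only way $(x,x)$ can lie in $U_{w,z}$ is for $w = z$, in which case $U_{w,z} = \id_{wX_{\mc B}}$ already supplies the neighbourhood; passing through the \'etale structure of $\mc R_{\mc B,\mathrm{tail}}$ is just cleaner.) This verifies (v'), and Proposition~\ref{prop:EtaleCombinedRelation} then gives the conclusion.

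The substantive work has already been carried out in Lemma~\ref{lem:TEBRcompatibility}, together with its combinatorial inputs Lemmas~\ref{lem:DisjointHypothesis} and~\ref{lem:HighlySymmHypothesis}, so I do not anticipate a genuine obstacle in this corollary. The only point that warrants a moment's care is confirming that a ``clopen bisection of $\mc R_{\mc B,\mathrm{tail}}$'' is, by definition, an open subset of the \'etale groupoid $\mc R_{\mc B,\mathrm{tail}}$, so that the unit-space intersection argument in the previous paragraph applies; this is immediate from the description of the inductive-limit topology on $\mc R_{\mc B,\mathrm{tail}}$ recalled around \eqref{eq:TailEqBasis}. A secondary sanity check is that invoking the ``moreover'' clause of Lemma~\ref{lem:TEBRcompatibility} needs only $(x,x) \in \mc R_{\mc B,\mathrm{tail}}$, which holds trivially, so we never have to decide which of the two alternatives in that lemma occurs.
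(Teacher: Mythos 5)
Your proof is correct and takes essentially the same route as the paper's: invoke Proposition~\ref{prop:EtaleCombinedRelation}, reduce to condition~(v'), and use the ``moreover'' clause of Lemma~\ref{lem:TEBRcompatibility} at a unit $(x,x)$ to obtain a clopen bisection $\gamma_0\subseteq\gamma$ lying entirely in $\mc R_{\mc B,\mathrm{tail}}$, from which the required neighbourhood $\id_U$ is extracted via the \'etale structure of tail equivalence. Your additional remark that the disjointness hypothesis, applied to $(A,B,\rho)$ itself, forces $AX_{\mc B}\cap BX_{\mc B}=\emptyset$ (hence that $\mc R_\varphi$ is \'etale via \eqref{eq:Rphidesc}) is a useful detail that the paper leaves implicit when it asserts ``Both $\mc R_{\mc B,\mathrm{tail}}$ and $\mc R_{\varphi}$ are \'etale.''
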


\begin{proof}
We appeal to Proposition~\ref{prop:EtaleCombinedRelation}.
Both $\mc R_{\mc B,\mathrm{tail}}$ and $\mc R_{\varphi}$ are \'etale.
Let $\gamma\in \Gamma$ and $x\in X$ such that $(x,x) \in \gamma$.
Using the final statement of Lemma~\ref{lem:TEBRcompatibility}, we have that $\gamma$ contains a bisection $\gamma_0$ in $\mc R_{\mc B,\mathrm{tail}}$.
Since $\mc R_{\mc B,\mathrm{tail}}$ is \'etale, it follows that there is a neighbourhood $U$ of $x$ such that $(y,y)\in\gamma_0\subseteq \gamma$ for all $y\in U$.
\end{proof}

\section{The construction}
\label{sec:Construction}

\subsection{A Bratteli diagram}
\label{sec:ConstructionBratteli}
Let $D$ be a simple dimension group and let $E\subseteq \Q$ be a subgroup which strictly contains $\Z$.
Pick a sequence $(r_\ell)_{\ell=1}^\infty$ of natural numbers such that $r_1>1$, $r_\ell|r_{\ell+1}$ for all $\ell$, and
\begin{equation} E = \Big\{\frac p{r_\ell}: \ell\in\mathbb N\text{ and }p \in \Z\Big\}. \end{equation}
Set 
\begin{equation}\label{eq:d_ell}
d_\ell \coloneqq r_\ell/r_{\ell-1}, \quad\text{ for all $\ell$}. \end{equation} 
Take a Bratteli diagram $\mc B_D=(V_D,E_D)$ (starting at level $\ell_0=0$) such that $H_0(\mc R_{\mc B_D,\mathrm{tail}})\cong D$, and additionally such that, for all $v \in (V_D)_{\ell-1}$ and $w\in (V_D)_{\ell}$,
\begin{equation} \label{eq:EDsize} |E_D(v,w)| \geq r_\ell+d_\ell. \end{equation}
(Recall that $E_D(v,w)$ are the labels on edges from $v$ to $w$, so this says that there are at least $r_\ell+d_\ell$ edges between these vertices.
This can be arranged since $D$ is simple; see the comments after Theorem~2.5 of \cite{Putnam10}.)

We will use the splitting construction from Section \ref{sec:Splitting}; for each $\ell\geq 1$ we pick any vertex $w_a^\ell \in (V_D)_\ell$ to split, and let us label the remaining vertices in $(V_D)_\ell$ as $v_{1,a}^\ell,\dots,v_{k_\ell,a}^\ell$.
Therefore our new Bratteli diagram $\mc C$ with have the level-$\ell$ vertex set
\begin{equation} V_\ell =\{v_{1,a}^\ell,\dots,v_{k_\ell,a}^\ell,w_a^\ell,w_b^\ell\}. \end{equation}
The idea will be that $v_{i,a}^\ell$- and $w_a^\ell$-cylinders represent $0$ in the component $E/\Z$ of the type semigroup, while $w_b^\ell$-cylinders represent $\frac1{r_\ell}+\Z$ in this component.
This motivates a choice of edge splittings which is consistent with this.
We set:
\begin{equation} \label{eq:Fchoice1}
F(v_{i,a}^\ell)\coloneqq \emptyset, \end{equation}
while choosing $F(w_a^\ell),F(w_b^\ell)$ nonempty such that 
\begin{equation} \label{eq:Fchoice2}
|F(w_a^\ell)|= r_{\ell+1},\qquad |F(w_b^\ell)|= d_{\ell+1}.\end{equation}
(This is possible by \eqref{eq:EDsize}.)
We let $\mc C$ be the Bratteli diagram produced by the splitting construction in Section~\ref{sec:Splitting}, with the data just described.
Here are two generic layers of this Bratteli diagram; to avoid clutter we only show the number of split edges (in red).
\begin{equation}
    \begin{tikzcd}[ampersand replacement=\&,row sep=10mm]
    v_{1,a}^\ell \ar[d] \ar[drr] \ar[drrr] \& \cdots \& v_{1,k_\ell}^\ell \ar[dll] \ar[d] \ar[dr] \& w_a^\ell  \ar[dlll] \ar[dl] \ar[d] \& w_b^\ell \ar[d,red,"d_{\ell+1}" description] \ar[dllll] \ar[dll] \ar[dl] \\
    v_{1,a}^{\ell+1} \& \cdots \& v_{1,k_{\ell+1}}^{\ell+1} \& w_a^{\ell+1} \& w_b^{\ell+1} 
    \ar[from=1-4,to=2-5,red,"r_{\ell+1}" description,crossing over]
    \end{tikzcd}
\end{equation}

\subsection{A partial homeomorphism}
Fix some $\ell$ and consider some set $A=\{z_{A,1},\dots,z_{A,r_\ell}\} \subseteq E_{0\dots \ell}$ consisting of $r_\ell$ paths that end at $w_a^\ell$ and some set $B=\{z_{B,1},\dots,z_{B,r_\ell}\} \subseteq E_{0\dots \ell}$ consisting of $r_\ell$ paths that end at $w_b^\ell$.
(Even for $\ell=1$, it is possible to pick such sets, by \eqref{eq:EDsize}).
Assume (for the purpose of checking the constrained condition later) that all paths in $A$ agree on the first $\ell-1$ edges and all paths in $B$ agree on the first $\ell-1$ edges.
We will build a recipe $(A,B,\rho)$ that satisfies the hypotheses of Lemmas~\ref{lem:TEBR} and \ref{lem:TEBRshadow} and Corollary~\ref{cor:EtaleUnderConditions}

First, let us consider $AE_{\ell,\ell+1}$ and $BE_{\ell,\ell+1}$:
\begin{itemize}
    \item Both $AE_{\ell, \ell+1}$ and $BE_{\ell,\ell+1}$ have $r_\ell\big|E_D(w_a,v_{i,a}^{\ell+1})\big|$ paths that end at $v_{i,a}^{\ell+1}$, for $i=1,\dots,k_{\ell+1}$.
    \item $AE_{\ell,\ell+1}$ has $r_\ell\left(\big|E_D(w_a^\ell,w_a^{\ell+1})\big|-r_{\ell+1}\right)$ paths ending at $w_a^{\ell+1}$, while $BE_{\ell,\ell+1}$ has $r_\ell\left(\big|E_D(w_a^\ell,w_a^{\ell+1})\big|-d_{\ell+1}\right)$.
    \item $AE_{\ell,\ell+1}$ has $r_\ell r_{\ell+1}$ paths ending at $w_b^{\ell+1}$, while $BE_{\ell,\ell+1}$ has $r_\ell d_{\ell+1}$.
\end{itemize}

We will use tail-equivalence as much as possible.
The paths that cannot be matched up by tail equivalence consist of $r_\ell(r_{\ell+1}-d_{\ell+1})$ paths in $AE_{\ell,\ell+1}$, all ending at $w_b^{\ell+1}$, and the same number of paths in $BE_{\ell,\ell+1}$, all ending at $w_a^{\ell+1}$.
We will group these into sets $A_i$ and $B_i$ (respectively) of size $r_{\ell+1}$; note that by \eqref{eq:d_ell}, we have
\begin{equation} r_\ell(r_{\ell+1}-d_{\ell+1}) = r_{\ell+1}(r_\ell-1),
\end{equation}
so this is requires $(r_\ell-1)$ such sets.

Let $M_A$ be the set of edges from $w_a^\ell$ to $w_b^{\ell+1}$; then $|M_A|=r_{\ell+1}$.
Since there are $\left(\big|E_D(w_a^\ell,w_a^{\ell+1})\big|-d_{\ell+1}\right) \geq r_{\ell+1}$ edges from $w_b^{\ell}$ to $w_a^{\ell+1}$ (using \eqref{eq:EDsize}), we can also choose a subset $M_B$ of them of size $r_{\ell+1}$.
Then set
\begin{equation} A_i\coloneqq z_{A,i}M_A, \quad B_i\coloneqq z_{B,i}M_B, \quad i=1,\dots,r_\ell-1. \end{equation}

By this construction, $\mathrm{Trunc}_\ell$ induces bijections
\begin{equation} A_i \to M_A, \quad B_i \to M_B, \end{equation}
where we view $M_A,M_B$ as paths of length $1$ in $\mathrm{Trunc}_\ell(\mc C)$.

We may apply the present construction to $(M_B,M_A)$ and $\ell+1$ with the truncated diagram $\mathrm{Trunc}_\ell(\mc C)$; note that all paths in $M_A$ and in $M_B$ agree on the first $0=\ell-1$ edges, so the initial conditions apply.
This yields a recipe $(M_B,M_A,\hat \rho_{\mathrm{mod}})$.
Reversing this recipe (as in Remark~\ref{rmk:ReversingTEBR}) produces a recipe $(M_A,M_B,\rho_{\mathrm{mod}})$, and then using this as a model (i.e., pulling back using $\mathrm{Trunc}_\ell$), we obtain a recipe $(A_i,B_i,\rho_i)$ for $i=1,\dots,r_\ell-1$.

We group the remaining paths in $AE_{\ell,\ell+1}$ and $BE_{\ell,\ell+1}$, into pairs $(w_1,z_1),\dots,(w_m,z_m)$, with each pair ending at the same vertex.
The recipe is then
\begin{equation} \Big(A,B,\rho=\big((w_1,z_1),\dots,(w_m,z_m),(A_1,B_1,\rho_1),\dots,(A_{r_\ell-1},B_{r_\ell-1},\rho_{r_\ell-1})\big)\,\Big). \end{equation}
For the initial recipe we let $\ell=1$ (but we need to allow $\ell\geq 1$ in the description above to allow the construction of the models recipes).

\begin{proposition}
    The above construction yields a recipe which is $(\ell-1)_{\ell=1}^\infty$-constrained, and therefore yields a homeomorphism $\varphi:A X_{\mc C} \to B X_{\mc C}$.
    The recipe satisfies the hypotheses of Corollary~\ref{cor:EtaleUnderConditions}, and therefore the  equivalence relation $\mc R$ generated by $\mc R_{\mc C,\mathrm{tail}}$ and $\varphi$ is \'etale.
    Moreover, the shadow Bratteli diagram homomorphism $T:\mathcal C \to \mathcal B_D$ given by Proposition~\ref{prop:SplittingShadow} satisfies the conditions of Lemma~\ref{lem:TEBRshadow}, and therefore the shadow homeomorphism $s_X:X_{\mc C} \to X_{\mc D}$ induces a surjective homeomorphism 
\begin{equation} \alpha:S(\mc R) \to S(\mc R_{\mc B_D,\mathrm{tail}})\cong D_+. \end{equation}
\end{proposition}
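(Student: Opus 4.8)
The statement bundles three hypothesis checks, after which everything follows formally: that the recipe is $(\ell-1)_{\ell=1}^\infty$-constrained (so Lemma~\ref{lem:TEBR} produces $\varphi$), that $(A,B,\rho)$ meets the hypotheses of Corollary~\ref{cor:EtaleUnderConditions} (so $\mc R$ is \'etale), and that $T$ meets the hypothesis of Lemma~\ref{lem:TEBRshadow} (so $s_X$ induces a surjection $\alpha\colon S(\mc R)\to S(\mc R_{\mc B_D,\mathrm{tail}})$; and $S(\mc R_{\mc B_D,\mathrm{tail}})\cong D_+$ by the realization of $S(\mc R_{\mc B,\mathrm{tail}})$ recalled in Section~\ref{sec:Prelim} (cf.\ \cite[Theorem~2.9]{Putnam10}), together with the choice $H_0(\mc R_{\mc B_D,\mathrm{tail}})\cong D$ and Proposition~\ref{prop:HomologyEnvelopingTypeSemigroup}).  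That the construction actually produces a \emph{recipe} in the sense of Definition~\ref{def:TEBR} is clear apart from the partition in Definition~\ref{def:TEBR}(iii): one invokes the displayed count of endpoints, which shows that after removing the $A_i$ from $AE_{\ell,\ell+1}$ and the $B_i$ from $BE_{\ell,\ell+1}$ the residual multisets of endpoints on the two sides agree, so that the leftover paths can be paired up as the $(w_i,z_i)$.

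The common tool for all three checks is the \emph{self-similarity} of the construction.  Unwinding the recursion, for each $\ell\geq1$ the descendant recipes of $(A,B,\rho)$ at level $\ell$ are exactly the $\mathrm{Trunc}_{\ell-1}$-pullbacks of a single recipe $\sigma_\ell$ over the truncated diagram $\mathrm{Trunc}_{\ell-1}(\mc C)$ (whose top sets end at level $\ell$): one has $\sigma_1=(A,B,\rho)$, the sub-recipes of $\sigma_\ell$ are the $\mathrm{Trunc}_\ell$-pullbacks of $\sigma_{\ell+1}$, and each $\sigma_\ell$ is produced by the very same construction, applied at level $\ell$ over $\mathrm{Trunc}_{\ell-1}(\mc C)$ to a pair of sets of size $r_\ell$ playing the roles of $A,B$, possibly composed with the reversal of Remark~\ref{rmk:ReversingTEBR}.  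Being constrained, being highly symmetric, the descendant-disjointness condition, and the shadow condition are all insensitive to reversal and to $\mathrm{Trunc}$-pullback (which merely prepends a fixed prefix), so it suffices to read each property off from one application of the construction.

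Constrainedness: the top sets lie in $E_{0\dots1}$ and agree on their first $N_1=0$ edges, while a descendant at level $\ell$ is a $\mathrm{Trunc}_{\ell-1}$-pullback and hence consists of paths sharing a common prefix of length $\ell-1=N_\ell$; since $N_\ell\to\infty$, Lemma~\ref{lem:TEBR} applies and gives the homeomorphism $\varphi\colon AX_{\mc C}\to BX_{\mc C}$.  High symmetry is the ``Equivalently'' form of Definition~\ref{def:Model} applied to the chain $(\sigma_\ell)_\ell$: the recipe $\sigma_\ell$ models every descendant at level $\ell$.  Disjointness: in a single application of the construction, $A$ ends only at $w_a^\ell$ and $B$ only at $w_b^\ell$, while each sub-recipe $(A_i,B_i,\rho_i)$ has $A_i$ ending only at $w_b^{\ell+1}$ (as $M_A$ is a set of edges into $w_b^{\ell+1}$) and each $B_i$ only at $w_a^{\ell+1}$; since reversal only swaps the two roles and pullback does not change endpoints, every descendant $(A',B',\rho')$ has $A'$ and $B'$ ending at two distinct single vertices, so Corollary~\ref{cor:EtaleUnderConditions} applies and $\mc R$ is \'etale.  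Shadow condition: $T$ collapses each $w_b^{j}$ onto $w_a^{j}$ and fixes every other vertex and all edge labels, so for a descendant $(A',B',\rho')$ at level $j$ and a sub-recipe $(A_i,B_i,\rho_i)$ of it, $T$ carries both the $r_{j+1}$ paths of $A_i$ and the $r_{j+1}$ paths of $B_i$ --- one family ending at $w_b^{j+1}$ and the other at $w_a^{j+1}$ (or vice versa in the reversed case) --- to $r_{j+1}$ paths ending at $w_a^{j+1}$, so $T(A_i)$ and $T(B_i)$ end at the same vertices counted with multiplicity; Lemma~\ref{lem:TEBRshadow} then produces $\alpha$ and identifies its target with $D_+$.

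I expect the one nontrivial point to be making the self-similarity precise: one must track, through the recursion, which truncated diagram $\mathrm{Trunc}_{\ell-1}(\mc C)$ the model recipe $\sigma_\ell$ lives over, that its top sets end at level $\ell$, whether it occurs forward or reversed, and that the endpoint and cardinality bookkeeping forced by the choices \eqref{eq:Fchoice1} and \eqref{eq:Fchoice2} --- possible thanks to \eqref{eq:EDsize} --- stays consistent at every depth.  Once that structural picture is fixed, each of the three hypotheses is a short read-off of endpoints and cardinalities.
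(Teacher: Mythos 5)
Your proof is correct and takes essentially the same approach as the paper's: both verify the three hypotheses (constrainedness, high symmetry via a recursive chain of model recipes $(M_A,M_B,\rho_{\mathrm{mod}})$, and the endpoint conditions on descendants) and then invoke Lemma~\ref{lem:TEBR}, Corollary~\ref{cor:EtaleUnderConditions}, and Lemma~\ref{lem:TEBRshadow}. Your added observation that the residual endpoint multisets of $AE_{\ell,\ell+1}$ and $BE_{\ell,\ell+1}$ agree after removing the $A_i$'s and $B_i$'s --- so the leftover paths really can be paired into the $(w_i,z_i)$'s, making the construction a well-formed recipe in the sense of Definition~\ref{def:TEBR}(iii) --- is a step the paper leaves implicit.
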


\begin{proof}
By assumption all paths in $A$ agree on the first $\ell-1$ edges and all paths in $B$ agree on the first $\ell-1$ edges, and by the iterative nature of the construction this also holds for descendants, so the recipe is $(\ell-1)_{\ell=1}^\infty$-constrained.

This recipe is highly symmetric, since $(M_A,M_B,\rho_{\mathrm{mod}})$ models each $(A_i,B_i,\rho_i)$, and recursively $(M_A,M_B,\rho_{\mathrm{mod}})$ is highly symmetric.
For every descendant recipe $(A',B',\rho')$, we have that all vertices in $A'$ end at $w_a^{\ell'}$ and all vertices in $B'$ end at $w_b^{\ell'}$, or vice versa; in either case, the set of vertices that $A'$ ends at is disjoint from the set for $B'$.
This shows that the hypotheses of Corollary~\ref{cor:EtaleUnderConditions} holds.

    Finally, we have that each $A_i$ (resp.\ $B_i$) consists of $r_{\ell+1}$ paths ending at $w_b^{\ell+1}$ (resp.\ $w_a^{\ell+1}$).
    Since  $T(w_a^{\ell+1})=w_a^{\ell+1}=T(w_b^{\ell+1})$, it follows that the vertices that the paths in $T(A_i)$ and $T(B_i)$ end at are the same (with the same multiplicity).
    Recursively, this holds also for all descendants.
Hence the hypotheses of Lemma~\ref{lem:TEBRshadow} hold.
\end{proof}

\begin{remark}
When $r_\ell=2$ for all $\ell$, then each stage of the recipe has only one descendant, so the recipe will automatically be highly symmetric.
In this case, we don't need to ensure that all paths in $B_i$ agree on all but the last edge, allowing $\mc B_D$ to have fewer edges.
The examples in Section~\ref{sec:Examples} are constructed in this way.
\end{remark}

We continue to use $\mc R$ and $\alpha$ from the previous proposition.

\begin{proposition}
Let $\mc R$ be the equivalence relation constructed above.
There is a homomorphism $\beta:S(\mc R) \to E/\Z$, satisfying
\begin{equation} 
\beta([wX_{\mc C}]) = \begin{cases} 0,\quad &\text{if $w$ ends at $v_{a,i}^\ell$ or $w_a^\ell$}; \\ \frac1{r_\ell}+\Z,\quad &\text{if $w$ ends at $w_b^\ell$}. \end{cases}
\end{equation}
\end{proposition}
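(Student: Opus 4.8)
The plan is to construct $\beta$ by first defining an additive map on the semigroup of compact open subsets of $X_{\mc C}$ (under disjoint union) and then checking that it respects the defining relations of $S(\mc R)$ listed in Definition~\ref{def:TypeSemigroup}; relation (2) will be built in, so the work is in the other relation. Concretely, for a cylinder $wX_{\mc C}$ with $w$ ending at a vertex $u$ in level $\ell$, set $\beta([wX_{\mc C}])$ equal to $0$ if $u\in\{v_{i,a}^\ell,w_a^\ell\}$ and to $\tfrac1{r_\ell}+\Z$ if $u=w_b^\ell$ (this is a genuine element of $E/\Z$ since $\tfrac1{r_\ell}\in E$), and extend to an arbitrary compact open set by writing it as a finite disjoint union of cylinders and summing. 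I then need: (a) this value is independent of the cylinder decomposition; (b) $\beta([s(V)])=\beta([r(V)])$ for every compact open bisection $V\subseteq\mc R$. Together with additivity these give a well-defined homomorphism $\beta:S(\mc R)\to E/\Z$ with the asserted values.

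For (a), any two cylinder decompositions of a compact open set admit a common refinement obtained by repeatedly subdividing a cylinder $wX_{\mc C}$ into the cylinders $weX_{\mc C}$ over the edges $e$ leaving the terminal vertex $u$ of $w$, so it suffices to verify $\beta([wX_{\mc C}])=\sum_e\beta([weX_{\mc C}])$ for one such subdivision. This is where the edge-splitting choices pay off: if $u=v_{i,a}^\ell$, then by \eqref{eq:Fchoice1} no edge out of $u$ reaches $w_b^{\ell+1}$, so both sides are $0$; if $u=w_a^\ell$, the edges out of $u$ reaching $w_b^{\ell+1}$ are exactly those in $F(w_a^\ell)$, numbering $r_{\ell+1}$ by \eqref{eq:Fchoice2}, so $\sum_e\beta([weX_{\mc C}])=r_{\ell+1}(\tfrac1{r_{\ell+1}}+\Z)=1+\Z=0$; and if $u=w_b^\ell$, these edges are those in $F(w_b^\ell)$, numbering $d_{\ell+1}$, so $\sum_e\beta([weX_{\mc C}])=d_{\ell+1}(\tfrac1{r_{\ell+1}}+\Z)=\tfrac1{r_\ell}+\Z$ by \eqref{eq:d_ell}. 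In each case the sum equals $\beta([wX_{\mc C}])$, so $\beta$ is well defined and additive on compact open sets.

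For (b), every compact open bisection of $\mc R$ is a finite disjoint union of bisections $\gamma_1\cdots\gamma_n$ of the form \eqref{eq:CombinedEquivRelation} with each $\gamma_i$ a compact open bisection of $\mc R_{\mc C,\mathrm{tail}}$ or of $\mc R_\varphi$. Since $s$, $r$, and $\beta$ are additive over disjoint unions, and since the restriction of a compact open bisection of $\mc R_{\mc C,\mathrm{tail}}$ or $\mc R_\varphi$ to a clopen subset of its source is again such a bisection, a telescoping argument along $s(V)=Z_n\to Z_{n-1}\to\cdots\to Z_0=r(V)$ (with $Z_{i-1}$ the image of $Z_i$ under $\gamma_i$) reduces the task to showing $\beta([s(\gamma)])=\beta([r(\gamma)])$ for a single compact open bisection $\gamma$ of $\mc R_{\mc C,\mathrm{tail}}$ or of $\mc R_\varphi$. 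For $\mc R_{\mc C,\mathrm{tail}}$ this is immediate on the basic bisections $U_{w,z}$ of \eqref{eq:TailEqBasis}, since $w$ and $z$ end at the same vertex, and follows in general by additivity. For $\mc R_\varphi$, the basic bisections (from \eqref{eq:Rphidesc}) are $\id_U$ (trivial) and, for clopen $U\subseteq AX_{\mc C}$, the graphs of $\varphi|_U$ and of $(\varphi|_U)^{-1}$; for both, what is required is exactly the identity $\beta([U])=\beta([\varphi(U)])$.

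Establishing $\beta([U])=\beta([\varphi(U)])$ for clopen $U\subseteq AX_{\mc C}$ is the step I expect to be the main obstacle, since it requires unwinding the recursive recipe structure. I would decompose $U$ exactly as in the proof of Lemma~\ref{lem:TEBRshadow}, using that the recipe is $(\ell-1)_{\ell=1}^\infty$-constrained: $U$ is a finite disjoint union of pieces of the form $w_iX_{\mc C}$ (coming from a pair $(w_i,z_i)$ in some descendant recipe) and $A_iX_{\mc C}$ (coming from a constituent $(A_i,B_i,\rho_i)$ of some descendant recipe). On a piece $w_iX_{\mc C}$, $\varphi$ acts by $w_ix\mapsto z_ix$ with $w_i,z_i$ ending at the same vertex, so $\beta$ is preserved. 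On a piece $A_iX_{\mc C}$, one has $\varphi(A_iX_{\mc C})=B_iX_{\mc C}$, and --- by the combinatorics of the construction, as recorded in the proof of the preceding proposition --- $A_i$ and $B_i$ each consist of $r_{\ell'+1}$ paths, all paths of one ending at $w_a^{\ell'+1}$ and all paths of the other ending at $w_b^{\ell'+1}$, where $\ell'$ is the level of the descendant. Since $r_{\ell'+1}\cdot 0=0$ and $r_{\ell'+1}(\tfrac1{r_{\ell'+1}}+\Z)=1+\Z=0$, either assignment gives $\beta([A_iX_{\mc C}])=0=\beta([B_iX_{\mc C}])$, so $\beta$ is preserved on these pieces as well. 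Summing over the pieces yields $\beta([U])=\beta([\varphi(U)])$, which completes (b) and hence the proof.
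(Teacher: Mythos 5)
Your proof is correct and follows essentially the same route as the paper's: define $\hat\beta$ on cylinders, verify consistency under one-step refinement by the same three case computations at $v_{i,a}^\ell$, $w_a^\ell$, $w_b^\ell$, and then check invariance on compact open bisections by reducing (via additivity) to bisections in $\mc R_{\mc C,\mathrm{tail}}$ and $\mc R_\varphi$, handling the latter by decomposing a clopen $K\subseteq A X_{\mc C}$ into tail-equivalence pieces and descendant pieces $A'X_{\mc C}\to B'X_{\mc C}$ on which both sides evaluate to $0$. The only (welcome) difference is that you spell out the telescoping reduction from a product $\gamma_1\cdots\gamma_n$ to its individual factors, which the paper leaves implicit.
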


\begin{proof}
We trivially have a map $\hat\beta$ from the set of all cylinder sets in $X_{\mc C}$ to $E/\Z$ given by the above rules.

\noindent    \textbf{Claim}. The map $\hat\beta$ is consistent with cylinder decomposition, i.e.,
    \begin{equation} \label{eq:hatBetaCylinderDecomp} \hat\beta(wX_{\mc C}) = \sum_{z \in wE_{\ell,\ell+1}} \hat\beta(zX_{\mc C}),\quad w \in E_{0\dots\ell}. \end{equation}

    To see this, if $w\in E_{0,\dots,\ell}$ ends at $v_{a,i}^\ell$, then there are no edges from $v_{a,i}^\ell$ to $w_b^{\ell+1}$, and so $wE_{\ell,\ell+1}$ consists only of paths ending at $v_{a,j}^{\ell+1}$s or $w_a^{\ell+1}$.
    Therefore we see that both sides of \eqref{eq:hatBetaCylinderDecomp} are trivially $0$ in this case.

    If $w$ ends at $w_a^\ell$, then there are $|F(w_a^\ell)|=r_{\ell+1}$ paths in $wE_{\ell,\ell+1}$ which end at $w_b^{\ell+1}$ -- call them $z_1,\dots,z_{r_{\ell+1}}$, and the remaining paths all end at $v_{a,j}^{\ell+1}$s or $w_a^{\ell+1}$. 
    Hence we have
    \begin{equation} \begin{split}
        \sum_{z \in wE_{\ell,\ell+1}} \hat\beta(zX_{\mc C})
        &= \sum_{i=1}^{r_{\ell+1}} \hat\beta(z_iX_{\mc C}) \\
        &= r_{\ell+1} \left(\frac1{r_{\ell+1}} + \Z\right) \\
        &= 0 = \hat\beta(wX_{\mc C}),
    \end{split} \end{equation}
    showing \eqref{eq:hatBetaCylinderDecomp} in this case.

    If $w$ ends at $w_b^\ell$, then there are $|F(w_b^\ell)|=d_{\ell+1}$ paths in $wE_{\ell,\ell+1}$ which end at $w_b^{\ell+1}$ -- call them $z_1,\dots,z_{d_{\ell+1}}$, and the remaining paths all end at $v_{a,j}^{\ell+1}$s or $w_a^{\ell+1}$. 
    Hence we have
    \begin{equation} \begin{array}{rcl}
        \sum_{z \in wE_{\ell,\ell+1}} \hat\beta(zX_{\mc C})
        &=& \displaystyle\sum_{i=1}^{d_{\ell+1}} \hat\beta(z_iX_{\mc C}) \\
        &=& d_{\ell+1} \left(\displaystyle\frac1{r_{\ell+1}} + \Z\right) \\
        &\stackrel{\eqref{eq:d_ell}}=& \displaystyle\frac1{r_\ell}+\Z \\
        &=& \hat\beta(wX_{\mc C}),
    \end{array} \end{equation}
    which again verifies \eqref{eq:hatBetaCylinderDecomp}, and finishes the proof of the claim.
\medskip

It follows that $\hat\beta$ extends to a well-defined map, which we will continue to call $\hat\beta$, from the set of all clopen subsets of $X_{\mc C}$ to $E/\Z$, which takes disjoint unions to sums.

To get a well-defined homomorphism from $S(\mc R)$, it suffices to show that $\hat\beta(r(\gamma))=\hat\beta(s(\gamma))$ for any compact open bisection $\gamma\subseteq \mc R$.
Since we can decompose any such compact open bisection into ones coming from the basis described in \eqref{eq:CombinedEquivRelation}, it suffices to prove this assuming that either $\gamma \subseteq \mc R_{\mc C,\mathrm{tail}}$ or $\gamma = \{(x,\varphi(x)): x\in K\}$ for some clopen set $K\subseteq A$.

Since the definition of $\hat\beta(wX_{\mc C})$ depends only on the vertex that $w$ ends at, we see that $\hat\beta(r(\gamma))=\hat\beta(s(\gamma))$ when $\gamma \subseteq \mc R_{\mc C,\mathrm{tail}}$.

For the other case, there is some $\ell$ such that $K$ is a disjoint union of cylinders defined by paths of length $\ell$.
Since $\rho$ is $(\ell-1)_{\ell=1}^\infty$-constrained, $K$ can then be written as a disjoint union of sets where $\varphi$ acts by tail-equivalence, together with sets of the form $A'X_{\mc C}$ where $(A',B',\rho')$ is a descendant of $\rho$.
Since we have already handled tail-equivalence, we may assume that $K$ has the form $A'X_{\mc C}$ where $(A',B',\rho')$ is a descendant of $\rho$.
In this case, $s(\gamma)=A'X_{\mc C}$ and $r(\gamma)=B'X_{\mc C}$.
Either $A'$ (resp.\ $B'$) consists of $r_{\ell'}$ paths ending at $w_a^{\ell'}$ (resp.\ $w_b^{\ell'}$) or vice versa; we assume the former.
Then we have
\begin{equation} \begin{split}
    \hat\beta(r(\gamma)) &= \hat\beta(B'X_{\mc C}) \\
    &= \sum_{w \in B'} \hat\beta(wX_{\mc C}) \\
    &= \sum_{w\in B'} \frac1{r_{\ell'}}+\Z \\
    &= r_{\ell'} \left(\frac1{r_{\ell'}}+\Z\right) \\
    &= 0 \\
    &= \sum_{w \in A'} \hat\beta(wX_{\mc C}) \\
    &= \hat\beta(s(\gamma)).
\end{split}\end{equation}
This concludes the proof that $\hat\beta$ factors through $S(\mc R)$, and thereby induces the map $\beta$ as described.
\end{proof}

\begin{theorem}
The homomorphisms $\alpha$ and $\beta$ combine to give an isomorphism
\begin{equation} (\alpha,\beta):S(\mc R) \to \{(0,0)\} \cup ((D_+\setminus \{0\}) \oplus (E/\Z)). \end{equation}
Consequently, $H_0(\mc R) \cong D \oplus (E/\Z)$, with $H_0(\mc R)_+$ identifying with $\{(0,0)\} \cup ((D_+\setminus\{0\}) \oplus (E/\Z))$.
\end{theorem}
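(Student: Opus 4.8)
First I would prove that $(\alpha,\beta)$ is an isomorphism of monoids from $S(\mc R)$ onto $T:=\{(0,0)\}\cup((D_+\setminus\{0\})\oplus(E/\Z))$; the statement about $H_0(\mc R)$ then follows by passing to enveloping groups (Proposition~\ref{prop:HomologyEnvelopingTypeSemigroup}), using that the Grothendieck group of $T$ is $D\oplus(E/\Z)$ with positive cone exactly $T$ --- this last point is routine, since $D$ is directed and simple, so every element of $D$ is a difference of two \emph{nonzero} elements of $D_+$, and $T$ is a proper cone closed under addition. Throughout I use two standing facts: every element of $S(\mc R)$ is $[C]$ for some clopen $C\subseteq X_{\mc C}$ (a general element is a finite sum of such, and since $\mc R\supseteq\mc R_{\mc C,\mathrm{tail}}$ has dense orbits, finitely many clopen sets may be replaced by pairwise disjoint $\mc R$-equivalent ones by pushing small cylinders off an obstruction set, collapsing the sum to a single $[C]$); and cylinders $wX_{\mc C},w'X_{\mc C}$ ending at the same vertex of $\mc C$ satisfy $[wX_{\mc C}]=[w'X_{\mc C}]$ (tail-equivalence), likewise in $X_{\mc B_D}$.

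\textbf{Image and surjectivity.} Since $D$ is simple and non-cyclic it is nonzero, so the image of any standard basis vector $e_v$ is nonzero in $D$ (it generates a nonzero order ideal, hence all of $D$). Thus $\alpha([wX_{\mc C}])\neq 0$ for every cylinder, and as $\alpha$ is order preserving with $D_+$ a proper cone, $\alpha([C])=0$ forces $C=\emptyset$; hence $(\alpha,\beta)$ sends $[\emptyset]\mapsto(0,0)$ and every other element into $(D_+\setminus\{0\})\oplus(E/\Z)$, so the image lies in $T$. For surjectivity onto $T$, given $d\in D_+\setminus\{0\}$ choose (using surjectivity of $\alpha$) a nonempty clopen $C_0$ with $\alpha([C_0])=d$, and set $\bar e_0:=\beta([C_0])$. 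The prescribed value differs from $\bar e_0$ by some $\tfrac p{r_\ell}+\Z$ with $0\le p<r_\ell$, and after increasing $\ell$ we may assume $C_0$ contains $p$ pairwise disjoint cylinders ending at $w_a^\ell$ and $X_{\mc C}\setminus C_0$ contains $p$ further pairwise disjoint cylinders ending at $w_b^\ell$ (possible since the relevant path counts grow, by \eqref{eq:EDsize} and \eqref{eq:Fchoice2}). Replacing a $w_a^\ell$-cylinder by a $w_b^\ell$-cylinder one at a time leaves $\alpha$ unchanged (as $T(w_a^\ell)=T(w_b^\ell)=w_a^\ell$, the removed and inserted pieces have the same class in $S(\mc R_{\mc B_D,\mathrm{tail}})$) and increases $\beta$ by $\tfrac1{r_\ell}+\Z$; after $p$ steps $(\alpha,\beta)$ of the result is the prescribed pair. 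Together with $[\emptyset]\mapsto(0,0)$, $(\alpha,\beta)$ is onto $T$.

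\textbf{Injectivity (the main step).} Suppose $[C],[C']$ satisfy $\alpha([C])=\alpha([C'])$ and $\beta([C])=\beta([C'])$. The first equation says $[s_X(C)]=[s_X(C')]$ in $S(\mc R_{\mc B_D,\mathrm{tail}})\cong D_+$; tracing through the inductive limit \eqref{eq:BrattDiagSystem} for $\mc B_D$, there is a level $m$ --- which we may take arbitrarily large, in particular $m\ge 2$ --- such that the level-$m$ cylinder decompositions of $s_X(C)$ and $s_X(C')$ have the same number of cylinders ending at each vertex of $(V_D)_m$. Pulling back through the homeomorphism $s_X$ (which respects the grading and sends level-$m$ cylinders bijectively to level-$m$ cylinders via the path bijection induced by $T$), and using $T^{-1}(v_{i,a}^m)=\{v_{i,a}^m\}$ and $T^{-1}(w_a^m)=\{w_a^m,w_b^m\}$, we get that $C$ and $C'$ have equally many level-$m$ cylinders at each $v_{i,a}^m$ (say $c_i$ each) and equal \emph{combined} count at $\{w_a^m,w_b^m\}$: writing $a,b$ (resp.\ $a',b'$) for the counts of $C$ (resp.\ $C'$) at $w_a^m,w_b^m$, we have $a+b=a'+b'$. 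Setting $\mathbf a:=[w_a^m X_{\mc C}]$, $\mathbf b:=[w_b^m X_{\mc C}]$, $\mathbf v_i:=[v_{i,a}^m X_{\mc C}]$, tail-equivalence gives $[C]=\sum_i c_i\mathbf v_i+a\mathbf a+b\mathbf b$ and $[C']=\sum_i c_i\mathbf v_i+a'\mathbf a+b'\mathbf b$, so it suffices to prove $a\mathbf a+b\mathbf b=a'\mathbf a+b'\mathbf b$. The hypothesis $\beta([C])=\beta([C'])$ reads $b\equiv b'\pmod{r_m}$ (only $w_b^m$-cylinders contribute to $\beta$); assume $b\ge b'$ and write $b=b'+kr_m$, so $a=a'-kr_m$ and $a'\ge kr_m$. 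The key identity is
\[ r_m\,\mathbf a \;=\; r_m\,\mathbf b \qquad\text{in }S(\mc R). \]
Indeed, since $m\ge 2$ and $r_{m-1}\ge 2$, the recipe has a descendant $(A_i,B_i,\rho_i)$ at level $m-1$ in which $A_i$ is $r_m$ paths ending at $w_b^m$ and $B_i$ is $r_m$ paths ending at $w_a^m$; the bisection $\{(x,\varphi(x)):x\in A_iX_{\mc C}\}$ has source $A_iX_{\mc C}$ and range $B_iX_{\mc C}$, whence $[A_iX_{\mc C}]=[B_iX_{\mc C}]$, i.e.\ $r_m\mathbf b=r_m\mathbf a$ after decomposing into cylinders and applying tail-equivalence. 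Therefore $b\mathbf b=b'\mathbf b+k(r_m\mathbf b)=b'\mathbf b+k(r_m\mathbf a)$, so $a\mathbf a+b\mathbf b=(a+kr_m)\mathbf a+b'\mathbf b=a'\mathbf a+b'\mathbf b$; adding $\sum_i c_i\mathbf v_i$ gives $[C]=[C']$.

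\textbf{Main obstacle.} The delicate part is the injectivity argument: matching $C$ and $C'$ along a level $m$ deep enough to straddle the collapse $\{w_a^m,w_b^m\}\mapsto w_a^m$ under $T$ \emph{and} to host a descendant recipe, then correctly reducing the comparison inside the (non-cancellative) monoid $S(\mc R)$ to the single relation $r_m\mathbf a=r_m\mathbf b$ supplied by $\varphi$. A secondary technical point that needs care is the preliminary fact that every element of $S(\mc R)$ is represented by a clopen set, which I would deduce from minimality of $\mc R$.
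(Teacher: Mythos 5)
Your core combinatorial mechanism matches the paper's exactly: reduce to level-$m$ cylinder counts via the shadow map $T$, use that $T^{-1}(v_{i,a}^m)$ is a singleton while $T^{-1}(w_a^m)=\{w_a^m,w_b^m\}$ to read off from $\alpha$ that the $v_{i,a}^m$-counts match and the combined $\{w_a^m,w_b^m\}$-counts match, read off from $\beta$ a congruence modulo $r_m$ on the $w_b^m$-counts, and close the argument with the relation $r_m[w_a^mX_{\mc C}]=r_m[w_b^mX_{\mc C}]$ supplied by a descendant recipe together with $\varphi$. Likewise your surjectivity trade of $w_a^\ell$-cylinders for $w_b^\ell$-cylinders is the paper's construction of the $q_i$'s.

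The gap is the ``standing fact'' that every element of $S(\mc R)$ equals $[C]$ for a single clopen $C$; this is false, and both your injectivity and surjectivity arguments as written depend on it. For any clopen $C\subseteq X_{\mc C}$ we have $[C]+[X_{\mc C}\setminus C]=[X_{\mc C}]$, hence $\alpha([C])\le\alpha([X_{\mc C}])$ in $D_+$. But $2[X_{\mc C}]\in S(\mc R)$ with $\alpha(2[X_{\mc C}])=2\alpha([X_{\mc C}])$, which cannot equal $\alpha([C])\le\alpha([X_{\mc C}])$ since $D$ is a nonzero torsion-free ordered group; so $2[X_{\mc C}]$ is not $[C]$ for any clopen $C$. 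The ``pushing small cylinders off an obstruction set'' trick has no room once the class being represented exceeds $[X_{\mc C}]$. Concretely, your surjectivity fails at its first step (there is no clopen $C_0$ with $\alpha([C_0])=d$ when $d\not\le\alpha([X_{\mc C}])$), and your injectivity proof only covers the sub-semigroup of classes of single clopen sets. The fix---which is what the paper does---is to drop the reduction and run the identical count-and-relation argument directly on formal sums $\sum_i[p_iX_{\mc C}]$; since your bookkeeping already operates on multisets of deep-level cylinders, nothing of substance changes, but as written the proof does not cover all of $S(\mc R)$.
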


\begin{proof}
The final statement follows immediately by Proposition~\ref{prop:HomologyEnvelopingTypeSemigroup}.

\textbf{Injectivity}.
Suppose that $x,y \in S(\mc R)$ are such that $(\alpha,\beta)(x)=(\alpha,\beta)(y)$.
Write $x=\sum_{i=1}^k [p_iX_{\mc C}]$ and $y= \sum_{i=1}^{k'} [q_iX_{\mc C}]$ for some paths $p_1,\dots,p_k,q_1,\dots,q_{k'}$, and we may in fact assume that all these paths are of the same length: $p_1,\dots,p_k,q_1,\dots,q_{k'} \in E_{0\dots\ell'}$ for some $\ell'$.

Since $\alpha$ is induced by the shadow Bratteli diagram homomorphism $T:\mc C \to \mc B_D$, we have that
\begin{equation} \sum_{i=1}^k [T(p_i)X_{\mc B_D}] = \sum_{i=1}^{k'} [T(q_i)X_{\mc B_D}] \end{equation}
in $S(\mc R_{\mc B_D,\mathrm{tail}}) = D_+$.
Therefore, by possibly going to a further level (i.e., subdividing each cylinder set) and by possibly permuting the elements, we have $k=k'$ and we may assume that $T(p_i)$ ends at the same vertex as $T(q_i)$ for each $i$.

If $T(p_i)$ ends at $v_{a,j}^{\ell'}$ for some $j$, then since this vertex has a single preimage under $T$, $p_i$ and $q_i$ would end at the same vertex, and so $[p_iX_{\mc C}] = [q_iX_{\mc C}]$.
We may therefore cancel these terms out, so that what remains are only $p_i$ ending at either $w_a^{\ell'}$ or $w_b^{\ell'}$.
We may likewise pair up any $p_i$ ending at $w_a^{\ell'}$ or $w_b^{\ell'}$ with any $q_i$ ending at the same vertex and cancel their terms.
Upon doing so, we have that all the $p_i$ end at the same vertex -- either $w_a^{\ell'}$ or $w_b^{\ell'}$ -- and all the $q_i$ end at a different vertex.
So we may assume without loss of generality that all $p_i$ end at $w_b^{\ell'}$ and all $q_i$ end at $w_a^{\ell'}$.

We now compute
\begin{equation} \begin{split}
    0 &= \beta(y) \\
    &= \beta(x) \\
    &= \sum_{i=1}^k \beta([p_iX_{\mc C}]) \\
    &= \sum_{i=1}^k \frac1{r_{\ell'}}+\Z \\
    &= \frac{k}{r_\ell'}+\Z.
\end{split} \end{equation}
Therefore, we see that 
$k=r_{\ell'}\hat k$ for some natural number $\hat k$.

We may find a descendant $(A',B',\rho')$ at stage $\ell'$. 
Then $A'$ consists of $r_{\ell'}$ paths ending at $w_a^{\ell'}$ and $B'$ consists of $r_{\ell'}$ paths ending at $w_b^{\ell'}$, or vice versa -- but for simplicity assume not.
Since this is a descendant of $\rho$, and $\varphi$ fits $\rho$, we have $\varphi(A'X_{\mc C}) = B'X_{\mc C}$.
Then
\begin{equation}
\begin{split}
\sum_{i=1}^k [p_i X_{\mc C}]
&= \hat k[A'X_{\mc C}]   \\
&= \hat k[B'X_{\mc C}] \\
&= \sum_{i=1}^k [q_i X_{\mc C}],
\end{split}
\end{equation}
by using tail-equivalence for the first and last equalities, and using $\varphi(A'X_{\mc C})=B'X_{\mc C}$ in the middle.
This shows that $x=y$, as required.

\textbf{Surjectivity}.
Let $x \in D_+\setminus \{0\}$ and consider an arbitrary element $y+\Z$ of $E/\Z$, where $y \in E$ and $0 \leq y < 1$.
Since $x$ is in $D_+$, we can write it as 
\begin{equation} x=\sum_{i=1}^k [p_iX_{\mc B_D}], \end{equation}
for some paths $p_1,\dots,p_k$ in $\mc B_D$.
By possibly subdividing, we may assume that they have length $\ell'$ such that $r_{\ell'} y \in \mathbb N$.
By possibly further subdividing (using \eqref{eq:EDsize},\eqref{eq:Fchoice1},\eqref{eq:Fchoice2}), we may assume that at least $r_{\ell'} y$ of them end at $w_a^{\ell'}$.
(This is possible because when  we divide $k$ cylinders from paths of length $\ell_1$ to paths of length $\ell_1+1$, then we end up with at least $kr_{\ell_1+1}$ paths ending at $w_a^{\ell_1}$.)

By reordering, we may assume that each of $p_1,\dots,p_t$ ends at $w_a^{\ell'}$ and that each of $p_{t+1},\dots,p_k$ ends at some $v_{j,a}^{\ell'}$ (for some $t\geq r_{\ell'}y$).
Then pick paths $q_1,\dots,q_k$ of length $\ell'$ in $\mc C$, such that:
\begin{itemize}
    \item $q_i$ ends at $w_{b}^{\ell'}$, for $i=1,\dots, r_{\ell'}y$;
    \item $q_i$ ends at $w_{a}^{\ell'}$, for $i=r_{\ell'}y+1,\dots,t$;
    \item $q_i$ ends at $v_{j,a}^{\ell'}$ when $p_i$ ends at $v_{j,a}^{\ell'}$, for $i=t+1,\dots,k$.
\end{itemize}
Set 
\begin{equation} z\coloneqq [q_1X_{\mc C}]+\cdots+[q_kX_{\mc C}] \in S(\mc R). \end{equation}

We have that $T(q_i)$ ends at the same vertex as $p_i$, so that $\alpha([q_iX_{\mc C}])=[p_iX_{\mc B_D}]$ for all $i$.
Hence
\begin{equation} \alpha(z) = \sum_{i=1}^k [p_iX_{\mc B_D}] = x. \end{equation}
We also have 
\begin{equation} \beta([q_iX_{\mc C}])=\begin{cases} \frac1{r_{\ell'}}+\Z, \quad &\text{for $i=1,\dots,r_{\ell'}y$}; \\ 0,\quad &\text{otherwise}. \end{cases} \end{equation}
Therefore,
\begin{equation} \beta(z) = \sum_{i=1}^k \beta([q_iX_{\mc C}]) = \frac{r_{\ell'}y}{r_{\ell'}}+\Z = y+\Z, \end{equation}
and so $(\alpha,\beta)(z)=(x,y+\Z)$ as required.
\end{proof}

\section{Approximately inner flip}
\label{sec:AIF}

Here we define a notion of approximately inner flip for \'etale equivalence relations, inspired by a notion of the same name introduced by Effros and Rosenberg for C*-algebras (\cite{EffrosRosenberg}).

Recall that the \emph{topological full group} of an \'etale equivalence relation $\mc R$ with compact totally disconnected unit space, denoted $[[\mc R]]$, consists of homeomorphisms $\alpha$ of $\mc R^{(0)}$ such that the set $\{(x,\alpha(x)):x\in \mc R^{(0)}\}$ is a compact open bisection in $\mc R$.

\begin{definition}
Let $\mathcal R$ be an \'etale equivalence relation with compact metrizable totally disconnected unit space.
We say that $\mathcal R$ has \emph{approximately inner flip} if there is a sequence of maps $(\alpha_n)_n$ in $[[\mathcal R \times \mathcal R]]$ such that $\alpha_n \to \sigma$ uniformly on $\mc R^{(0)} \times \mc R^{(0)}$, where $\sigma:\mc R^{(0)}\times \mc R^{(0)} \to \mc R^{(0)} \times \mc R^{(0)}$ is given by $\sigma(x,y)\coloneqq(y,x)$.
\end{definition}

\begin{example}
Tail equivalence on the space of infinite words in a finite alphabet has approximately inner flip.
\end{example}

Since $\mc R^{(0)}$ is compact, it follows in the above definition that $\alpha_n^{-1}$ also converges uniformly to $\sigma$.

The following shows how the above property relates to the C*-algebraic notion.
We refer to \cite[Section 5.6]{BrownOzawa} for the definition of the reduced C*-algebra $C^*_r(\mathcal G)$ associated to an \'etale groupoid $\mathcal G$.

\begin{proposition}
Let $\mathcal R$ be an \'etale equivalence relation with compact metrizable totally disconnected unit space.
It has approximately inner flip iff there is a sequence of unitary normalizers $(u_n)_n$ in $C^*_r(\mathcal R)\otimes C^*_r(\mathcal R)$ (that is, $u_n^*(C(\mc R^{(0)}) \otimes C(\mc R^{(0)}))u_n = C(\mc R^{(0)})\otimes C(\mc R^{(0)})$ for all $n$) such that
\begin{equation} \|u_n(f \otimes g)u_n^* - g \otimes f\| \to 0, \quad f,g \in C(\mc R^{(0)}). \end{equation}
\end{proposition}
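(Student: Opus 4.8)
The plan is to pass through the standard correspondence between the topological full group of a principal ample groupoid and the unitary normalisers of its canonical Cartan subalgebra, and then to translate uniform convergence of homeomorphisms into norm convergence of the implementing unitaries, and conversely.

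First I would set up the framework. There is a canonical isomorphism $C^*_r(\mc R\times\mc R)\cong C^*_r(\mc R)\otimes C^*_r(\mc R)$ (the reduced C*-algebra of a product groupoid) carrying $C(\mc R^{(0)})\otimes C(\mc R^{(0)})$ onto $C(\mc R^{(0)}\times\mc R^{(0)})$, the canonical Cartan subalgebra of $C^*_r(\mc R\times\mc R)$; under it the flip $\sigma$ induces exactly the automorphism $f\otimes g\mapsto g\otimes f$. For $\alpha\in[[\mc R\times\mc R]]$, its graph $U_\alpha\coloneqq\{(z,\alpha(z)):z\in\mc R^{(0)}\times\mc R^{(0)}\}$ is a compact open bisection with $s(U_\alpha)=r(U_\alpha)=\mc R^{(0)}\times\mc R^{(0)}$, so its characteristic function $u_\alpha\coloneqq\chi_{U_\alpha}$ is a unitary in $C^*_r(\mc R\times\mc R)$ that normalises the Cartan and satisfies $u_\alpha h u_\alpha^*=h\circ\alpha^{-1}$ for all $h\in C(\mc R^{(0)}\times\mc R^{(0)})$. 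Conversely, any unitary normaliser $u$ of the Cartan has open support an open bisection $W$ of $\mc R\times\mc R$; unitarity forces $W$ to have full source and range, and then, $s|_W$ being a homeomorphism of $W$ onto the compact space $\mc R^{(0)}\times\mc R^{(0)}$, $W$ is compact, hence clopen, hence the graph of an element $\theta_u\in[[\mc R\times\mc R]]$ with $uhu^*=h\circ\theta_u^{-1}$; moreover $u\mapsto\theta_u$ depends only on $\operatorname{Ad}(u)$ restricted to the Cartan. I would record this (standard) dictionary with a citation to Cartan-subalgebra theory for ample principal groupoids.

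With the dictionary in hand both implications become soft. For the forward direction, given $\alpha_n\to\sigma$ uniformly on the compact unit space, set $u_n\coloneqq u_{\alpha_n}$; then $u_n(f\otimes g)u_n^*=(f\otimes g)\circ\alpha_n^{-1}$, and since $\alpha_n^{-1}\to\sigma^{-1}=\sigma$ uniformly and $f\otimes g$ is uniformly continuous we get $\|(f\otimes g)\circ\alpha_n^{-1}-(f\otimes g)\circ\sigma\|_\infty\to0$; as $g\otimes f=(f\otimes g)\circ\sigma$ and the C*-norm restricts to $\|\cdot\|_\infty$ on the Cartan, this is precisely $\|u_n(f\otimes g)u_n^*-g\otimes f\|\to0$. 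For the converse, given unitary normalisers $u_n$ with $\|u_n(f\otimes g)u_n^*-g\otimes f\|\to0$ for all $f,g$, set $\alpha_n\coloneqq\theta_{u_n}\in[[\mc R\times\mc R]]$; then $(f\otimes g)\circ\alpha_n^{-1}\to g\otimes f=(f\otimes g)\circ\sigma$ uniformly, and by linearity plus density of the span of elementary tensors in $C(\mc R^{(0)}\times\mc R^{(0)})$ we get $\|h\circ\alpha_n^{-1}-h\circ\sigma\|_\infty\to0$ for every $h$. Fixing a countable family $(h_k)$ in $C(\mc R^{(0)}\times\mc R^{(0)})$ that separates points (possible as $\mc R^{(0)}$ is compact metrizable), a routine compactness argument then upgrades this to $\alpha_n^{-1}\to\sigma$ uniformly, hence $\alpha_n\to\sigma^{-1}=\sigma$ uniformly, as required.

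The main obstacle is the normaliser-to-bisection half of the dictionary, i.e.\ that \emph{every} unitary normaliser of $C(\mc R^{(0)}\times\mc R^{(0)})$ in $C^*_r(\mc R\times\mc R)$---not only the locally constant ones of the form $\chi_W$---induces a homeomorphism whose graph lies in $\mc R\times\mc R$ and is clopen, so that it belongs to $[[\mc R\times\mc R]]$. This is exactly the groupoid-reconstruction content of Cartan-subalgebra theory, and it uses essentially that $\mc R\times\mc R$ is Hausdorff, principal, and ample (so that $C^*_r(\mc R\times\mc R)$ embeds faithfully into $C_0(\mc R\times\mc R)$, open supports of normalisers are open bisections, and compact open bisections form a basis); unitarity is what forces the relevant bisection to be compact. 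A secondary point to check is that the identification $C^*_r(\mc R\times\mc R)\cong C^*_r(\mc R)\otimes C^*_r(\mc R)$ respects the Cartan subalgebras and intertwines $\sigma$ with the flip; this is standard and would take only a sentence. Everything else in the argument is routine.
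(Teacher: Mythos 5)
Your proposal is correct and follows essentially the same route as the paper: pass to $C^*_r(\mc R\times\mc R)\cong C^*_r(\mc R)\otimes C^*_r(\mc R)$, use the dictionary between unitary normalisers of the Cartan $C(\mc R^{(0)}\times\mc R^{(0)})$ and elements of $[[\mc R\times\mc R]]$, and translate uniform convergence of homeomorphisms into norm convergence of $\operatorname{Ad}(u_n)$ on the Cartan and back. The normaliser-to-full-group step you flag as the main obstacle is exactly what the paper outsources to Renault's Proposition~4.7 (based on Kumjian's Proposition~1.6), so your reliance on ``Cartan-subalgebra theory for ample principal groupoids'' is precisely the intended citation.
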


\begin{proof}
Note that $C^*_r(\mathcal R) \otimes C^*_r(\mathcal R) \cong C^*_r(\mathcal R \times \mathcal R)$, with the isomorphism taking $C(\mc R^{(0)})\otimes C(\mc R^{(0)})$ to $C(\mc R^{(0)} \times \mc R^{(0)})$ (see \cite[Lemma 2.10 and its proof]{AustinMitra}).

Suppose that $\mathcal R$ has approximately inner flip and let $(\alpha_n)_n$ be the maps given by the definition.
Each $\alpha_n$ gives rise to a unitary normalizer $u_n$ in $C^*_r(\mathcal R \times \mathcal R)$ (which we identify with $C^*_r(\mathcal R) \otimes C^*_r(\mathcal R)$), such that
\begin{equation} u_nFu_n^* = F \circ \alpha_n. \end{equation}
Since $\alpha_n$ converges uniformly to $\sigma$, it follows that $u_n(f\otimes g)u_n^*$ converges in norm to $(f \otimes g) \circ \sigma = g \otimes f$.

Conversely, suppose we are given unitary normalizers $(u_n)_n$ such that $u_n(f\otimes g)u_n^*$ converges in norm to $g\otimes f$, for all $f,g \in C(\mc R^{(0)})$.
Then by \cite[Proposition 4.7]{Renault08} (based on \cite[Proposition 1.6]{Kumjian86}), there exists $\alpha_n \in [[\mathcal R \times \mathcal R]]$ such that $u_nFu_n^* = F \circ \alpha_n$.
Since $(f \otimes g) \circ \alpha_n$ converges to $g \otimes f = (f\otimes g) \circ \sigma$, for all $f,g \in C(\mc R^{(0)})$, it follows that $F \circ \alpha_n \to F \circ \sigma$ for all $F \in C(\mc R^{(0)} \times \mc R^{(0)})$, and consequently, $\alpha_n \to \sigma$ uniformly.
\end{proof}

Here is another characterization of approximately inner flip.

\begin{proposition}
    Let $\mathcal R$ be an \'etale equivalence relation with compact metrizable totally disconnected unit space.
    Then $\mc R$ has approximately inner flip if and only if, for all clopen sets $A,B\subseteq \mc R^{(0)}$, we have $[A\times B]=[B\times A]$ in $S(\mc R\times\mc R)$.
\end{proposition}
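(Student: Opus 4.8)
The plan is to prove both directions by hand, leaning on the bisection description of the type semigroup: for an ample groupoid $\mc S$ and compact open sets $E,F\subseteq\mc S^{(0)}$, $[E]=[F]$ in $S(\mc S)$ if and only if there is a compact open bisection $U\subseteq\mc S$ with $s(U)=E$ and $r(U)=F$. The ``if'' part is just relation~(1) of Definition~\ref{def:TypeSemigroup}; for the ``only if'' part one unwinds the definition of $S(\mc S)$ from \cite{BonickeLi,RainoneSims}, the key point being that the underlying equidecomposability relation is genuinely transitive (two finite clopen partitions of $F$ admit a common refinement, which lets one compose the bisections witnessing $E\sim F$ and $F\sim G$). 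Fix once and for all a compatible metric $d$ on $\mc R^{(0)}$, and equip $\mc R^{(0)}\times\mc R^{(0)}$ with the corresponding max metric.

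Suppose first that $\mc R$ has approximately inner flip, witnessed by $\alpha_n\in[[\mc R\times\mc R]]$ with $\alpha_n\to\sigma$ uniformly; recall this also forces $\alpha_n^{-1}\to\sigma$ uniformly. Fix clopen $A,B\subseteq\mc R^{(0)}$, and choose $\epsilon>0$ so small that the $\epsilon$-neighbourhoods of the clopen sets $A\times B$ and $B\times A$ are $A\times B$ and $B\times A$ themselves. For $n$ large we have $\sup_p d(\alpha_n(p),\sigma(p))<\epsilon$ and $\sup_p d(\alpha_n^{-1}(p),\sigma(p))<\epsilon$; then for $p\in A\times B$, the point $\sigma(p)\in B\times A$ is within $\epsilon$ of $\alpha_n(p)$, forcing $\alpha_n(p)\in B\times A$, so $\alpha_n(A\times B)\subseteq B\times A$, and the same argument applied to $\alpha_n^{-1}$ gives $B\times A\subseteq\alpha_n(A\times B)$. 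Restricting the (compact open) graph of $\alpha_n$ to $A\times B$ then produces a compact open bisection of $\mc R\times\mc R$ from $A\times B$ to $B\times A$, so $[A\times B]=[B\times A]$ in $S(\mc R\times\mc R)$ by the characterization above.

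Conversely, assume $[A\times B]=[B\times A]$ in $S(\mc R\times\mc R)$ for all clopen $A,B\subseteq\mc R^{(0)}$. Given $\epsilon>0$, choose a finite clopen partition $\mc P=\{A_1,\dots,A_k\}$ of $\mc R^{(0)}$ with every $A_i$ of diameter $<\epsilon$ (available since $\mc R^{(0)}$ is compact, metrizable, totally disconnected); then $\{A_i\times A_j\}_{i,j}$ is a clopen partition of $\mc R^{(0)}\times\mc R^{(0)}$ into sets of diameter $<\epsilon$. For each pair $i\neq j$ the hypothesis supplies a compact open bisection $U_{ij}\subseteq\mc R\times\mc R$ with $s(U_{ij})=A_i\times A_j$ and $r(U_{ij})=A_j\times A_i$; put $U_{ii}\coloneqq\id_{A_i\times A_i}$. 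The sources $\{A_i\times A_j\}_{i,j}$ partition the unit space, and the ranges $\{A_j\times A_i\}_{i,j}$ are the same family reindexed, hence also partition it; therefore $W\coloneqq\bigsqcup_{i,j}U_{ij}$ is again a compact open bisection, now with $s(W)=r(W)=\mc R^{(0)}\times\mc R^{(0)}$, i.e.\ $W$ is the graph of some $\alpha\in[[\mc R\times\mc R]]$ with $\alpha(A_i\times A_j)=A_j\times A_i$ for all $i,j$. For $p\in A_i\times A_j$ both $\alpha(p)$ and $\sigma(p)$ lie in $A_j\times A_i$, a set of diameter $<\epsilon$, so $\sup_p d(\alpha(p),\sigma(p))\le\epsilon$; taking $\epsilon=1/n$ yields the required sequence $\alpha_n\to\sigma$.

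The steps I expect to be routine are the existence of the fine clopen partition and the verification that a finite disjoint union of compact open bisections with pairwise disjoint sources and pairwise disjoint ranges is again a compact open bisection (injectivity of $r$ and $s$ is checked block by block). The one place that needs a genuine argument — the main obstacle — is the bisection characterization of equality in the type semigroup invoked throughout: we must know that $[E]=[F]$ yields a \emph{single} bisection from $E$ to $F$, not merely a finite chain of cut-and-rearrange moves, and this is precisely where transitivity of equidecomposability (via common refinements of clopen partitions) is used.
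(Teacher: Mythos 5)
Your proof is correct and follows essentially the same route as the paper's: the forward direction extracts a single compact open bisection from the graph of $\alpha_n$ (for $n$ large), and the converse glues bisections over a fine clopen partition $\{A_i\}$ to build $\alpha\in[[\mc R\times\mc R]]$ uniformly close to the flip. The subtlety you flag --- that $[E]=[F]$ in $S(\mc R\times\mc R)$ yields a \emph{single} compact open bisection from $E$ to $F$, via transitivity of equidecomposability and reassembling the pieces into one bisection --- is used silently in the paper as well, so your caution there is appropriate rather than a divergence.
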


\begin{proof}
    Suppose that $\mathcal R$ has approximately inner flip, and obtain the sequence $(\alpha_n)_{n=1}^\infty$ in $[[\mc R\times \mc R]]$ from the definition.
    Then for all $n$ sufficiently large, we must have $\alpha_n(A\times B) = B \times A$, and therefore $[A\times B]=[B\times A]$ in $S(\mc R\times \mc R)$.

For the converse, it is useful to fix a metric on $\mc R^{(0)}$.
Given $\epsilon>0$ we may decompose $X=A_1\sqcup \cdots \sqcup A_n$ where each $A_i$ is clopen and has diameter at most $\epsilon$.
For each $i,j$, the hypothesis tells us that there exists a compact open bisection $\gamma_{i,j}\subseteq \mc R\times\mc R$ such that $s(\gamma_{i,j})=A_i\times B_j$ and $r(\gamma_{i,j})=B_j\times A_i$.
We assign to $\gamma_{i,j}$ the homeomorphism $\alpha_{i,j}:A_i \times B_j \to B_j \times A_i$, such that
\begin{equation} \gamma_{i,j} = \{((x,y),\alpha_{i,j}(x,y))\}. \end{equation}
Define $\alpha:\mc R^{(0)}\times \mc R^{(0)} \to \mc R^{(0)}\times \mc R^{(0)}$ such that 
\begin{equation} \alpha|_{A_i \times B_j} = \alpha_{i,j} \quad\text{for all $i,j$}. \end{equation}
Then we have $\alpha \in [[\mc R\times \mc R]]$ and $d(\alpha(x,y),(x,y)) < \epsilon$ for all $x,y \in \mc R^{(0)}$.
\end{proof}

The following is straightforward.

\begin{proposition}
    \label{prop:TensorS}
    Let $\mc G_1,\mc G_2$ be \'etale groupoids with totally disconnected unit spaces.
    There is an isomorphism $S(\mc G_1\times \mc G_2)\cong S(\mc G_1) \otimes S(\mc G_2)$ taking $[A\times B]$ to $[A]\otimes[B]$ for compact open sets $A\subseteq \mc G_1^{(0)}$ and $B\subseteq\mc G_2^{(0)}$.
\end{proposition}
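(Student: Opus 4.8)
The plan is to exhibit mutually inverse monoid homomorphisms $\Psi\colon S(\mc G_1)\otimes S(\mc G_2)\to S(\mc G_1\times\mc G_2)$ and $\Phi\colon S(\mc G_1\times\mc G_2)\to S(\mc G_1)\otimes S(\mc G_2)$, using the universal properties at each step. Throughout I use that, for commutative monoids, $M\otimes N$ is equipped with a universal biadditive map $M\times N\to M\otimes N$, is generated as a monoid by the simple tensors $m\otimes n$, and that a monoid homomorphism $N\to\mathrm{Hom}(M,T)$ (with $\mathrm{Hom}$ a monoid under pointwise addition) is the same as a biadditive map $M\times N\to T$.

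Constructing $\Psi$ is a direct application of the universal property in Definition~\ref{def:TypeSemigroup}. For a fixed clopen $B\subseteq\mc G_2^{(0)}$, the rule $A\mapsto[A\times B]$ respects disjoint unions in the first coordinate and, for a compact open bisection $U\subseteq\mc G_1$, sends $[s(U)]$ and $[r(U)]$ to the source and range classes of the compact open bisection $U\times\mathrm{id}_B\subseteq\mc G_1\times\mc G_2$; hence it extends to a monoid homomorphism $\Psi_B\colon S(\mc G_1)\to S(\mc G_1\times\mc G_2)$. One then checks that $B\mapsto\Psi_B$ respects the defining relations of $S(\mc G_2)$ (disjoint unions, and the bisection relation via bisections $\mathrm{id}_A\times V$), so it extends to a monoid homomorphism $S(\mc G_2)\to\mathrm{Hom}(S(\mc G_1),S(\mc G_1\times\mc G_2))$, i.e.\ to a biadditive map, which factors through the tensor product to yield $\Psi$ with $\Psi([A]\otimes[B])=[A\times B]$.

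The map $\Phi$ rests on a ``grid'' lemma: since clopen rectangles form a basis of the product topology, every compact open $C\subseteq\mc G_1^{(0)}\times\mc G_2^{(0)}$ is a finite disjoint union $\bigsqcup_i A_i\times B_i$ of clopen rectangles, and any two such decompositions admit a common refinement of the form $\{P_k\times Q_l\}$, where $\{P_k\}$ and $\{Q_l\}$ are the atoms of the (finite) Boolean algebras generated by the first- and second-coordinate sides. Defining $\Phi([C]):=\sum_i[A_i]\otimes[B_i]$, well-definedness follows by comparing two decompositions through such a grid, using that $[A_i]=\sum_{P_k\subseteq A_i}[P_k]$ is a genuine disjoint decomposition together with bilinearity of $\otimes$; additivity over disjoint unions is then immediate. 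To see $\Phi$ respects the bisection relation, run the same grid argument on a compact open bisection $U\subseteq\mc G_1\times\mc G_2$: its first- and second-coordinate sides may be taken to be compact open bisections of $\mc G_1$, $\mc G_2$, the atoms of the generated Boolean algebras remain bisections (a subset of a bisection is a bisection), so $U$ is a finite disjoint union of rectangular bisections $P_k\times Q_l$. As the source and range maps of $\mc G_1\times\mc G_2$ act coordinatewise and are injective on $U$, this gives $\Phi([s(U)])=\sum[s(P_k)]\otimes[s(Q_l)]=\sum[r(P_k)]\otimes[r(Q_l)]=\Phi([r(U)])$ using the bisection relations in the two factors. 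Hence $\Phi$ descends to a monoid homomorphism on $S(\mc G_1\times\mc G_2)$ with $\Phi([A\times B])=[A]\otimes[B]$.

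It then remains to check $\Phi\Psi=\mathrm{id}$ and $\Psi\Phi=\mathrm{id}$, which hold on the generating sets: $\Phi\Psi([A]\otimes[B])=[A]\otimes[B]$, while $\Psi\Phi([C])=\sum_i[A_i\times B_i]=[C]$ for any rectangle decomposition of $C$. The only mildly delicate point is the bisection version of the grid lemma — ensuring the refinement pieces stay bisections — but as noted this is automatic. (Alternatively, one can skip constructing $\Psi$ and instead verify directly that $\Phi$ is bijective using the same grid decompositions.)
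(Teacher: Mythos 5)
Your proof is correct and follows essentially the same strategy as the paper: construct $\Psi$ by bilinearity and the universal property of the tensor product, then build the inverse at the level of compact open subsets and check it descends through the bisection relation by decomposing a compact open bisection of $\mc G_1 \times \mc G_2$ into a finite disjoint union of rectangular bisections. The only stylistic difference is that the paper packages your grid lemma inside the assertion of a natural isomorphism $C_0(\mc G_1^{(0)}\times \mc G_2^{(0)},\Z_+) \cong C_0(\mc G_1^{(0)},\Z_+) \otimes C_0(\mc G_2^{(0)},\Z_+)$, whereas you prove the required well-definedness by hand.
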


\begin{proof}
For compact open sets $A\subseteq \mc G_!^{(0)}$ and $B\subseteq\mc G_2^{(0)}$, we can see that the class of $[A\times B]$ in $S(\mc G_1\times \mc G_2)$ only depends on the classes $[A]$ and $[B]$ in $S(\mc G_1)$ and $S(\mc G_2)$ respectively.
Since $[A] \mapsto [A \otimes B]$ (with $B$ fixed) and $[B] \mapsto [A \otimes B]$ (with $A$ fixed) are homomorphisms, this gives rise to a well-defined homomorphism $\theta:S(\mathcal G_1) \otimes S(\mathcal G_2) \to S(\mathcal G_1 \times \mathcal G_2)$.

We will argue that its inverse of $\theta$ is well-defined, in order to show it is bijective.
As mentioned before Definition~\ref{def:TypeSemigroup}, $C_0(\mc G_1^{(0)},\mathbb Z_+)$ identifies with the universal abelian semigroup generated by all $\langle A\rangle$ where $A$ is a compact open subset of $\mc G_1^{(0)}$, subject to $\langle A\sqcup B \rangle = \langle A\rangle + \langle B \rangle$.
We have a natural isomorphism $C_0(\mc G_1^{(0)}\times \mc G_2^{(0)},\Z_+) \cong C_0(\mc G_1^{(0)},\Z_+) \otimes C_0(\mc G_2^{(0)},\Z_+)$, and this induces a homomorphism $\psi:C_0(\mc G_1^{(0)}\times \mc G_2^{(0)},\Z_+) \to S(\mc G_1) \otimes S(\mc G_2)$, taking  $\langle A\times B\rangle$ to $[ A] \otimes  [B]$.

To check that $\psi$ factors through $S(\mc G_1\times \mc G_2)$, we need to check that if $\gamma$ is a compact open bisection in $\mc G_1\times \mc G_2$ then $\psi(\langle s(\gamma)\rangle)=\psi(\langle r(\gamma)\rangle)$.
We can decompose $\gamma$ into a finite union of rectangles:
\begin{equation} \gamma = (\gamma_1^{(1)}\times \gamma_1^{(2)}) \sqcup \cdots \sqcup (\gamma_k^{(1)}\times \gamma_k^{(2)}), \end{equation}
where each $\gamma_i^{(j)}\subseteq \mc G_j$ is a compact open bisection.
Therefore,
\begin{equation}\begin{split}
    \psi(\langle s(\gamma)\rangle) 
    &= \sum_{i=1}^k \psi(\langle s(\gamma_i^{(1)}\times \gamma_i^{(2)})\rangle)  \\
    &= \sum_{i=1}^k \psi(\langle s(\gamma_i^{(1)}) \times s(\gamma_i^{(2)})\rangle)  \\
    &= \sum_{i=1}^k [s(\gamma_i^{(1)})] \otimes [s(\gamma_i^{(2)})]  \\
    &= \sum_{i=1}^k [r(\gamma_i^{(1)})] \otimes [r(\gamma_i^{(2)})]  \\
    &= \psi(\langle r(\gamma)\rangle),
\end{split} \end{equation}
as required, where in the last step we essentially do the first three steps in reverse.
\end{proof}

This allows us to characterize approximately inner flip in terms of the type semigroup.

\begin{corollary}\label{cor:aifS}
    Let $\mathcal R$ be an \'etale equivalence relation with compact metrizable totally disconnected unit space.
    Then $\mc R$ has approximately inner flip if and only if $x\otimes y=y\otimes x$ in $S(\mc R)\otimes S(\mc R)$, for all $x,y \in S(\mc R)$.
\end{corollary}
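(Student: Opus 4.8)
The plan is to derive this purely formally from the two results immediately preceding it. Recall that the proposition above stating that $\mc R$ has approximately inner flip if and only if $[A\times B]=[B\times A]$ in $S(\mc R\times\mc R)$ for all clopen $A,B\subseteq\mc R^{(0)}$, and that Proposition~\ref{prop:TensorS} provides an isomorphism $S(\mc R\times\mc R)\cong S(\mc R)\otimes S(\mc R)$ under which $[A\times B]$ corresponds to $[A]\otimes[B]$. Applying this isomorphism, the condition ``$[A\times B]=[B\times A]$ for all clopen $A,B$'' is equivalent to ``$[A]\otimes[B]=[B]\otimes[A]$ in $S(\mc R)\otimes S(\mc R)$ for all clopen $A,B$''.

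It then remains to promote the latter condition, stated on the generators $[A]$, to the statement $x\otimes y=y\otimes x$ for all $x,y\in S(\mc R)$. This is where the only (minor) work lies: by Definition~\ref{def:TypeSemigroup}, every element of $S(\mc R)$ is a finite sum of generators $[A]$ (with $A$ compact open, which here means clopen since $\mc R^{(0)}$ is compact), and the operation $(x,y)\mapsto x\otimes y$ is additive in each variable; so writing $x=\sum_i[A_i]$ and $y=\sum_j[B_j]$ gives $x\otimes y=\sum_{i,j}[A_i]\otimes[B_j]=\sum_{i,j}[B_j]\otimes[A_i]=y\otimes x$. The converse of this promotion is trivial (specialize to generators), so the two conditions are genuinely equivalent.

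Chaining these equivalences establishes the corollary. I do not expect any real obstacle: the content is entirely in Proposition~\ref{prop:TensorS} and the earlier characterization, and the argument here is just a bilinearity bookkeeping step together with the observation that Proposition~\ref{prop:TensorS} is an honest isomorphism, so that each condition transports in both directions.
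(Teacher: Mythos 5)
Your argument is correct and is exactly what the paper has in mind: the corollary is stated without proof precisely because it is the composition of the preceding proposition (characterizing approximately inner flip via $[A\times B]=[B\times A]$) with the isomorphism of Proposition~\ref{prop:TensorS}, together with the bilinearity bookkeeping you spell out. Your write-up simply makes the implicit generator-to-general-element step explicit.
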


\begin{example}
    Let $D$ be a non-cyclic subgroup of $\mathbb Q$.
    Let $E$ be a subgroup of $\mathbb Q$, such that whenever $\frac1r \in E$, we have that $D$ is $r$-divisible.
    Let $\mc R$ be an \'etale equivalence relation on a totally disconnected space given by our main construction, such that
    \begin{equation} S(\mc R) \cong \big((D_+\setminus 0) \oplus E/\Z\big) \cup \{(0,0)\} .\end{equation}
    Let us show that $\mc R$ has approximately inner flip, by showing that $S(\mc R)$ satisfies the condition in Corollary~\ref{cor:aifS} above.

    Consider nonzero elements $x,y \in S(\mc R)$, with $x=(x_1,x_2+\Z), y=(y_1,y_2+\Z)$, where $x_1,y_1\in D_+\setminus\{0\}$ and $x_2,y_2\in E$.
    Finding a common denominator for $x_2,y_2$, and then finding some sufficiently small $\hat c \in D_+$, we may write
    \begin{equation} x=(a,0)+p\Big(\hat c,\frac1r+\Z\Big),\quad x=(b,0)+q\Big(\hat c,\frac1r+\Z\Big), \end{equation}
    where $\frac1r \in E$, $\hat a,\hat b,\in D_+\setminus\{0\}$ and $p,q\in \Z_+$.
    Using common denominators again, we write $\hat a=\frac as$ and $\hat b=\frac bs$, where $\frac1s\in D$.
    We compute
    \begin{equation} \begin{split}
        \Big(\frac as,0\Big)\otimes \Big(\frac bs,0\Big) &= ab\Big(\frac1s,0\Big)\otimes \Big(\frac1s,0\Big), \\
        \Big(\frac as,0\Big)\otimes \Big(\hat c,\frac1r+\Z\Big) &= \Big(\frac a{sr},0\Big)\otimes \Big(r\hat c,\frac rr+\Z\Big) \\
        &= 0, \\
        \Big(\hat c,\frac1r\Big)\otimes \Big(\frac bs,0\Big) &= 0,\text{ similarly}.
    \end{split}\end{equation}
    Therefore,
    \begin{equation} \begin{split}
        x \otimes y = ab\Big(\frac1s,0\Big)\otimes \Big(\frac1s,0\Big) + pq\Big(\hat c, \frac1r+\Z\Big)\otimes \Big(\hat c, \frac1r+\Z\Big) = y \otimes x.
    \end{split}\end{equation}
\end{example}

\end{document}